\numberwithin{equation}{section}
\newtheorem{Thm}[equation]{Theorem}
\newtheorem{Prop}[equation]{Proposition}
\newtheorem{Cor}[equation]{Corollary}
\newtheorem{Lem}[equation]{Lemma}
\theoremstyle{definition}
\newtheorem{Def}[equation]{Definition}
\newtheorem{Rmk}[equation]{Remark}
\newcommand{\boi}{\boldsymbol{\iota}}
\newcommand{\bi}{\mathbf{i}}
\newcommand{\bj}{\mathbf{j}}
\begin{document}

\title [Representations of spin quiver Hecke algebras]
{Representations of spin quiver Hecke algebras \\for orthosymplectic Lie superalgebras}
\author[K. Christodoulopoulou]{Konstantina Christodoulopoulou}
\address{Department of
Mathematics, University of Florida, Gainesville, FL  32611-8105, U.S.A. }
\email{kchristod@ufl.edu}
\author[K.-H. Lee]{Kyu-Hwan Lee$^{\star}$}
\thanks{$^{\star}$This work was partially supported by a grant from the Simons Foundation (\#318706).}
\address{Department of
Mathematics, University of Connecticut, Storrs, CT 06269-3009, U.S.A.}
\email{khlee@math.uconn.edu}
\subjclass[2010]{Primary 17B37}
\begin{abstract}
We construct  all the irreducible representations of spin quiver Hecke algebras for  orthosymplectic Lie superalgebras $osp(1|2n),$ and show that their highest weights are given by the dominant words. We use the dominant Lyndon words to construct the cuspidal modules and  show that the irreducible representations are the simple  heads of standard representations constructed by induction from the cuspidal modules. 
\end{abstract}

\maketitle

\section*{Introduction}

Introduced by Khovanov and Lauda \cite{Khov2009} and independently by Rouquier \cite{Rouq2008}, the Khovanov-Lauda-Rouquier (KLR) algebras (also known as quiver Hecke algebras) have attracted much attention as these algebras categorify the lower (or upper) half of a quantum group.  More precisely, the Cartan datum associated with a Kac-Moody algebra $\mathfrak g$ gives rise to a KLR algebra $R$.  The category of finitely generated projective graded  modules of this algebra can be given a bialgebra structure by taking the Grothendieck group, and taking the induction and restriction functors as multiplication and co-multiplication. It turns out that this bialgebra is isomorphic to Lusztig's integral form of $U_q^{-}(\mathfrak g)$, and in this sense we say that the KLR algebra $R$ categorifies the negative part  $U_q^{-}(\mathfrak g)$ of the quantum group.

In the study of the category of representations, it is of fundamental interest to construct irreducible representations of $R$. In the paper \cite{KR}, Kleshchev and Ram defined a class of {\em cuspidal} representations for finite types, showed that every irreducible representation appears as the head of some induction of these cuspidal modules, and constructed almost all cuspidal representations. Hill, Melvin, and Mondragon in \cite{Hill2012} completed the construction of cuspidal representations in all finite types, and re-framed them in a more unified manner. Using a different approach, Benkart, Kang, Oh, and Park in \cite{Benk2011} also constructed irreducible representations utilizing a crystal structure on the isomorphism classes of irreducible representations of a KLR algebra obtained by 
Lauda and Vazirani in \cite{Lauda2011}. 

Along these developments, the case of  Kac--Moody superalgebras has been considered. 
As a foundational work in the superalgebra case, Kang, Kashiwara, and Tsuchioka generalized the KLR algebras to the spin quiver Hecke algebras \cite{KKT}. Subsequently, Hill and Wang \cite{HW} and Kang, Kashiwara, and Oh \cite{KKO13, KKO14} showed that the spin quiver Hecke algebras provide a categorification of  half of  quantum  Kac--Moody superalgebras. It is well known that a Kac-Moody superalgebra can be associated to a generalized Cartan matrix. The only finite-dimensional Kac--Moody superalgebras, which are not Lie algebras, are the orthosymplectic Lie superalgebras $osp(1|2n)$. Naturally, it is an important task to construct all the irreducible representations of the spin quiver Hecke algebras corresponding to $osp(1|2n)$.

In this paper, we construct all the irreducible representations of spin quiver Hecke algebras for orthosymplectic Lie superalgebras $osp(1|2n)$. Our method is similar to that of Kleshchev and Ram \cite{KR} and is based on the work of Clark, Hill, and Wang \cite{CHW} on quantum shuffles and dominant Lyndon words. Both of these papers are closely related in the work of Leclerc \cite{Lec}. We present an explicit construction of cuspidal representations in Proposition \ref{prop-cusp} and use the cuspidal representations as building blocks to obtain other irreducible representations. In this process, the computation of the leading coefficients of canonical basis elements is crucial and requires a careful analysis of signs and degrees for the corresponding representations of the spin quiver Hecke algebra in categorification. With cuspidal representations at hand,  we construct standard representations through induction from cuspidal representations,  and show that they have irreducible heads. Finally, as the main result  (Theorem \ref{thm-main}) of this paper, we prove that these irreducible heads form a complete set of irreducible representations of the spin quiver Hecke algebra for $osp(1|2n)$. 

With the results of this paper, we can consider some future directions. First, as in \cite{BKM2012}, one can use a general convex order to construct standard representations and study their homological properties. Next, one can obtain a concrete crystal structure on the category of representations of $osp(1|2n)$, following \cite{Lauda2011} and \cite{KKO14}. We hope that these directions may be pursed in the near future.

The outline of this paper is as follows. In Section \ref{quantum}, we fix notations for $osp(1|2n)$, consider quantum shuffle products and combinatorics of Lyndon words, and recall  the construction of the canonical basis. In Section \ref{spin-quiver}, spin quiver Hecke algebras are introduced and properties of their representations are presented. The next section is devoted to the construction of cuspidal representations of the spin quiver Hecke algebras. In the last section, we construct standard representations and obtain all the irreducible representations.

\subsection*{Acknowledgments} 
We thank Se-jin Oh for useful comments.

%\medskip

\section{Quantum superalgebras and canonical bases} \label{quantum}

\subsection{Root data}

Let $I=I_{\bar 0} \cup I_{\bar 1}$ be a $\mathbb Z/2 \mathbb Z$-graded finite set of size $n$, and let $p:I \rightarrow \{ \bar 0, \bar 1\}$ be the corresponding parity function.  We assume that $I_{\bar 1} \neq \emptyset$. Consider a generalized Cartan matrix $A=(a_{ij})_{i,j \in I}$ such that (C1) $a_{ii}=2$ for each $i \in I$; (C2) $a_{ij} \in \mathbb Z_{\le 0}$ for $i \neq j$; (C3) $a_{ij} =0$ if and only if $a_{ji}=0$; (C4) $a_{ij} \in 2\mathbb Z$ for $i\in I_{\bar 1}$ and $j \in I$.
We assume that the matrix $A$ is symmetrizable, i.e. there exists an invertible matrix $D=\mathrm{diag}(s_1, \ldots , s_n)$ with $DA$ symmetric. Furthermore, we choose $D$ such that $s_i \in \mathbb Z_{>0}$ and $\mathrm{gcd}(s_1, \ldots , s_n)=1$, and  assume that 
the integer $s_i$ is odd if and only if $i \in I_{\bar 1}$. 

In this paper, we will be primarily interested in the following case:  the
index set $I=\{ 1, 2,  \ldots , n \}$ with $I_{\bar 1} = \{ n \}$,
\begin{equation} \label{eqn-ma} \scriptsize{A= \begin{pmatrix} 2 & -1 & 0
&\cdots &  0 & 0 &0  \\ -1 & 2 & -1 &  \cdots & 0 &0&0  \\ 0 & -1 & 2 &
\cdots & 0 &0&0 \\  \vdots & \vdots & \vdots & \vdots & \vdots & \vdots &
\vdots \\ 0& 0 &0  & \cdots &  -1 & 2 & -1  \\ 0&0&0 & \cdots &  0 & -2 & 2
\end{pmatrix} },  \end{equation}  and $D= (2, 2, \ldots , 2, 1)$. Throughout this paper, we let
$\mathfrak g$ be the Kac-Moody superalgebra  associated to  a symmetrizable
generalized Cartan matrix $A$ as in (\ref{eqn-ma}) and let $U_q(\mathfrak g)$ be the corresponding
quantized enveloping superalgebra defined as in \cite{BKM}. The generators of
$\mathfrak g$ will be denoted by $e_i, f_i$ and $h_i$ $(i \in I)$. The
subalgebra of $U_q(\mathfrak g)$ generated by the elements $e_i$ ($i \in I$)
will be denoted by $U^+_q$.  Let $\widetilde \Phi=\widetilde \Phi_{\bar 0} \cup \widetilde \Phi_{\bar 1}$ be
the root system for $\mathfrak g$ and let \[ \Phi=\Phi_{\bar 0} \cup \Phi_{\bar 1}=\{ \beta \in \widetilde \Phi \ |\  \tfrac 1 2 \beta  \notin \widetilde \Phi \} \] be the reduced root system for $\mathfrak g$, where $\Phi_s=\Phi \cap \widetilde \Phi_s$ for $s \in \{ \bar 0, \bar 1 \}$.
Denote the set of simple roots by $\Pi =\Pi_{\bar 0}
\cup \Pi_{\bar 1}=\{\alpha_i | i \in I \}$ and the set of positive roots by $\widetilde \Phi^+$. Then we put $\Phi^+=\Phi \cap \widetilde \Phi^+$. We also have 
the corresponding sets $\widetilde \Phi_{\bar 0}^+, \Phi_{\bar 0}^+$ (resp. $\widetilde \Phi_{\bar 1}^+, \Phi_{\bar 1}^+$)  of positive even (resp. odd) roots.
For example, when $n=2$, we have $I_{\bar 1}= \{ 2 \}$ and 
\[ \begin{array}{ll} \widetilde \Phi^+ =\{ \alpha_1,\ \alpha_2,\ \alpha_1+\alpha_2,\ \alpha_1 +2 \alpha_2,\ 2 \alpha_2 ,\ 2\alpha_1+2 \alpha_2 \}, & \Phi^+ =\{ \alpha_1,\ \alpha_2,\ \alpha_1+\alpha_2,\ \alpha_1 +2 \alpha_2 \}, \\ \widetilde \Phi^+_{\bar 0} =\{ \alpha_1,\  \alpha_1 +2 \alpha_2,\ 2 \alpha_2 ,\ 2\alpha_1+2 \alpha_2 \}, & \Phi^+_{\bar 0} =\{ \alpha_1,\ \alpha_1 +2 \alpha_2 \}, \\ \widetilde \Phi^+_{\bar 1} = \Phi^+_{\bar 1} =\{ \alpha_2,\ \alpha_1+\alpha_2 \}. & \end{array} \]

The $\mathbb Z$-lattice spanned by $\Pi$
is denoted by $Q$. We define $p(\alpha_i)=p(i)$, $i \in I$,  and extend it to
the additive monoid $Q^+:= \sum_i \mathbb Z_{\ge 0} \alpha_i$. Define a
symmetric bilinear form $(\cdot, \cdot):Q \times Q \longrightarrow \mathbb Z$
by $(\alpha_i,\alpha_j)=b_{ij}$, where $B=DA=(b_{ij})$.

\subsection{Quantum shuffle superalgebras}

Let $\mathcal W$ be the set of words on the alphabet $I$ with the empty word $\emptyset$. An element $\mathbf i \in \mathcal W$ will be denoted by \[ \mathbf i = (i_1, i_2, \dots, i_d)=i_1i_2\dots i_d.\]   
Define $|\mathbf i |=|(i_1, \dots, i_d)|=\alpha_{i_1}+ \cdots + \alpha_{i_d} \in Q^+$ and $p(\mathbf i) =p(|\mathbf i|)$ for $\mathbf i \in \mathcal W$. The length of $\bi$ will be denoted by $\ell(\bi)$, i.e. $\ell(i_1, i_2, \dots , i_d)=d$. For $\alpha \in Q^+$, set $\mathcal W_\alpha=\{ \mathbf i \in \mathcal W \, | \, | \mathbf i | = \alpha \}$. 
Let $\mathcal F$ be the free associative superalgebra over $\mathbb Q(q)$ generated by $I$, where $q$ is an indeterminate. Note that $\mathcal F$ has a weight decomposition $\displaystyle\mathcal F=\bigoplus_{\alpha\in Q^+}\mathcal F_{\alpha},$ where $\mathcal F_{\alpha}=\mathcal F\cap \mathcal W_{\alpha}.$ The set $\mathcal W$ is naturally considered as a $\mathbb Q(q)$-linear basis of $\mathcal F$. 

We define the quantum shuffle product $\diamond:\mathcal F \times \mathcal F \longrightarrow \mathcal F$ inductively by
\begin{equation} \label{ind} (xi)\diamond (yj)=(x \diamond (yj))i +(-1)^{p(xi)p(j)}q^{-(|xi|,|j|)}((xi)\diamond y)j \end{equation}
for $x, y \in \mathcal W$ and $i,j \in I$ and by extending it linearly, where we set $\emptyset \diamond x=x \diamond \emptyset=x$ for $x \in \mathcal W$.

% \[ (i_1, \dots , i_a) \diamond (i_{a+1}, \dots , i_{a+b}) = \sum_{\sigma} (-1)^{\varepsilon (\sigma)} q^{-e(\sigma)} (i_{\sigma(1)}, \dots , i_{\sigma(a+b)}), \text{\blue{\rm (Need to check)}} \] where the sum is over minimal coset representatives in $S_{a+b}/S_a \times S_b$,
% \[ \varepsilon(\sigma)=\sum_{\substack{r \le a <s \\ \sigma(r)<\sigma(s)}} p(i_{\sigma(r)})p(i_{\sigma(s)}), \quad \text{ and } \quad e(\sigma)=\sum_{\substack{r \le a <s \\ \sigma(r)<\sigma(s)}}(\alpha_{i_{\sigma(r)}}, \alpha_{i_{\sigma(s)}}). \]
% In particular, we have ,

\begin{Prop} \cite[Corollary 3.4]{CHW}
There exists an algebra embedding $\Psi:U^+_q \longrightarrow (\mathcal F, \diamond)$ such that $\Psi(e_i) =i$.
\end{Prop}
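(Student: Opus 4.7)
The plan is to establish the proposition in two stages: first, that $\Psi$ extends to a well-defined algebra homomorphism, and second, that it is injective. The overall strategy follows the Rosso--Leclerc quantum shuffle approach, adapted to the super setting as in \cite{CHW}.

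For well-definedness, recall that $U_q^+$ is presented as the free associative superalgebra on $\{e_i : i \in I\}$ modulo the (super) quantum Serre relations. Setting $\Psi(e_i)=i$ and extending multiplicatively with respect to $\diamond$ yields a homomorphism provided that the letters $i \in \mathcal F$ satisfy these same relations. The verification is an explicit computation: one expands the iterated shuffle $\underbrace{i \diamond \cdots \diamond i}_{m} \diamond j \diamond \underbrace{i \diamond \cdots \diamond i}_{1-a_{ij}-m}$ as a sum over words of length $2-a_{ij}$, collects the $q$-power and parity-sign prefactors produced by repeated application of \eqref{ind}, and reduces the Serre identity to a standard $q$-binomial identity. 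The sign factor $(-1)^{p(xi)p(j)}$ in \eqref{ind} is precisely what is needed to produce the super Serre relations at the odd index $n$.

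For injectivity, the natural tool is a non-degenerate bilinear pairing. Equip $\mathcal F$ with the Hopf superalgebra structure whose product is $\diamond$ and whose coproduct is a parity-signed deconcatenation; dually, equip the free tensor algebra $T$ on $\{e_i\}$ with concatenation product and shuffle coproduct. These become mutually dual Hopf superalgebras under the evaluation pairing $\langle \bi, \bj \rangle = \delta_{\bi,\bj}$. One then identifies the canonical surjection $T \twoheadrightarrow U_q^+$ with the quotient of $T$ by the radical of a symmetric bilinear form constructed from the root datum $(A,D)$, and observes that $\Psi$ is (a restriction of) the transpose of this surjection. Injectivity of $\Psi$ therefore reduces to the non-degeneracy of the induced form on $U_q^+$, which is the super analogue of Kashiwara's bilinear form and is non-degenerate on each weight space $U_q^+(\alpha)$.

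The main obstacle is the sign and grading bookkeeping throughout: the deconcatenation coproduct on $\mathcal F$, the Hopf pairing, and the Hopf axioms must all be twisted by parity signs in a mutually compatible way, and one must verify that these twists conspire to reproduce the super Serre relations and the inductive rule \eqref{ind}. This is precisely the technical content of \cite[Corollary 3.4]{CHW}, whose framework our argument would invoke to bypass a direct verification in the super case.
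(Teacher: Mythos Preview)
The paper does not supply its own proof of this proposition: it is stated purely as a citation of \cite[Corollary 3.4]{CHW}, with no argument given. Your proposal is an accurate high-level sketch of the Rosso--Leclerc method as carried out in \cite{CHW}, so in that sense it agrees with the cited source; there is simply nothing in the present paper to compare against. The one caveat is that your write-up remains a strategy outline rather than a proof: the Serre-relation shuffle computation and the compatibility of the parity twists with the Hopf pairing are asserted rather than performed, and you explicitly defer the technical verification back to \cite{CHW}. That is appropriate here, since the paper itself treats the result as imported, but it means your text functions as an explanation of why the cited result is true rather than an independent proof.
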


Define $\mathcal U = \Psi(U^+_q)$ to be the subalgebra of $(\mathcal F, \diamond)$ generated by $I$. The algebra $\mathcal U$ is $Q^+$-graded with $\mathcal U_{\alpha}=\mathcal U\cap \mathcal F_{\alpha}.$
We define the shuffle product on $\mathcal F \otimes \mathcal F$ by 
\[ (w \otimes x) \diamond (y \otimes z) = (-1)^{p(x)p(y)} q^{-(|x|,|y|)}(w \diamond y) \otimes (x \diamond z) \quad \text{ for } x,y,z,w \in \mathcal W, \] and
define the map $\Delta: \mathcal F  \longrightarrow \mathcal F \otimes \mathcal F$ by 
\[ \Delta(i_1, \dots, i_d) = \sum_{0 \le k \le d} (i_{k+1}, \dots , i_d) \otimes (i_1, \dots, i_k).  \]
\begin{Prop}\cite[Proposition 3.13]{CHW} \label{prop-bi}
There exists a nondegenerate symmetric bilinear form
\[ (\cdot , \cdot) : \mathcal U \times \mathcal U \longrightarrow \mathbb Q(q) \] that satisfies the following properties:
\begin{enumerate}
\item $(1,1)=1$;
\item $(i,j)=\delta_{ij}$ for $i,j \in I$;
\item $(x, y \diamond z) = (\Delta(x), y \otimes z)$ for $x, y, z \in \mathcal U$, 
where the induced bilinear form is given by
\[ (x \otimes x', y \otimes y') := (x, y)(x',y') .\]
\end{enumerate}
\end{Prop}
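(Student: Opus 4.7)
The plan is to construct the form first on the free algebra $\mathcal F$ using combinatorics of words, verify the three axioms (together with symmetry) there, and then restrict to $\mathcal U \subset \mathcal F$; the main obstacle is then to establish nondegeneracy of the restriction.

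First, I would define a bilinear form on $\mathcal F$ by declaring the basis of words to be orthonormal: $(\mathbf{i}, \mathbf{j}) := \delta_{\mathbf{i}, \mathbf{j}}$ for $\mathbf{i}, \mathbf{j} \in \mathcal W$, extended $\mathbb Q(q)$-bilinearly. Properties (1) and (2) are then immediate, and symmetry is manifest. An equivalent, more intrinsic way is to take (1)--(3) as an inductive recipe: the recursion $(\mathbf{i}, y \diamond z) = (\Delta(\mathbf{i}), y \otimes z)$, together with $(1,1)=1$ and $(i,j)=\delta_{ij}$, determines the form uniquely on $\mathcal F$, since any word $j_1 \cdots j_d$ equals the iterated shuffle $j_1 \diamond \cdots \diamond j_d$ up to a triangular correction by strictly earlier words in a chosen term order.

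The core computation is verifying property (3) on basis words. Fix $\mathbf{i}$, $y$, $z$ and compute both sides. The left side extracts the coefficient of $\mathbf{i}$ in the shuffle $y \diamond z$, which by unwinding (\ref{ind}) is a sum over the $(\ell(y),\ell(z))$-shuffles producing $\mathbf{i}$, each weighted by an explicit product of super signs and powers of $q$ arising from the inversions created by the shuffle. The right side
\[ (\Delta(\mathbf{i}), y \otimes z) = \sum_{0 \le k \le \ell(\mathbf{i})} (i_{k+1}\cdots i_d, y)(i_1 \cdots i_k, z) \]
collapses, by orthonormality of words, to the single term with $k = \ell(z)$, $z = i_1 \cdots i_k$ and $y = i_{k+1} \cdots i_d$, corresponding to the trivial shuffle that concatenates $z$ before $y$. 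The remaining nontrivial shuffles are matched to terms produced by iterated application of (\ref{ind}); this is a combinatorial bookkeeping exercise carried out by induction on $\ell(y) + \ell(z)$, and the super signs and $q$-twists propagate correctly because both sides obey the same inductive rule (\ref{ind}). Symmetry is automatic from the symmetric definition on basis vectors, and the form clearly restricts to $\mathcal U$ to give a symmetric form satisfying (1)--(3).

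Finally, nondegeneracy on $\mathcal U$ is the genuinely nontrivial point and will be the main obstacle: on $\mathcal F$ itself the radical of the form is enormous, since every non-shuffle word pairs trivially against $\mathcal U$, so nondegeneracy must genuinely exploit the embedding $\Psi : U^+_q \to \mathcal F$. The strategy is to transport the question to $U^+_q$: the Kac--Lusztig-style bilinear form on $U^+_q$ for a symmetrizable quantum Kac--Moody superalgebra, normalized by $(e_i,e_j)=\delta_{ij}$ and compatible with the twisted coproduct, is known to be nondegenerate. Matching generators and coproducts through $\Psi$ identifies this form with the restriction of $(\cdot,\cdot)$ from $\mathcal F$, so nondegeneracy on $\mathcal U$ is inherited. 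The most delicate piece throughout is keeping every super sign consistent under this identification, and ensuring that the parity condition on $i \in I_{\bar 1}$ is respected in both the shuffle recursion and the twisted coproduct on $U^+_q$.
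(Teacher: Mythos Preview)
The paper does not prove this proposition; it is quoted from \cite{CHW} without argument. So there is no ``paper's own proof'' to compare against, and your proposal must stand on its own.

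It does not. The central claim---that the form making the word basis of $\mathcal F$ orthonormal satisfies property~(3)---is false, and the sketch you give for it is incoherent. Take distinct letters $i\neq j$. With words orthonormal, the right-hand side is
\[
(\Delta(ij),\, i\otimes j)=\sum_{k=0}^{2}\bigl((ij)_{>k},\,i\bigr)\bigl((ij)_{\le k},\,j\bigr)=(j,i)(i,j)=0,
\]
whereas the left-hand side $(ij,\, i\diamond j)$ is the coefficient of $ij$ in $i\diamond j=ji+(-1)^{p(i)p(j)}q^{-(\alpha_i,\alpha_j)}ij$, namely $(-1)^{p(i)p(j)}q^{-(\alpha_i,\alpha_j)}\neq 0$. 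Even for a repeated letter the discrepancy persists: $i\diamond i=(1+(-1)^{p(i)}q^{-(\alpha_i,\alpha_i)})\,ii$, so $(ii,\,i\diamond i)=1+(-1)^{p(i)}q^{-(\alpha_i,\alpha_i)}$ under your form, while $(\Delta(ii),\,i\otimes i)=1$. Your own paragraph flags the problem: you compute that the right side ``collapses\ldots to the single term'' corresponding to the trivial shuffle, yet the left side has all the nontrivial shuffles too; the sentence about ``matching the remaining nontrivial shuffles'' has nothing on the right to match them to.

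The correct construction does \emph{not} start from an orthonormal word basis. One either defines the form recursively via (1)--(3) using the iterated coproduct (so that $(\mathbf i,\mathbf j)$ for words of length $d$ is the evaluation of $\Delta^{(d-1)}(\mathbf i)$ against $j_1\otimes\cdots\otimes j_d$, which is very far from $\delta_{\mathbf i,\mathbf j}$), or one transports the Lusztig/Kashiwara form on $U_q^+$ through $\Psi$ and checks compatibility with $\diamond$ and $\Delta$. Your final paragraph on nondegeneracy via transport to $U_q^+$ is the right idea, but it presupposes that the form you built actually coincides with the pushed-forward Lusztig form, and with the wrong form in hand that identification fails.
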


In the following proposition, we recall some linear maps on $\mathcal F,$ which give rise to important (anti-)automorphisms on $\mathcal U.$ For $\nu = \sum_i c_i \alpha_i \in Q^+$, let 
\begin{equation}\label{eqn-definition of N and P}
 N(\nu) = \frac 1 2 ( (\nu, \nu) - \sum_{i\in I} c_i (\alpha_i, \alpha_i) )\quad \text{and} \quad P(\nu)=\frac 1 2(p(\nu)^2-\sum_{i\in I}c_ip(\alpha_{i})),\end{equation}
 where $p(\alpha_i)\in\{0,1\}$ and $\displaystyle\sum_{i\in I}c_ip(\alpha_{i})$ are interpreted as integers. For any $\bi\in \mathcal W,$ we set $P(\bi)=P(|\bi|).$ 

\begin{Prop}[{\cite[Proposition 3.10]{CHW}}] \label{prop-linear maps}\hfill
\begin{enumerate}
\item Let $\tau: \mathcal F\to\mathcal F$ be the $\mathbb{Q}(q)$-linear map defined by \[\tau(i_1,\ldots,i_d)=(i_d,\ldots,i_1).\] Then $\tau(x\diamond y)=\tau(y)\diamond \tau(x)$ for all $x,y\in\mathcal F.$
\item Let $\overline{\phantom{L}}: \mathcal F \to \mathcal F$ be the  the $\mathbb Q$-linear map defined by $\overline{q}=-q^{-1}$ and \[\overline{(i_1,\ldots,i_d)}=(-1)^{\sum_{s<t}p(i_s)p(i_t)}q^{-\sum_{s<t} (\alpha_{i_s},\alpha_{i_t})}(i_d,\ldots,i_1).\] Then  $\overline{x\diamond y}=\overline{x}\diamond \overline{y}$ for all $x,y\in\mathcal F.$
\item Let $\sigma: \mathcal F\to\mathcal F$ be the $\mathbb Q$-linear map defined by $\sigma(q)=-q^{-1}$ and \[\sigma(i_1,\ldots,i_d)=(-1)^{\sum_{s<t}p(i_s)p(i_t)}q^{-\sum _{s<t}(\alpha_{i_s},\alpha_{i_t})}(i_1,\ldots,i_d).\]  Then $\sigma(x)=\overline{\tau(x)}$,  $\sigma(x\diamond y)=\sigma(y)\diamond \sigma(x)$ for all $x,y\in\mathcal F,$ and  \[\sigma(\bi)=(-1)^{P(\bi)}q^{-N(|\bi|)}\bi  \; \text{ for all}\; \bi\in \mathcal W .\]

\end{enumerate}
\end{Prop}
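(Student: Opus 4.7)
The plan is to prove (1), then (2), and derive (3) formally. Part (1) is a combinatorial identity for the shuffle, (2) reduces via a scalar computation to a quasi-commutativity identity for the shuffle, and (3) is a formal consequence of the first two.

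For Part~(1), I would induct on $\ell(x)+\ell(y)$. Writing $x = x'i$, $y = y'j$ and applying the right-recursion (\ref{ind}) inside $\tau(x \diamond y)$, the two resulting summands match the corresponding summands of $\tau(y) \diamond \tau(x) = (j\tau(y')) \diamond (i\tau(x'))$ once one also has a ``left-recursive'' form of the shuffle product in which the first rather than the last letter is peeled off. This auxiliary identity can be derived by a parallel induction from (\ref{ind}). Equivalently, one can realize $\bi \diamond \bj$ as the usual sum over $(m,k)$-shuffles in $S_{m+k}$, in which case reversal of permutations provides a sign- and $q$-weight-preserving bijection to shuffles of $(\tau(\bj), \tau(\bi))$ and (1) is immediate.

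For Part~(2), the key observation is that although $\overline{\phantom{L}}$ is only $\mathbb Q$-linear, on a single word it acts as a scalar-times-reversal:
\[
\overline{\bi} \;=\; (-1)^{P(|\bi|)} q^{-N(|\bi|)} \tau(\bi),
\]
because $\sum_{s<t} p(i_s)p(i_t) = \binom{k}{2} = P(|\bi|)$ (with $k$ the number of odd letters in $\bi$) and $\sum_{s<t}(\alpha_{i_s},\alpha_{i_t}) = N(|\bi|)$ both depend only on the weight. Let $\iota$ denote the $\mathbb Q$-linear scalar conjugation $q \mapsto -q^{-1}$ that fixes every word; then for $f$ homogeneous of weight $\nu$, $\overline{f} = (-1)^{P(\nu)} q^{-N(\nu)} \tau(\iota(f))$. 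Using the elementary identities
\[
P(\nu_1+\nu_2) - P(\nu_1) - P(\nu_2) = p(\nu_1)p(\nu_2), \qquad N(\nu_1+\nu_2) - N(\nu_1) - N(\nu_2) = (\nu_1,\nu_2),
\]
together with (1), the desired equality $\overline{\bi \diamond \bj} = \overline{\bi} \diamond \overline{\bj}$ reduces to the quasi-commutativity identity
\[
\iota(\bi \diamond \bj) \;=\; (-1)^{p(|\bi|)p(|\bj|)} q^{(|\bi|,|\bj|)}\, \bj \diamond \bi,
\]
which I would prove by induction on total length using (\ref{ind}). Here the condition (C4) together with the symmetrization forces $(\alpha_i,\alpha_j) \in 2\mathbb Z$ for all $i,j \in I$, so spurious signs of the form $(-1)^{(\nu_1,\nu_2)}$ that appear when $\iota$ acts on the $q$-powers in (\ref{ind}) collapse to $1$ and the induction closes.

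Part~(3) is then immediate. The explicit formula follows from $\sigma(\bi) = \overline{\tau(\bi)} = (-1)^{P(|\bi|)} q^{-N(|\bi|)}\bi$, using that $P$ and $N$ are weight invariants and $|\tau(\bi)|=|\bi|$, while the anti-homomorphism property is the formal chain
\[
\sigma(x \diamond y) = \overline{\tau(x \diamond y)} = \overline{\tau(y) \diamond \tau(x)} = \overline{\tau(y)} \diamond \overline{\tau(x)} = \sigma(y) \diamond \sigma(x),
\]
combining (1) and (2). The hardest step is the quasi-commutativity identity in~(2): the conceptual content is natural but the inductive verification requires simultaneously tracking three layers of corrections (from $\iota$ on $q$-powers, from superparity, and from the Cartan pairing) and checking that they cancel precisely by virtue of $(\alpha_i,\alpha_j) \in 2\mathbb Z$.
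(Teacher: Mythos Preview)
The paper does not give a proof of this proposition; it is quoted verbatim from \cite[Proposition~3.10]{CHW} and used as a black box. So there is nothing to compare against here.

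Your proposed argument is correct and well organized. The reduction of (2) to the quasi-commutativity identity
\[
\iota(\bi \diamond \bj) \;=\; (-1)^{p(|\bi|)p(|\bj|)} q^{(|\bi|,|\bj|)}\,(\bj \diamond \bi)
\]
is valid, and the inductive verification goes through exactly as you indicate: applying $\iota$ to the recursion~\eqref{ind} produces an extra sign $(-1)^{(|xi|,\alpha_j)}$, which is indeed $1$ since every $(\alpha_r,\alpha_s)$ is even (for $r\in I_{\bar 0}$ because $s_r$ is even, for $r\in I_{\bar 1}$ because $a_{rs}$ is even by~(C4)). The formal derivation of~(3) from~(1) and~(2) is clean.

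One small imprecision: you write $\sum_{s<t}p(i_s)p(i_t)=\binom{k}{2}=P(|\bi|)$. The first equality holds as integers, but the second only holds modulo~$2$ (e.g.\ for $k=3$ one has $\binom{3}{2}=3$ while $P=-1$). Since $P$ only ever appears as an exponent of $(-1)$ this is harmless, but strictly speaking the identity to record is $\sum_{s<t}p(i_s)p(i_t)\equiv P(|\bi|)\pmod 2$.
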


\begin{Rmk} Since $\displaystyle\sum_{s<t}(\alpha_{i_s},\alpha_{i_t})\in2\mathbb{Z},$ it is easy to check from the definition that $\sigma^2=\mathrm{Id}_{\mathcal F}.$ 
\end{Rmk}

The following lemma will be useful later.

\begin{Lem}\label{lem-N(nu)} Let $\mu,\nu\in Q^+$ and $n\in\mathbb Z_{>0}.$ We have the following properties
\begin{enumerate}
\item $N(\nu)\in2\mathbb{Z}$  for all $\nu\in Q^+;$
\item $N(\mu+\nu)=N(\mu)+N(\nu)+(\mu,\nu);$
%\item $N(n\mu)=nN(\mu)+\frac{n(n-1)}{2}(\mu,\mu).$
\item $P(\mu+\nu)=P(\mu)+P(\nu)+p(\mu)p(\nu)$.
%\item $P(n\mu)=nP(\mu)+\frac{n(n-1)}{2}p(\mu)$
\end{enumerate}
\end{Lem}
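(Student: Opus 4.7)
The plan is to derive each of (1)--(3) directly from the definitions in (\ref{eqn-definition of N and P}). Items (2) and (3) are essentially (bi)linearity identities that I would dispose of first; the real content sits in (1), where the hypothesis on the parity of $s_i$ and axiom (C4) must be invoked.

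For (2), I would expand $(\mu+\nu,\mu+\nu) = (\mu,\mu) + 2(\mu,\nu) + (\nu,\nu)$ by bilinearity of $(\cdot,\cdot)$ and observe that $\sum_i (c_i+d_i)(\alpha_i,\alpha_i)$ splits additively across $\mu$ and $\nu$; halving everything yields $N(\mu+\nu)=N(\mu)+N(\nu)+(\mu,\nu)$.

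For (3), I would interpret $p(\nu)$ as the non-negative integer $\sum_i c_i p(\alpha_i)$, in the spirit of the parenthetical clarification that follows the defining formula. Under this reading $P(\nu) = \tfrac{1}{2}(p(\nu)^2 - p(\nu)) = \binom{p(\nu)}{2}$, and $p$ is additive on $Q^+$. The elementary identity $\binom{a+b}{2} = \binom{a}{2} + \binom{b}{2} + ab$ with $a = p(\mu)$ and $b = p(\nu)$ then delivers (3) at once.

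For (1), I would first rewrite
\[ N(\nu) = \sum_i \binom{c_i}{2}\, b_{ii} + \sum_{i < j} c_i c_j\, b_{ij}, \]
by expanding $(\nu,\nu) = \sum_i c_i^2 b_{ii} + 2\sum_{i<j} c_i c_j b_{ij}$ and absorbing the linear correction $\sum_i c_i b_{ii}$ into the diagonal contributions. Since $b_{ii} = 2 s_i$, each diagonal term is automatically even. The main (modest) obstacle is showing $b_{ij} \in 2\mathbb Z$ for every pair $i \neq j$, and this is where the axioms on the Cartan datum come in. I would argue by cases on $s_i$: if $i \in I_{\bar 0}$ then $s_i$ is even (by the standing hypothesis that $s_i$ is odd exactly when $i \in I_{\bar 1}$), so $b_{ij} = s_i a_{ij} \in 2\mathbb Z$; if $i \in I_{\bar 1}$ then (C4) forces $a_{ij} \in 2\mathbb Z$, so $b_{ij} = s_i a_{ij} \in 2\mathbb Z$ in this case too. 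Each cross term is therefore even, and (1) follows.
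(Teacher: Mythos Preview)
Your argument is correct and follows essentially the same route as the paper's proof: both reduce (2) and (3) to direct expansion from the definitions, and both reduce (1) to the fact that $(\alpha_i,\alpha_j)=b_{ij}\in 2\mathbb Z$ for all $i,j\in I$. Your treatment is slightly more explicit than the paper's in that you actually justify $b_{ij}\in 2\mathbb Z$ via the parity hypothesis on $s_i$ and axiom (C4), whereas the paper simply asserts it; and your decomposition $N(\nu)=\sum_i\binom{c_i}{2}b_{ii}+\sum_{i<j}c_ic_jb_{ij}$ is a repackaging of the paper's $N(\alpha_{i_1}+\cdots+\alpha_{i_k})=\sum_{s<t}(\alpha_{i_s},\alpha_{i_t})$.
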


\begin{proof}

It is is easy to see from  (\ref{eqn-definition of N and P}) that $\displaystyle N(\alpha_{i_1}+\ldots+\alpha_{i_{k}})=\sum_{1\leq s<t\leq k}(\alpha_{i_s},\alpha_{i_t}).$ Now  statement (1)  follows from the fact that  $(\alpha_i,\alpha_{j})\in 2\mathbb{Z}$ for all $i,j\in I.$ The equalities (2) and (3) follow from (\ref{eqn-definition of N and P}) by straightforward computations. 
\end{proof}

\subsection{Dominant words and Lyndon words}

Fix a total ordering $\prec$ on $I$ to be $1 \prec 2 \prec \dots \prec n$, and put the induced lexicographic ordering $\prec$ on $\mathcal W$. A word $\mathbf i \in \mathcal W$ is called {\em dominant} if $\mathbf i =\max(u)$ for some $u \in \mathcal U$. Denote the set of dominant words by $\mathcal W^+$, and define $\mathcal W^+_\alpha = \mathcal W^+ \cap \mathcal W_\alpha$. 
A word $\mathbf i =(i_1, \dots , i_d) \in \mathcal W$ is called {\em Lyndon} if it is smaller than any of its proper right factors. Let $\mathcal L$ be the set of Lyndon words in $\mathcal W$, and let $\mathcal L^+$ be the set of dominant Lyndon words in $\mathcal W$. Recall that every word $\mathbf i \in \mathcal W$ has a {\em canonical factorization} as a product of non-increasing Lyndon words:
\[ \mathbf i = \mathbf i_1 \cdots \mathbf i_d, \quad \mathbf i _1, \dots, \mathbf i _d \in \mathcal L, \quad  \mathbf i_1 \succeq \cdots \succeq \mathbf i_d .\]

\begin{Thm}\cite[Theorem 4.8]{CHW} \label{thm-Lyndon basis}\hfill
\begin{enumerate}
\item The map $\mathbf i \mapsto | \mathbf i |$ is a bijection from $\mathcal L^+$ to $\Phi^+$. Given $\beta \in\Phi^{+},$ we  write $\boi^{+}(\beta)$ for the pre-image of $\beta$ under this bijection.  

\item Assume that  $\mathbf i =\mathbf i_1 \cdots \mathbf i_d$ is the canonical factorization. Then  $\mathbf i \in \mathcal W^+$ if and only if
$\mathbf i_s \in \mathcal L^+$ for each $s =1,2, \dots , d$. 
\end{enumerate}
\end{Thm}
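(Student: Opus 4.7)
The plan is to follow Leclerc's classical strategy, adapted to the super setting: develop a quantum shuffle leading-term lemma, construct distinguished PBW-type elements of $\mathcal U$ indexed by Lyndon words, and then compare cardinalities via a PBW dimension count. The central combinatorial input is the following ``shuffle leading-term lemma'': for Lyndon words $\bi \prec \bj$, the shuffle $\bi \diamond \bj$ has maximal word (in the lexicographic order) equal to the concatenation $\bi\bj$, with a nonzero coefficient whose sign and $q$-degree are tracked by the functions $P$ and $N$ of \eqref{eqn-definition of N and P}. I would prove this by induction on $\ell(\bi)+\ell(\bj)$, unwinding the recursion \eqref{ind} and applying the additivity formulas of Lemma \ref{lem-N(nu)} to isolate the surviving leading term.

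For part (1), I would construct, by induction on $\ell(\bi)$, a distinguished element $r(\bi) \in \mathcal U$ for every $\bi \in \mathcal L$, setting $r(i) := i$ at length one and, at the standard Lyndon factorization $\bi = \bi_L \bi_R$ (where $\bi_R$ is the longest proper Lyndon suffix), defining $r(\bi)$ as a suitable super $q$-commutator of $r(\bi_L)$ and $r(\bi_R)$ inside $(\mathcal U,\diamond)$. The shuffle leading-term lemma guarantees $\max(r(\bi))=\bi$ whenever $r(\bi)\neq 0$, and a standard argument shows that $\bi \in \mathcal L^+$ if and only if $r(\bi) \neq 0$. Nonvanishing is then tied to $|\bi| \in \Phi^+$ by a case-by-case induction on $\ell(\bi)$ exploiting the explicit shape of the Cartan data in \eqref{eqn-ma}: one checks that the super $q$-commutator survives precisely when $|\bi_L|+|\bi_R| \in \Phi^+$, using Lemma \ref{lem-N(nu)} together with the transparent list of positive roots for $osp(1|2n)$ (the $n=2$ case in the excerpt generalizes directly). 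This produces the bijection $\mathcal L^+ \to \Phi^+$, $\bi \mapsto |\bi|$, with inverse $\beta \mapsto \boi^+(\beta)$.

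For part (2), the ``$\Leftarrow$'' direction follows by iterating the shuffle leading-term lemma on $r(\bi_1) \diamond \cdots \diamond r(\bi_d)$ when $\bi_1 \succeq \cdots \succeq \bi_d$ are all in $\mathcal L^+$: the leading word of the iterated shuffle equals $\bi_1 \bi_2 \cdots \bi_d = \bi$, whence $\bi \in \mathcal W^+$. The ``$\Rightarrow$'' direction is a cardinality argument: the nondegeneracy of the bilinear form in Proposition \ref{prop-bi} forces $\max$ to separate a basis of $\mathcal U_\alpha$, so $|\mathcal W^+_\alpha| = \dim \mathcal U_\alpha = \dim (U_q^+)_\alpha$, and a PBW theorem for $U_q(osp(1|2n))$ identifies this last number with the Kostant partition number $K(\alpha)$ counting multisets in $\Phi^+$ summing to $\alpha$. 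By part (1) just proved, this coincides with the number of non-increasing factorizations $\bi = \bi_1 \cdots \bi_d$ with $\bi_s \in \mathcal L^+$, and the ``$\Leftarrow$'' direction exhibits each such concatenation as a distinct element of $\mathcal W^+_\alpha$; equality of cardinalities then forces every dominant word to be of this form.

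The principal obstacle is the sign and $q$-degree bookkeeping specific to the super setting: the coefficient of $\bi\bj$ in $\bi \diamond \bj$ carries an unavoidable $(-1)^{P(\cdot)} q^{-N(\cdot)}$ factor, and the super correction terms in the $q$-commutator defining $r(\bi)$ must be chosen compatibly so that $r(\bi) \neq 0 \iff |\bi| \in \Phi^+$ holds uniformly across both parities. A closely related subtlety is ensuring that Lyndon candidates of non-reduced weight (most conspicuously $(n,n)$, whose weight $2\alpha_n$ lies in $\widetilde\Phi^+$ but not in $\Phi^+$) correctly fail to be dominant; this is where the oddness of the index $n$ and condition (C4) on the Cartan data genuinely enter, through cancellations produced by the signs in the super-shuffle formula \eqref{ind}.
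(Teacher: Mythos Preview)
The paper does not prove this theorem; it is quoted as \cite[Theorem 4.8]{CHW} without argument. There is therefore no proof in the present paper to compare your proposal against. Your overall strategy is indeed the one carried out in \cite{CHW}, which adapts Leclerc's argument \cite{Lec} to the super setting.

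However, your central ``shuffle leading-term lemma'' is misstated, and the error propagates. For Lyndon $\bi \prec \bj$ the maximal word in $\bi \diamond \bj$ is $\bj\bi$, not $\bi\bj$: already with $\bi=(1)$, $\bj=(1,2)$ the shuffles produce $(1,1,2)$ and $(1,2,1)$, and $(1,2,1)\succ(1,1,2)$. The correct form (cf.\ Lemma \ref{lem-top coeff special} later in the paper, or \cite[Lemma 4.5]{CHW}) is that for $\bi \in \mathcal L$ and $\bj$ with $\bi \succeq \bj$ one has $\max(\bi \diamond \bj)=\max(\bj\diamond\bi)=\bi\bj$. This matters for your construction of $r(\bi)$: at the standard factorization $\bi=\bi_L\bi_R$ with $\bi_L\prec\bi_R$, \emph{both} $r(\bi_L)\diamond r(\bi_R)$ and $r(\bi_R)\diamond r(\bi_L)$ have leading word $\bi_R\bi_L$, not $\bi$. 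The whole point of the super $q$-commutator, with its specific coefficient, is to cancel this common leading term so that $\bi_L\bi_R=\bi$ emerges as the new maximum. Your version of the lemma would make the commutator unnecessary and short-circuits exactly the step where the delicate sign and $q$-degree bookkeeping you flag as the ``principal obstacle'' actually occurs. With the corrected lemma and the cancellation mechanism restored, the remainder of your outline (the triangularity, the cardinality match via the PBW count, and the treatment of non-reduced weights such as $2\alpha_n$) aligns with the published argument.
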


The set of dominant Lyndon words was computed in the work of Clark, Hill and Wang: 

\begin{Prop}\cite[Proposition 6.5]{CHW} \label{pro-lyn}
The set of dominant Lyndon words for $\mathfrak g$ is given by
\[\mathcal L^+ = \{ (i, \dots, j) | 1 \le i \le j \le n\} \cup \{(i, \dots , n, n, \dots, j)| 1 \le i < j \le  n \}. \]
\end{Prop}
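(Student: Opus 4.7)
The plan is to combine Theorem \ref{thm-Lyndon basis}(1), which gives a weight-preserving bijection $|\cdot|:\mathcal L^+\to\Phi^+$, with an explicit enumeration of $\Phi^+$. Since $|\mathcal L^+|=|\Phi^+|$, it suffices to exhibit $|\Phi^+|$ pairwise distinct dominant Lyndon words whose weights fill out $\Phi^+$; the listed set will then be forced to coincide with $\mathcal L^+$.

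In the orthogonal realization with $\alpha_i=\epsilon_i-\epsilon_{i+1}$ for $i<n$ and $\alpha_n=\epsilon_n$, the reduced positive roots of $osp(1|2n)$ are
\[
\Phi^+=\{\epsilon_i-\epsilon_j,\ \epsilon_i+\epsilon_j\mid 1\le i<j\le n\}\cup\{\epsilon_i\mid 1\le i\le n\},
\]
so $|\Phi^+|=n^2$. A direct computation gives $\epsilon_i-\epsilon_{j+1}=\alpha_i+\cdots+\alpha_j$ (for $i\le j<n$), $\epsilon_i=\alpha_i+\cdots+\alpha_n$, and $\epsilon_i+\epsilon_j=\alpha_i+\cdots+\alpha_{j-1}+2(\alpha_j+\cdots+\alpha_n)$ (for $i<j\le n$); these are precisely the weights of the listed words $(i,\ldots,j)$ and $(i,\ldots,n,n,\ldots,j)$. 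The listed set has $\binom{n+1}{2}+\binom{n}{2}=n^2$ elements, matching $|\Phi^+|$. A lex check shows each listed word is Lyndon: for $(i,\ldots,j)$ every proper right factor begins with some letter in $\{i+1,\ldots,j\}$, strictly greater than $i$; and for $(i,\ldots,n,n,\ldots,j)$ every proper right factor begins with a letter in $\{i+1,\ldots,n\}\cup\{j,\ldots,n-1\}$, again all $>i$.

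The main step is to show each listed word is dominant. I would proceed by induction on length, factoring $\boi=\boi_1\boi_2$ by peeling off the final letter of $\boi$: a direct check shows that for every listed $\boi$ the truncation $\boi_1$ obtained by deleting the last letter is again a Lyndon word in our list (namely $(i,\ldots,j-1)$ in the increasing case, or $(i,\ldots,n,n,\ldots,j+1)$ in the mixed case, or $(i,\ldots,n)$ when $j=n$), so the induction closes within the list. Assuming $r_{\boi_1}\in\mathcal U$ with $\max(r_{\boi_1})=\boi_1$ is constructed (with base case $r_{(i)}=i\in\mathcal U$), define the super $q$-commutator
\[
r_{\boi}:=r_{\boi_1}\diamond r_{\boi_2}-(-1)^{p(\boi_1)p(\boi_2)}q^{(|\boi_1|,|\boi_2|)}\,r_{\boi_2}\diamond r_{\boi_1}\in\mathcal U.
\]
Expanding via \eqref{ind}, the two summands share the same monomials but with coefficients differing by the super $q$-scalar; the scalar is tailored so that the lex-leading interleavings cancel, leaving $\boi_1\boi_2=\boi$ as the new lex-maximum with nonzero leading coefficient. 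Thus $\boi\in\mathcal W^+$, and applying Theorem \ref{thm-Lyndon basis}(1) forces the listed set to exhaust $\mathcal L^+$.

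The main obstacle is the sign-and-$q$ bookkeeping needed to guarantee that the coefficient of $\boi$ in $r_{\boi}$ does not vanish, particularly for the mixed-type words $(i,\ldots,n,n,\ldots,j)$, where the parity $p(\boi_1)$ toggles as the repeated $n$ is crossed and the super-commutator alternates between a $q$-commutator and a $q$-anticommutator. Here Lemma \ref{lem-N(nu)}(1), which ensures $(\alpha_i,\alpha_j)\in 2\mathbb Z$, together with the parity assumption $I_{\bar 1}=\{n\}$, forces the relevant $q$-symmetrizer to remain a nonzero element of $\mathbb Q(q)$ at each inductive step. Verifying this nondegeneracy uniformly across both families of listed words is the delicate computational point of the argument.
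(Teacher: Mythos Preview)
The paper does not give its own proof of this proposition; it simply cites \cite[Proposition 6.5]{CHW}. So there is no ``paper's proof'' to compare against, and I will evaluate your argument on its own.

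Your overall strategy is sound: invoking Theorem~\ref{thm-Lyndon basis}(1), enumerating $\Phi^+$, and checking that the listed words are Lyndon with the correct weights reduces the problem to showing each listed word is dominant. The Lyndon and weight checks are correct.

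The gap is in the dominance step. You write that after forming the $q$-commutator $r_{\boi}=r_{\boi_1}\diamond j-(-1)^{p(\boi_1)p(j)}q^{(|\boi_1|,\alpha_j)}\,j\diamond r_{\boi_1}$, ``the lex-leading interleavings cancel, leaving $\boi_1\boi_2=\boi$ as the new lex-maximum.'' But $\boi=\boi_1 j$ is the \emph{smallest} shuffle of $\boi_1$ and $j$, not the second largest: for $\boi=(i,\dots,j)$ there are $j-i$ shuffles strictly above $\boi$, namely all the words obtained by inserting $j$ at an earlier position. A single subtraction does not obviously kill all of them. What actually happens (and what your examples would confirm) is that for these particular words \emph{every} intermediate shuffle cancels, so that $r_{\boi}$ is in fact a scalar multiple of the word $\boi$ itself. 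The mechanism is not the one you describe: using the anti-automorphism $\sigma$ of Proposition~\ref{prop-linear maps} one checks that the coefficient of each shuffle $\mathbf k$ in $r_{\boi}$ is $a_{\mathbf k}-\sigma(a_{\mathbf k})$, where $a_{\mathbf k}$ is its coefficient in $\boi_1\diamond j$; the intermediate coefficients turn out to be $\sigma$-invariant (they are integers or of the form $q^2+q^{-2}$, reflecting that $(\alpha_j,\alpha_k)=0$ for $|j-k|>1$), while the coefficient of $\boi$ is a pure power of $q$ and survives. This is exactly the ``delicate computational point'' you flag, but your sketch mislocates where the difficulty lies and what makes it work. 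To complete the argument you should either carry out this $\sigma$-based computation explicitly, or switch to the costandard factorization $\boi=(i)\cdot(i+1,\dots)$, for which the analogous cancellation is more transparent.
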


\begin{Rmk}
As a related result, a basis for $\mathfrak g$ arising from Lyndon words was obtained by Bokut, Kang, Lee and Malcomson in \cite{BKLM}. 
\end{Rmk}

The following corollary is similar to \cite[Lemma 5.9]{KR} and  slightly generalizes  \cite[Corollary 4.17]{CHW} in our context. 

\begin{Cor}\label{cor-Lyndon word power}
Let $\beta\in\Phi^+$ and $m\in\mathbb{Z}_{\geq0}.$ Then $\boi^+(\beta)^m$ is the smallest dominant word in $\mathcal W_{m\beta}.$ 
\end{Cor}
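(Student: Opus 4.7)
The plan is to prove both that $\boi^+(\beta)^m$ is dominant and that it is the minimum among dominant words of weight $m\beta$. Write $\mathbf{L}:=\boi^+(\beta)$. Dominance is immediate: $\mathbf{L}\cdot\mathbf{L}\cdots\mathbf{L}$ ($m$ copies) is a non-increasing product of Lyndon words with $\mathbf{L}\in\mathcal{L}^+$, so it is the canonical factorization of $\mathbf{L}^m$, and Theorem \ref{thm-Lyndon basis}(2) yields dominance.

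For minimality I would induct on $m$, the case $m=0$ being trivial. Fix any dominant $\bi\in\mathcal{W}_{m\beta}$ with canonical factorization $\bi=\bi_1\cdots\bi_d$, where each $\bi_s\in\mathcal{L}^+$ by Theorem \ref{thm-Lyndon basis}(2). Three cases arise. First, if $\bi_1=\mathbf{L}$, then $\bi_2\cdots\bi_d$ is a dominant word of weight $(m-1)\beta$, so by induction $\bi_2\cdots\bi_d\succeq\mathbf{L}^{m-1}$ and hence $\bi\succeq\mathbf{L}^m$. Second, if $\bi_1\succ\mathbf{L}$, then either $\bi_1$ and $\mathbf{L}$ differ at some position internal to both (whence $\bi\succ\mathbf{L}^m$ at that position), or $\mathbf{L}$ is a proper prefix of $\bi_1$; using Proposition \ref{pro-lyn} one checks, for every admissible combination of types of $\mathbf{L}$ and $\bi_1$, that the letter of $\bi_1$ immediately after the prefix $\mathbf{L}$ is strictly greater than $i:=\min\mathrm{supp}(\beta)$ (the first letter of $\mathbf{L}$), whereas $\mathbf{L}^m$ repeats $i$ at that very position, again yielding $\bi\succ\mathbf{L}^m$.

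The main work is excluding the remaining case $\bi_1\prec\mathbf{L}$. Since every letter of $\bi$ lies in $\mathrm{supp}(\beta)$ with minimum $i$, the chain $\bi_s\preceq\bi_1\prec\mathbf{L}$ forces each $\bi_s$ to begin with $i$. By Proposition \ref{pro-lyn}, every $\bi_s$ is then of type A, $(i,i+1,\ldots,k_s)$, or of type B, $(i,i+1,\ldots,n,n,\ldots,k_s)$. I would now split on the type of $\mathbf{L}$. For $\mathbf{L}=(i,\ldots,j)$ type A, the hypothesis $\bi_1\prec\mathbf{L}$ forces $\bi_1$ to be type A with $k_1<j$ (a type B $\bi_1$ would have $\mathbf{L}$ as a proper prefix and hence exceed $\mathbf{L}$); then every $\bi_s$ is type A with $k_s\le k_1$, and letter $j$ appears $0$ times in $\bi$ instead of the required $m$ times. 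For $\mathbf{L}=(i,\ldots,n,n,\ldots,j)$ type B, $\bi_1\prec\mathbf{L}$ allows either $\bi_1$ type A or $\bi_1$ type B with $k_1>j$; both cases are ruled out by comparing the counts of the letters $i$, $j$, and $n$ in $\bi$ against their prescribed counts $m$, $2m$, $2m$ in $m\beta$, ending in inequalities such as $2m\le m$ or $d\le 0$.

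The chief obstacle is this last case analysis: each combination of types of $\mathbf{L}$ and $\bi_1$ must be dispatched by translating the lex relation $\bi_s\preceq\bi_1$ into constraints on the parameters $k_s$, and then exhibiting an incompatibility with the weight identity $\sum_s|\bi_s|=m\beta$. The bookkeeping is tedious but wholly combinatorial, relying only on the explicit shape of $\mathcal{L}^+$ recorded in Proposition \ref{pro-lyn}.
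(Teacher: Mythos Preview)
Your proof is correct and, at its core, follows the same strategy as the paper: reduce to the situation where all Lyndon factors of the competing dominant word are strictly below $\mathbf L=\boi^+(\beta)$, then use the explicit list $\mathcal L^+$ from Proposition~\ref{pro-lyn} together with coefficient-counting in $m\beta$ to reach a contradiction. The only real difference is organizational. The paper assumes directly that a dominant $\bj\prec\mathbf L^m$ exists and invokes a standard fact about canonical factorizations (citing Mel\c{a}ncon) to write $\bj=\mathbf L^{k-1}\bj_k\cdots\bj_s$ with $\bj_k\prec\mathbf L$; this absorbs your Cases~1 and~2 in one stroke, leaving only your Case~3 analysis (which the paper carries out in the same way, splitting on the type of $\mathbf L$). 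Your induction on $m$ and the explicit handling of $\bi_1\succ\mathbf L$ make the argument more self-contained, at the price of a bit of extra casework.
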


\begin{proof}
 Let $\bi=\boi^+(\beta),$ and let $\bj$ be a dominant word of weight $m\beta$ such that $\bj\prec \bi^m.$ We show that this is impossible by  checking the different cases for $\bi\in\mathcal L^+.$ Let $\bj=\bj_1\bj_2\ldots\bj_s$ be the canonical factorization of $\bj,$ where $\bj_1,\bj_2,\ldots,\bj_s\in\mathcal L^+$ and $\bj_1\succeq\bj_2\succeq	\ldots\succeq\bj_s.$  Since $\bj\prec\bi^m,$  there exists $k$ such that $\bj_r=\bi$ for $r<k$ and $\bj_{k}\prec\bi.$ (See \cite{Melancon1992}.)    Clearly $k\leq m.$ Suppose $\bj=\bi^{k-1}\bj_{k}\ldots\bj_s,$ where $\bi\succ\bj_{k}\succeq\ldots\succeq\bj_s.$  Set $\gamma_s=|\bj_s|$ for all $s.$ By assumption $\gamma_1+\ldots+\gamma_s=m\beta.$ Recall that $\mathcal L^+ = \{ (i, \dots, j) | 1 \le i \le j \le n\} \cup \{(i, \dots , n, n, \dots, j)| 1 \le i < j \le  n \}.$ Assume that $\bi=(i, \dots, j)$  for some $1\leq i\leq j\leq n.$  Thus $m\beta=m\alpha_i+\ldots+m\alpha_j.$ Since $\bj_r\prec\bi$ for  $k\leq r\leq s,$ and the coefficient of $\alpha_{\ell}$ for $\ell<i$ in  $m\beta$ is zero,   it follows that $\bj_r\in\{(i,\ldots,t)| i \leq  t\leq j-1\}$ for all $k\leq r\leq s,$ and from the coefficient of $\alpha_i$ in $m\beta$ we conclude that $s=m.$ But then  the coefficient of $\alpha_j$ in $\gamma_1+\ldots+\gamma_s$ will be $k-1<m$, which is a contradiction. Next, suppose that $\bi=(i, \dots, n,n)$  for some $1\leq i\leq n-1.$  Then  $m\beta=m\alpha_i+\ldots+2m\alpha_n.$ By a similar argument  it follows that $\bj_r\in\{(i,\ldots,t)| i \leq t\leq n\}$ for $k\leq r\leq s,$ $s=m,$ and the coefficient of $\alpha_n$ in $\gamma_1+\ldots+\gamma_s$ is  $2(k-1)<2m$, which is again contradiction. Finally, assume that $\bi=(i, \dots, n,n,\ldots,j)$  for some $1\leq i<j \leq n-1.$  Then  $m\beta=m\alpha_i+\ldots+m\alpha_{j-1}+2m\alpha_j+\ldots+2m\alpha_n.$ Similarly as above it follows that $\bj_r\in\{(i,\ldots,t)| i\leq t\leq n\}\cup\{(i,\ldots,n,n,\dots, t)| i<t\leq j+1\},$ for $k\leq r\leq s,$ $s=m$ and the coefficient of $\alpha_j$ in $\gamma_1+\ldots+\gamma_s$ will be at most $2(k-1)+m-k+1=m+k-1<2m$, which is another contradiction.
\end{proof}

\subsection{Maximal elements in shuffle products}

Let $\mathcal A=\mathbb Z[q,q^{-1}]$.
For $\mathbf i\in\mathcal L^+$ set $q_{\bi}:=q^{\frac{(|\bi|,|\bi|)}{2}},$ and define \begin{equation}\label{eq-quantum numbers}[m]_{\mathbf i}= \begin{cases} \displaystyle{\frac {q_{\bi}^{m}-q_{\bi}^{-m}}{q_{\bi}-q_{\bi}^{-1}} } & \text{ if } p(\mathbf i)=\bar 0, \\ \displaystyle{ \frac {(-q_{\bi})^{m}-q_{\bi}^{-m}}{-q_{\bi}-q_{\bi}^{-1}}} & \text{ if } p(\mathbf i)=\bar 1
\end{cases}  
\quad \text{and}\quad  [m]_{\bi}!=[m]_{\bi}[m-1]_{\bi}\ldots[1]_{\bi}.
\end{equation} 
In particular, $[2]_n=-(q-q^{-1})$,
\[ q_{\mathbf i}= \begin{cases} \displaystyle{q^2} & \text{ if } p(\mathbf i)=\bar 0, \\ \displaystyle{ q} & \text{ if } p(\mathbf i)=\bar 1,
\end{cases}
\quad \text{and} \quad [m]_{\mathbf i}= \begin{cases} \displaystyle{\frac {q^{2m}-q^{-2m}}{q^2-q^{-2}} } & \text{ if } p(\mathbf i)=\bar 0, \\ \displaystyle{ \frac {(-q)^{m}-q^{-m}}{-q-q^{-1}}} & \text{ if } p(\mathbf i)=\bar 1.
\end{cases}
\]

The  following lemma follows as in \cite[Lemma 5.1]{KR}.

\begin{Lem}\label{lem-shuffle product order}Let $w,w',\ell,g\in\mathcal W$ with $|w|=|\ell|,$ $|w'|=|g|,$ $w\preceq \ell$ and $w'\preceq g.$ Then $\max(w\diamond w')\preceq \max (l\diamond g).$
Moreover, if $w\prec \ell$ or $w'\prec g,$ then $\max(w\diamond w')\prec \max (l\diamond g).$
 \end{Lem}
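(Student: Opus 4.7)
The plan is to adapt the argument of \cite[Lemma 5.1]{KR}: first identify $\max(u\diamond v)$ with a purely combinatorially defined maximum shuffle word, and then check monotonicity by a coupling.

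From the recursion (\ref{ind}), $u\diamond v\in\mathcal F$ is a $\mathbb Q(q)$-linear combination of all shuffles (interleavings) of the sequences $u$ and $v$; hence $\max(u\diamond v)\preceq M(u,v)$ is automatic, where $M(u,v)$ denotes the lex-largest such interleaving word. I would prove the reverse inequality --- that the coefficient of $M(u,v)$ in $u\diamond v$ is non-zero --- by induction on $\ell(u)+\ell(v)$, unpacking (\ref{ind}) and tracking the leading word through the recursion. This is the main technical obstacle: when $u$ and $v$ share letters, several distinct interleavings may produce the same word $M(u,v)$, so one has to verify that the signed $q$-twists in (\ref{ind}) do not conspire to cancel among those contributions. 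A careful bookkeeping of the parities $(-1)^{p(xi)p(j)}$ and of the pairings $(|xi|,|j|)$ entering the recursion should show that no such cancellation occurs, yielding $\max(u\diamond v)=M(u,v)$.

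With the identification $\max=M$ in hand, the remaining step is purely combinatorial. Assume $|w|=|\ell|$ and $w\preceq \ell$; note that this forces $\ell(w)=\ell(\ell)$. Fix an interleaving $\pi^{*}$ achieving $M(w,w')$, and apply the same positional scheme to $\ell, w'$: place the letters of $\ell$, in order, at the positions $\pi^{*}$ assigns to $w$-letters, and leave the $w'$-letters where they are. The resulting word $S$ is a shuffle of $\ell$ and $w'$, so $S\preceq M(\ell,w')$. Now $M(w,w')$ and $S$ agree on every position occupied by a $w'$-letter, whereas on the $w$/$\ell$-positions they read off $w_{1}\cdots w_{a}$ and $\ell_{1}\cdots \ell_{a}$ respectively. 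If $w=\ell$ the two words coincide; if $w\prec \ell$, then at the first index $k$ with $w_{k}\ne \ell_{k}$ one has $w_{k}<\ell_{k}$ while the two words agree strictly before the position $\pi^{*}$ assigns to $w_{k}$, so $M(w,w')\prec S\preceq M(\ell,w')$. A symmetric argument yields monotonicity in the second slot, and chaining gives $M(w,w')\preceq M(\ell,w')\preceq M(\ell,g)$ with strict inequality as soon as $w\prec \ell$ or $w'\prec g$. Translating back via $\max=M$ completes the proof.
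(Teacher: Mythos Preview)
The paper gives no proof of its own, simply noting that the lemma ``follows as in \cite[Lemma~5.1]{KR}''; so you are in effect reconstructing what that citation should mean in the super setting. Your overall plan --- reduce to the identity $\max(u\diamond v)=M(u,v)$ with $M$ the lex-largest shuffle word, then prove monotonicity of $M$ by the positional coupling --- is exactly the natural shape, and your coupling argument for the monotonicity of $M$ is correct.

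The genuine gap is precisely where you flag it. In \cite{KR} the equality $\max(u\diamond v)=M(u,v)$ is immediate: each interleaving contributes a single monomial in~$q$ with coefficient $+1$, and a sum of such monomials cannot vanish. Here the parity twist $(-1)^{p(xi)p(j)}$ in \eqref{ind} makes cancellation a real possibility, and you do not actually carry out the ``careful bookkeeping'' you promise. In particular, the induction on $\ell(u)+\ell(v)$ you propose does not obviously close when the leading letters of $u$ and $v$ coincide: then both branches of (the left-handed version of) the recursion can contribute to the coefficient of $M(u,v)$, and ruling out cancellation between those two contributions is exactly the point at issue, not something the inductive hypothesis hands you. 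Already for $u=v=(1,2)$ with $n=2$ one computes $(1,2)\diamond(1,2)=(1-q^{-2})(1,2,1,2)+\cdots$, where the leading coefficient arises as $1+(-q^{-2})$ from two distinct interleavings carrying opposite signs --- harmless here, but showing that the signed contributions genuinely interact. As written, this step is a plan rather than a proof; you would need to supply an argument (for instance, by analyzing how an adjacent transposition of equal letters between the two factors simultaneously shifts the $q$-degree by $\pm(\alpha_i,\alpha_i)=\pm 2s_i$ and the sign by $(-1)^{p(i)}$, and exploiting $s_i\equiv p(i)\pmod 2$) that terms of equal $q$-degree carry equal sign.
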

 
% \begin{proof} The lemma follows from the fact that any shuffle (i.e. an individual term) of the words $w,w'$ appearing in the shuffle product $w\diamond w'$ is less than or equal to the same shuffle applied to the words $\ell,g,$ and strictly less if  $w\prec \ell$ and $w'\prec g.$
 %\end{proof}

%For $\mathbf i\in\mathcal L$ set $q_{\bi}:=q^{\frac{(|\bi|,|\bi|)}{2}}.$ In particular, 

%\[ q_{\mathbf i}= \begin{cases} \displaystyle{q^2} & \text{ if } p(\mathbf i)=\bar 0, \\ \displaystyle{ q} & \text{ if } p(\mathbf i)=\bar 1. 
%\end{cases}
%\]

%\blue{Recall the definition of $\sigma$ from \cite[Proposition 3.10]{CHW}  here.}
 
%\begin{Lem}
%Let $\bi\in\mathcal L$ and $m\in\mathbb Z_{>0}.$ We have $\sigma([m]_{\bi})=[m]_{\bi}$
%\end{Lem}

The next result generalizes   \cite[Lemma 4.5]{CHW} and will be useful in computing leading coefficients in quantum shuffle products for canonical factorizations.

\begin{Lem}\label{lem-top coeff special} Assume that $\bi\in\mathcal L,$  $\bj\in\mathcal W^+,$ $\bi\succeq \bj$ and $n\in\mathbb Z_{>0}.$ Then $\mathrm{max}(\bi^{n}\diamond \bj)=\mathrm{max}(\bj\diamond \bi^n)=\bi^{n}\bj.$ Moreover, 
\begin{enumerate}
\item If $\bi\succ\bj,$ then  the coefficient of $\bi^{n}\bj$ in $\bi^{n}\diamond \bj$ is   $(-1)^{p(n\bi)p(\bj)}q^{-(n|\bi|,|\bj|)}.$ 
\item If $\bi\succ\bj,$ then  the coefficient of $\bi^{n}\bj$ in $\bj\diamond \bi^{n}$ is $1.$
\item The coefficient of $\bi^{n+1}$ in  $\bi^{n}\diamond \bi$ is $(-1)^{p(n\bi)}q_{\bi}^{-n}[n+1]_{\bi}.$
\end{enumerate}
\end{Lem}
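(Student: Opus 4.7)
The plan is to establish the maximum first, then derive parts (1) and (2) by iterating the recursion (\ref{ind}), and finally tackle (3) by induction on $n$.

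For the maximum $\max(\bi^n \diamond \bj)=\max(\bj \diamond \bi^n)=\bi^n \bj$, I would induct on $\ell(\bj)$, using (\ref{ind}) to peel off the last letter of the target at each step. Two structural observations drive the induction: first, since $\bi \in \mathcal L$, $\bj \in \mathcal W^+$, and $\bi \succeq \bj$, the first letters satisfy $i_1 \succeq j_1$ (because the first Lyndon factor $\bj_1$ of $\bj$ is a prefix of $\bj$, so lex comparison of $\bi \succeq \bj \succeq \bj_1$ forces $i_1 \succeq j_1$); second, inspecting Proposition \ref{pro-lyn} shows that removing the last letter of any word in $\mathcal L^+$ yields another element of $\mathcal L^+$, so $\bj$ with its last letter stripped remains dominant. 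Together with (\ref{ind}), these identify $\bi^n\bj$ as the maximum on both sides. Parts (1) and (2) then follow by iterating (\ref{ind}). For (2), the target ends in the last letter of $\bj$, which belongs to the left input, so each step takes the first term of (\ref{ind}) (coefficient $1$); iterating leaves $\emptyset \diamond \bi^n = \bi^n$ with coefficient $1$. For (1), $\bj$ is the right input, so each step takes the second term, picking up $(-1)^{p(\bi^n) p(\alpha_{j_t})} q^{-(|\bi^n|,\alpha_{j_t})}$ for each letter $j_t$ of $\bj$; these factors telescope by additivity of parity and bilinearity of $(\cdot,\cdot)$ to $(-1)^{p(n\bi) p(\bj)} q^{-(n|\bi|,|\bj|)}$, matching the claim.

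The main obstacle is (3), where several shuffles of $\bi^n$ with $\bi$ produce the same target $\bi^{n+1}$ (the two inputs share letters), which is the source of the quantum integer $[n+1]_\bi$. I would induct on $n$. Applying (\ref{ind}) to $\bi^n \diamond \bi$ gives
\[
\bi^n \diamond \bi = \bigl(\bi^{n-1}Y \diamond \bi\bigr) i_\ell + (-1)^{np(\bi)p(\alpha_{i_\ell})} q^{-n(|\bi|,\alpha_{i_\ell})}\bigl(\bi^n \diamond Y\bigr) i_\ell,
\]
where $Y=(i_1,\ldots,i_{\ell-1})$. From Proposition \ref{pro-lyn} one checks that $Y \in \mathcal L^+$ and $Y \prec \bi$, so part (1) applies to $\bi^n \diamond Y$ and evaluates the coefficient of $\bi^n Y$ there as $(-1)^{p(n\bi)p(Y)} q^{-(n|\bi|,|Y|)}$; combining this with the prefactor and using $|\bi|=|Y|+\alpha_{i_\ell}$ and $p(\bi)=p(Y)+p(\alpha_{i_\ell})$, the second-term contribution simplifies to $(-1)^{np(\bi)} q_\bi^{-2n}$. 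Iterating (\ref{ind}) on the first-term subproblem $\bi^{n-1}Y \diamond \bi$ (invoking parts (1) and (2) on the smaller shuffles that arise) reduces its contribution to $a_{n-1}$, the coefficient of $\bi^n$ in $\bi^{n-1}\diamond\bi$. Letting $a_n$ denote the coefficient sought in (3), this yields the recursion $a_n = a_{n-1} + (-1)^{np(\bi)} q_\bi^{-2n}$, whose telescoping solution $\sum_{k=0}^{n}(-1)^{kp(\bi)} q_\bi^{-2k}$ matches $(-1)^{p(n\bi)}q_\bi^{-n}[n+1]_{\bi}$ after direct rearrangement in each parity case $p(\bi)\in\{\bar 0,\bar 1\}$.

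The delicate step is the first-term reduction in (3). For most $\bi\in\mathcal L^+$ the letters $i_{\ell-1}$ and $i_\ell$ differ, so iterating (\ref{ind}) strips $Y$ off the target and the left input with no extra sign contributions, cleanly producing $a_{n-1}$. For the subfamily $\bi=(i,\ldots,n,n)$ of Proposition \ref{pro-lyn}, however, the equality $i_{\ell-1}=i_\ell=n$ creates additional recursion branches, and one must verify directly (e.g.\ by checking that the relevant sub-coefficients of the extra shuffles vanish by a direct enumeration of valid interleavings) that those extra branches contribute zero, so that the reduction to $a_{n-1}$ still holds.
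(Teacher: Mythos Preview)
Your plan has the right ingredients but a recurring gap: when you iterate \eqref{ind}, at each step you keep ``the relevant term'' and discard the other, but you never justify why the discarded term contributes nothing. Concretely, in $\bj\diamond\bi^n=(\bj'\diamond\bi^n)j_k+(\text{sign})(\bj\diamond\bi^{n-1}Y)i_\ell$, both summands end in the same letter whenever $j_k=i_\ell$ (e.g.\ $\bi=(1,2,3,3,2)$, $\bj=(1,2)$), so the second summand could in principle contribute to the coefficient of $\bi^n\bj$; likewise for the first summand in (1). Your induction on $\ell(\bj)$ gives no control over these ``wrong'' terms, because they involve $\bi^{n-1}Y$ rather than $\bi^n$. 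The paper closes this gap differently: it first proves $\max(\bi^n\diamond\bj)=\bi^n\bj$ by induction on $n$ (not on $\ell(\bj)$), using that every shuffle of $\bi^n$ and $\bj$ lies in some $(\bi^{n-1}\diamond\bj_1)(\bi\diamond\bj_2)$; with the max in hand it then runs the $\ell(\bj)$-induction for the coefficient in (1), bounding the unwanted term strictly below $\bi^n\bj$ by the same factorisation trick. For (2) the paper avoids letter-collision issues altogether by applying the anti-automorphism $\sigma$ of Proposition~\ref{prop-linear maps} to reduce to (1).

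The gap is most serious in (3). Your case split on whether $i_{\ell-1}=i_\ell$ is incomplete: for $\bi=(a,\dots,n,n,\dots,b)$ with $b<n$ one has $i_{\ell-1}=b+1\neq b=i_\ell$, yet the letter $b$ reappears at position $b-a+1$, so your iteration still spawns an extra branch at step $\ell-(b-a+1)$. Thus the ``delicate'' case is not confined to $\bi=(a,\dots,n,n)$, and ``the extra branches contribute zero'' is neither proved nor obviously true. The paper sidesteps all of this: having (2), it rewrites the word $\bi^{n-1}Y$ as $Y\diamond\bi^{n-1}$ plus strictly smaller terms, so that $(\bi^{n-1}Y\diamond\bi)i_\ell=(Y\diamond(\bi^{n-1}\diamond\bi))i_\ell+(\text{lower})$, and then (2) again gives the coefficient of $\bi^nY$ in $Y\diamond\bi^n$ as $1$, cleanly producing $a_{n-1}$ with no case analysis on the letters of $\bi$. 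Your recursion $a_n=a_{n-1}+(-1)^{np(\bi)}q_\bi^{-2n}$ is correct and matches the paper's identity, but the route to it needs the $\sigma$-trick (or an equivalent argument) rather than letter-by-letter stripping.
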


\begin{proof}
Let $\bi\succ\bj.$  We will prove that $\mathrm{max}(\bi^{n}\diamond \bj)=\bi^{n}\bj$ and (1) by   induction on $n.$  The case $n=1$ follows from \cite[Lemma 4.5]{CHW}. Assume that $\mathrm{max}(\bi^{n-1}\diamond \bj)=\bi^{n-1}\bj$ for all $\bj\in\mathcal W^+$ such that $\bi\succ\bj$. Suppose that the word  $\mathbf k$ occurs as a nontrivial shuffle in $\bi^n\diamond \bj$ (i.e. $\mathbf k\neq \bi^n\bj$.) Then there exists a factorization $\bj=\bj_1\bj_2$ such that $\mathbf k$ occurs in $(\bi^{n-1}\diamond \bj_1)(\bi\diamond \bj_2).$ Clearly $\bi\succ\bj\succeq\bj_1$ and $\bj_1\in\mathcal W^+$ (as a factor of a dominant word). Hence by the inductive assumption 
\[\mathbf k\preceq \bi^{n-1}\bj_1\max(\bi\diamond \bj_2). \]  Now, since any word occurring in $\bj_1(\bi\diamond \bj_2)$ is a proper shuffle in $\bi\diamond (\bj_1\bj_2)=\bi\diamond \bj$ and the maximum word in $\bi\diamond \bj$ is $\bi\bj$ we have 
\[\mathbf k\preceq \bi^{n-1}\bj_1\max(\bi\diamond \bj_2)\prec \bi^{n-1}\mathrm{max}(\bi\diamond \bj)=\bi^n\bj\]  which proves that $\mathrm{max}(\bi^{n}\diamond \bj)=\bi^n\bj$. 

Next, we prove that the coefficient of $\bi^{n}\bj$ in $\bi^{n}\diamond \bj$ is $(-1)^{p(n\bi)p(\bj)}q^{-(n|\bi|,|\bj|)}$ by induction on $\ell(\bj).$ Let $\bi=(i_1,\ldots,i_d),$ $\bj=(j_1,\ldots,j_k)$ and assume that $\bi\succ\bj.$  Suppose that  $\ell(\bj)=1,$ so $\bj=j_1=j\in I$ and $\bi>j.$ Then we have $j<i_1.$ We claim that   the coefficient of $\bi^nj$ in $\bi^n\diamond j$ is $(-1)^{p(n\bi)p(j)}q^{-(|n\bi|,\alpha_{j})}.$  We have
\begin{align*}
\bi^{n}\diamond j&=\bi^{n-1}(i_1,\ldots,i_d)\diamond j\\
&=(\bi^{n-1}(i_1,\ldots,i_{d-1})\diamond j)i_d+(-1)^{p(n\bi)p(j)}q^{-(|n\bi|,\alpha_{j})}\bi^nj.
\end{align*}
We claim that $\max((\bi^{n-1}(i_1,\ldots,i_{d-1})\diamond j)i_d)\prec \bi^nj.$ Indeed, $\bi^{n-1}(i_1,\ldots,i_{d-1})ji_d\prec \bi^nj$ and any nontrivial shuffle  in $\bi^{n-1}(i_1,\ldots,i_{d-1})\diamond j$ occurs as a shuffle in either $(\bi^{n-1}\diamond j)(i_1,\ldots,i_{d-1})$ or in $\bi^{n-1}((i_1,\ldots,i_{d-1})\diamond j).$ By the above and Lemma \ref{lem-shuffle product order} we have that $\max((\bi^{n-1}\diamond j)(i_1,\ldots,i_{d-1}))i_d\prec\bi^nj$ and  $\max(\bi^{n-1}((i_1,\ldots,i_{d-1})\diamond j))i_d\prec\bi^nj$. Therefore, $\mathrm{max}((\bi^{n-1}(i_1,\ldots,i_{d-1})\diamond j)i_d)\prec\bi^{n}j$ which proves that  the coefficient of $\bi^nj$ in $\bi^n\diamond j$ has to be $(-1)^{p(n\bi)p(j)}q^{-(|n\bi|,\alpha_{j})}.$

For the inductive step, assume that the coefficient of $\bi^{n}\mathbf w$ is $(-1)^{p(n\bi)p(\mathbf w)}q^{-(n|\bi|,|\mathbf w|)}$ for all $n$ and $\mathbf w\in\mathcal W^{+}$ such that $\bi\succ \mathbf w$ and $\ell(\mathbf w)<\ell(\bj).$ We have
\begin{align}\label{lem-top coeff special-eqn 1}
\bi^{n}\diamond \bj&=\bi^{n-1}(i_1,\ldots,i_d)\diamond (j_1,\ldots,j_k)\notag\\
&=(\bi^{n-1}(i_1,\ldots,i_{d-1})\diamond \bj)i_d+(-1)^{p(n\bi)p(j_{k})}q^{-(|n\bi|,\alpha_{j_{k}})}(\bi^n\diamond (j_1,\ldots,j_{k-1}))j_k.
\end{align} 

Again we show that $\max((\bi^{n-1}(i_1,\ldots,i_{d-1})\diamond \bj)i_d)\prec \bi^n\bj.$ Clearly,  $\bi^{n-1}(i_1,\ldots,i_{d-1})\bj i_d\prec \bi^n\bj,$ and if $\mathbf k$ is any nontrivial shuffle  in $\bi^{n-1}(i_1,\ldots,i_{d-1})\diamond \bj,$ then there exists a factorization $\bj=\bj_1\bj_2$ such that $\mathbf k$ occurs in $(\bi^{n-1}\diamond \bj_1)((i_1,\ldots,i_{d-1})\diamond \bj_2).$  Again $\bi\succ\bj\succeq\bj_1$ and $\bj_1\in\mathcal W^+$. Since 
$\max(\bi^{n-1}\diamond \bj_1)=\bi^{n-1}\bj_1,$ it follows that 
\[\mathbf k\preceq \bi^{n-1}\bj_1\max((i_1,\ldots,i_{d-1})\diamond \bj_2). \]  Now, since any word occurring in $\bj_1((i_1,\ldots,i_{d-1})\diamond \bj_2)$ is a proper shuffle in $(i_1,\ldots,i_{d-1})\diamond (\bj_1\bj_2)=(i_1,\ldots,i_{d-1})\diamond \bj,$ we have
\[\mathbf k\prec \bi^{n-1}\mathrm{max}((i_1,\ldots,i_{d-1})\diamond \bj)\prec\bi^{n-1}\ \mathrm{max}(\bi\diamond \bj)=\bi^n\bj.\]
By induction on $\ell(\bj)$ and  \eqref{lem-top coeff special-eqn 1}, it follows that the coefficient of $\bi^n\bj$ in $\bi^n\diamond \bj$  is $(-1)^{p(n\bi)p(\bj)}q^{-(|n\bi|,|\bj|)},$ which proves (1).

Next, we prove that $\max(\bj\diamond \bi^{n})=\bi^n\bj.$ By Proposition \ref{prop-linear maps}, we have $\sigma(\bj)=(-1)^{P(\bj)}q^{-N(|\bj|)}\bj$  and $\sigma(\bi^{n})=(-1)^{P(n\bi)}q^{-N(n|\bi|)}\bi^{n}.$ Since $\sigma$ is an anti-automorphism, we have
\begin{align}\label{lem-top coeff special-eqn 2}
\sigma(\bj\diamond \bi^{n})&=\sigma(\bi)^{n}\diamond \sigma(\bj)=(-1)^{P(n\bi)+P(\bj)}q^{-(N(n|\bi|)+N(|\bj|))}\bi^{n}\diamond \bj .
\end{align}

By  Proposition \ref{prop-linear maps}, $\mathrm{max}(\sigma(u))=\mathrm{max}(u)$ for all $u\in\mathcal U.$ Therefore by \eqref{lem-top coeff special-eqn 2} we have \\
$\mathrm{max}(\bj\diamond \bi^{n})=\mathrm{max}(\bi^{n}\diamond \bj)=\bi^n\bj$. Moreover,  the coefficient of $\bi^{n}\bj$ in $\bi^{n}\diamond \bj$ is $(-1)^{p(n\bi)p(\bj)}q^{-(n|\bi|,|\bj|)}$  by (1). Lemma \ref{lem-N(nu)} yields  
\[
N(n|\bi|)+N(|\bj|)+(|n\bi|,|\bj|)=N(|\bi^n\bj|) \quad \text{ and } \quad 
P(n\bi)+P(\bj)+p(n\bi)p(\bj)=P(\bi^n\bj).
\]
Consequently,  for some coefficients $a_{\mathbf k}\in\mathcal A,$ we have
\begin{equation}\label{eq-part 2}
\sigma(\bj\diamond \bi^{n})=(-1)^{P(\bi^n\bj)}q^{-N(|\bi^n\bj|)}\bi^n\bj+\sum_{\mathbf k\prec \bi^n\bj}a_{\mathbf k}\mathbf k.
\end{equation} Since $\sigma^2=\mathrm{Id}_{\mathcal F}$,  $\sigma(q)=-q^{-1},$  $\sigma(\bi^n\bj)=(-1)^{P(\bi^n\bj)}q^{-N(|\bi^n\bj|)}\bi^{n}\bj$ and by Lemma \ref{lem-N(nu)}(1) $N(|\bi^n\bj|)$ is even,   it follows  by (\ref{eq-part 2}) that
\begin{align*}
\bj\diamond \bi^{ n}&=\bi^n\bj+\sum_{\sigma(\mathbf k)\prec \bi^n\bj}\sigma(a_{\mathbf k})\mathbf \sigma(\mathbf k).
\end{align*} This proves (2). 

 Next, we prove that $\max(\bi^{n}\diamond \bi)=\bi^{n+1}$ and $(3)$ by induction on $n.$ Let $\bi=(i_1,\ldots,i_d).$  Suppose that $n=1.$ By \cite[Lemma 4.5]{CHW},  $\max(\bi\diamond\bi)=\bi^2$ and the coefficient of $\bi^2$ in $\bi\diamond \bi$ is $$1+(-1)^{p(\bi)}q^{-(|\bi|,|\bi|)}=1+(-1)^{p(\bi)}q_{\bi}^{-2}=(-1)^{p(\bi)}q_{\bi}^{-1}[2]_{\bi}.$$   For the inductive step, assume that $\max(\bi^{n-1}\diamond \bi)=\bi^{n}$ and that the coefficient of $\bi^{n}$ in $\bi^{n-1}\diamond \bi$ is $(-1)^{(n-1)p(\bi)}q_{\bi}^{-(n-1)}[n]_{\bi}.$ If $d=1,$ then the result follows easily by induction on $n$ and the fact that \begin{equation}\label{lem-top coeff special-eqn 4}(-1)^{(n-1)p(\bi)}q_{\bi}^{-(n-1)}[n]_{\bi}+(-1)^{p(n\bi)p(\bi)}q^{-(n|\bi|,|\bi|)}=(-1)^{np(\bi)}q_{\bi}^{-n}[n+1]_{\bi},\end{equation} which can be easily verified from (\ref{eq-quantum numbers}). Assume that $d>1$ and let $\bj=(i_1,\ldots, i_{d-1}).$ Then $\bj\in\mathcal W^+$ and $\bi\succ \bj.$ Moreover,
\begin{align*}
\bi^{n}\diamond \bi&=(\bi^{n-1}\bj\diamond \bi)i_d+(-1)^{p(n\bi)p(i_d)}q^{-(n|\bi|,|i_d|)}(\bi^n\diamond \bj)i_d.
\end{align*}
By part (2)  there exist $b_{\mathbf k}\in\mathcal A$ such that
\begin{equation}\label{lem-top coeff special-eqn 3}
\bi^{n-1}\bj=\bj\diamond \bi^{n-1}+\sum_{\mathbf k\prec \bi^{n-1}\bj}b_{\mathbf k}\mathbf k.
\end{equation} 

\noindent Hence, using  \eqref{lem-top coeff special-eqn 3}, induction on $n,$ (1), and (2), %and Lemma \ref{lem-N(nu)}(1), 
we obtain for some $a_{\mathbf h}, c_{\mathbf s}\in \mathcal A$
\begin{align*}
\bi^{n}\diamond \bi&=((\bj\diamond \bi^{n-1})\diamond \bi)i_d+\sum_{\mathbf k\prec \bi^{n-1}\bj}b_{\mathbf k}(\mathbf  k\diamond \bi)i_d+(-1)^{p(n\bi)p(i_d)}q^{-(n|\bi|,|i_{d}|)}(\bi^n\diamond \bj)i_d\\
&=(\bj\diamond (\bi^{n-1}\diamond\bi))i_d+(-1)^{p(n\bi)p(i_d)}q^{-(n|\bi|,|i_{d}|)}(\bi^n\diamond \bj)i_d+\sum_{\mathbf k\prec \bi^{n-1}\bj}b_{\mathbf k}(\mathbf k\diamond \bi)i_d\\
&=(-1)^{(n-1)p(\bi)}q_{\bi}^{-(n-1)}[n]_{\bi}(\bj\diamond \bi^{n})i_d+(-1)^{p(n\bi)p(|i_d|+|\bj|)}q^{-(n|\bi|,|i_d|+|\bj|)}(\bi^n\bj)i_d\\
& \qquad +\sum_{\mathbf h\prec \bi^n\bj}a_{\mathbf h}\mathbf hi_d+\sum_{\mathbf k\prec \bi^{n-1}\bj}b_{\mathbf k}(\mathbf k\diamond \bi)i_d\\
&=\left((-1)^{p((n-1)\bi)}q_{\bi}^{-(n-1)}[n]_{\bi}+(-1)^{p(n\bi)p(\bi)}q^{-(n|\bi|,|\bi|)}\right)\bi^{n+1} \\
& \qquad + \sum_{\mathbf s\prec \bi^n\bj}c_{\mathbf s}\mathbf s i_d+\sum_{\mathbf k\prec \bi^{n-1}\bj}b_{\mathbf k} (\mathbf k\diamond \bi)i_d .
\end{align*}

%Again, using  \eqref{lem-top coeff special-eqn 3} and parts (1) and (2),  we have that $\max((\bj\diamond \bi^{n})i_d)=\max((\bi^n\diamond \bj)i_d)=\bi^{n}\bj i_d=\max(\bi^{n-1}\bj\diamond \bi)i_d=\bi^{n+1}.$ 
\noindent Moreover,  $\mathbf si_d\prec\bi^{n}\bj i_d=\bi^{n+1}$ for all $\mathbf s\prec \bi^n\bj$ and by Lemma \ref{lem-shuffle product order} we get that $\max(\mathbf k\diamond \bi)i_d\prec\max(\bi^{n-1}\bj\diamond \bi)i_d=(\bi^{n}\bj)i_d=\bi^{n+1}$ for all $\mathbf k\prec \bi^{n-1}\bj.$   Hence  $\max(\bi^{n}\diamond \bi)=\bi^{n+1}$ and the leading coefficient is  $(-1)^{np(\bi)}q_{\bi}^{-n}[n+1]_{\bi}$ by \eqref{lem-top coeff special-eqn 4}.

%To complete the proof of (3), it suffices to check that \begin{equation}\label{lem-top coeff-special-eqn 4}(-1)^{(n-1)p(\bi)}q_{\bi}^{-(n-1)}[n]_{\bi}+(-1)^{p(n\bi)p(\bi)}q^{-(n|\bi|,|\bi|)}=(-1)^{np(\bi)}q_{\bi}^{-n}[n+1]_{\bi},\end{equation} which follows easily from (\ref{eq-quantum numbers}). 

Finally, $\max(\bi\diamond\bi^{n})=\bi^{n}\diamond \bi$ since $\max(\sigma(u))=\max(u)$ for all $u\in\mathcal U.$

\end{proof}

\begin{Cor}\label{lem-top coeff shuffle special} Let $\bi\in\mathcal L,$  $\bj\in\mathcal W^+,$ $\bi\succ \bj,$ and $n\in\mathbb Z_{>0}.$ Then we have $\max(\bj\diamond \bi^{\diamond n})=\bi^{n}\bj$ and  the coefficient of $\bi^{n}\bj$ in $\bj\diamond \bi^{\diamond n}$ is $(-1)^{\frac{n(n-1)}{2}p(\bi)}q_{\bi}^{-\frac{n(n-1)}{2}}[n]_{\bi}!$.

%\item $\mathrm{max}(\bi^{\diamond n}\diamond \bj)=\bi^{n}\bj$ and the coefficient of $\bi^{n}\bj$ in $\bi^{\diamond n}\diamond \bj$ is $$(-1)^{p(\bi)n(n-1)/2}(-1)^{p(n\bi)p(\bj)}q^{-(n|\bi|,|\bj|)}q_{\bi}^{-n(n-1)/2}[n]_{\bi}!.$$

\end{Cor}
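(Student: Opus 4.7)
The plan is to reduce the corollary to Lemma \ref{lem-top coeff special} by first analyzing the pure shuffle power $\bi^{\diamond n}$ in the word basis of $\mathcal F$, and then shuffling $\bj$ in on the left. Concretely, I would first prove by induction on $n$ that
\[
\bi^{\diamond n} \;=\; (-1)^{\frac{n(n-1)}{2}p(\bi)}\, q_{\bi}^{-\frac{n(n-1)}{2}}\, [n]_{\bi}!\ \bi^n \;+\; \sum_{\mathbf w\prec \bi^n} e_{\mathbf w}\, \mathbf w
\]
for some coefficients $e_{\mathbf w}\in\mathcal A$. The base case $n=1$ is trivial; for the inductive step I would write $\bi^{\diamond n}=\bi^{\diamond(n-1)}\diamond \bi$, substitute the inductive expression, and isolate the contribution $c_{n-1}(\bi^{n-1}\diamond \bi)$: Lemma \ref{lem-top coeff special}(3) identifies its leading term as $(-1)^{(n-1)p(\bi)}q_{\bi}^{-(n-1)}[n]_{\bi}\,\bi^n$, while Lemma \ref{lem-shuffle product order} guarantees that every other shuffle $\mathbf w\diamond \bi$ with $\mathbf w\prec\bi^{n-1}$ has maximum strictly below $\bi^n$. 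The resulting recursion $c_n=c_{n-1}\cdot(-1)^{(n-1)p(\bi)}q_{\bi}^{-(n-1)}[n]_{\bi}$ then telescopes using $\sum_{k=1}^{n-1}k=\tfrac{n(n-1)}{2}$ and $\prod_{k=2}^{n}[k]_{\bi}=[n]_{\bi}!$.

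Next, I would apply this expansion to write
\[
\bj\diamond \bi^{\diamond n} \;=\; c_n\,(\bj\diamond \bi^n) \;+\; \sum_{\mathbf w\prec \bi^n} e_{\mathbf w}\,(\bj\diamond \mathbf w),
\]
where $c_n = (-1)^{\frac{n(n-1)}{2}p(\bi)}q_{\bi}^{-\frac{n(n-1)}{2}}[n]_{\bi}!$. Since $\bi\succ\bj$, Lemma \ref{lem-top coeff special}(2) gives $\max(\bj\diamond \bi^n)=\bi^n\bj$ with coefficient~$1$, and Lemma \ref{lem-shuffle product order} yields $\max(\bj\diamond \mathbf w)\prec\max(\bj\diamond \bi^n)=\bi^n\bj$ for each $\mathbf w\prec\bi^n$. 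Hence the leading word of $\bj\diamond \bi^{\diamond n}$ is $\bi^n\bj$ with coefficient precisely $c_n$, as required.

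The main obstacle is the bookkeeping inside the induction of the first step: one has to track signs of the form $(-1)^{kp(\bi)}$ and negative powers of $q_{\bi}$ carefully through the recursion to recognize the closed form as the product $(-1)^{\binom{n}{2}p(\bi)}q_{\bi}^{-\binom{n}{2}}[n]_{\bi}!$. Once this is in hand, the second step follows mechanically from the two lemmas already at our disposal, and no further interaction with the maps $\tau$, $\sigma$, or $\overline{\phantom{L}}$ is needed.
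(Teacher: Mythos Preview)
Your proposal is correct and follows essentially the same approach as the paper's proof: first establish by induction (via Lemma \ref{lem-top coeff special}(3) and Lemma \ref{lem-shuffle product order}) that $\bi^{\diamond n}$ has leading term $c_n\,\bi^n$ with $c_n=(-1)^{\binom{n}{2}p(\bi)}q_{\bi}^{-\binom{n}{2}}[n]_{\bi}!$, then shuffle $\bj$ in on the left and invoke Lemma \ref{lem-top coeff special}(2) together with Lemma \ref{lem-shuffle product order} to read off the leading term of $\bj\diamond\bi^{\diamond n}$. The paper is terser in the second step, but the argument is the same.
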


\begin{proof} 
First, we claim that $\max(\bi^{\diamond n})=\bi^{n}$ and that this  top word appears with coefficient \\
$(-1)^{\frac{n(n-1)}{2}p(\bi)}q_{\bi}^{-\frac{n(n-1)}{2}}[n]_{\bi}!.$  The case $n=1$ is trivial. Assume that  $n>1.$ Then by induction, Lemma \ref{lem-top coeff special}(3), and Lemma \ref{lem-shuffle product order}, we have that for some $a_{\mathbf h} \in \mathcal A,$
\begin{align*} \bi^{\diamond n}&=\bi^{\diamond (n-1)}\diamond \bi\\
%&=(-1)^{p(\bi)(n-1)(n-2)/2}q_{\bi}^{-(n-1)(n-2)/2}[n-1]_{\bi}!\bi^{n-1}\diamond \bi+\sum_{\mathbf h\prec \bi^{n-1}}a_{\mathbf h}\mathbf h\diamond \bi\\
&=(-1)^{p(\bi)(n-1)(n-2)/2}q_{\bi}^{-(n-1)(n-2)/2}[n-1]_{\bi}!(-1)^{(n-1)p(\bi)}q_{\bi}^{-(n-1)}[n]_{\bi}\bi^{n}+\sum_{\mathbf h\prec \bi^{n}}a_{\mathbf h}\mathbf h\\
&=(-1)^{p(\bi)n(n-1)/2}q_{\bi}^{-n(n-1)/2}[n]_{\bi}!\bi^{n}+\sum_{\mathbf h\prec \bi^{n}}a_{\mathbf h}\mathbf h.
\end{align*} Now the statement of the corollary follows from the above computation and  Lemma \ref{lem-top coeff special}(2).

 \end{proof}

Let $\bi\in\mathcal W^+$ with canonical factorization $\mathbf i= \mathbf i_1^{n_1} \cdots \mathbf i_d^{n_d},$ where $n_1, \dots, n_d \in \mathbb Z_{>0}$, $\mathbf i _1, \dots, \mathbf i _d \in \mathcal L^+$ and $\mathbf i_1 \succ \cdots \succ \mathbf i_d$. 
We define
 \begin{equation} \label{eq-xi-s}
 \xi(\mathbf i)= \sum_{k=1}^d p(\bi_k)n_k (n_k-1)/2 \quad {\rm and}\quad s(\mathbf i)= \sum_{k=1}^d (|\mathbf i_k| , |\mathbf i_k|) n_k (n_k-1)/4.
 \end{equation}

\begin{Cor}\label{cor-top coefficient}
With the notations above, we have, for some $a_{\mathbf k}\in\mathcal A$,
\[\mathbf i_d^{\diamond n_d}\diamond \cdots\diamond \mathbf i_1^{\diamond n_1}=\left((-1)^{\xi(\mathbf i)}q^{-s(\mathbf i)}\prod_{k=1}^d[n_{k}]_{\bi_k}!\right)\bi+\sum_{\mathbf k\prec \bi}a_{\mathbf k}\mathbf k.\]
\end{Cor}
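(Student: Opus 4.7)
The plan is induction on $d$, the number of distinct Lyndon factors.

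For the base case $d=1$, we have $\bi = \bi_1^{n_1}$, and the claim is exactly the computation performed inside the proof of Corollary~\ref{lem-top coeff shuffle special}: namely $\max(\bi_1^{\diamond n_1}) = \bi_1^{n_1}$ with leading coefficient $(-1)^{p(\bi_1)n_1(n_1-1)/2}\,q_{\bi_1}^{-n_1(n_1-1)/2}\,[n_1]_{\bi_1}!$. Since $q_{\bi_1}^{-n_1(n_1-1)/2} = q^{-(|\bi_1|,|\bi_1|)n_1(n_1-1)/4}$, this matches the formula with $\xi(\bi) = p(\bi_1)n_1(n_1-1)/2$ and $s(\bi) = (|\bi_1|,|\bi_1|)n_1(n_1-1)/4$.

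For the inductive step, set $\bj = \bi_2^{n_2}\cdots\bi_d^{n_d}$; this is the canonical factorization of a dominant word with $d-1$ distinct Lyndon factors, satisfying $\bi_1 \succ \bi_2 \succeq \bj$. By the inductive hypothesis,
\[
\bi_d^{\diamond n_d}\diamond\cdots\diamond\bi_2^{\diamond n_2} \;=\; (-1)^{\xi(\bj)}q^{-s(\bj)}\Big(\prod_{k=2}^d[n_k]_{\bi_k}!\Big)\,\bj \;+\; \sum_{\mathbf{k}\prec\bj} a_{\mathbf{k}}\,\mathbf{k},
\]
where every $\mathbf{k}$ in the remainder has $|\mathbf{k}|=|\bj|$. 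Shuffling on the right by $\bi_1^{\diamond n_1}$ (using associativity of $\diamond$), it suffices to analyze $\bj\diamond\bi_1^{\diamond n_1}$ and $\mathbf{k}\diamond\bi_1^{\diamond n_1}$ for $\mathbf{k}\prec\bj$. Corollary~\ref{lem-top coeff shuffle special}, applied with the Lyndon word $\bi_1$, the dominant word $\bj$, and exponent $n_1$, gives
\[
\max(\bj\diamond\bi_1^{\diamond n_1}) = \bi_1^{n_1}\bj = \bi,\qquad \text{coefficient of }\bi \;=\; (-1)^{p(\bi_1)n_1(n_1-1)/2}\,q_{\bi_1}^{-n_1(n_1-1)/2}\,[n_1]_{\bi_1}!.
\]
For each $\mathbf{k}\prec\bj$, writing $\bi_1^{\diamond n_1} = \sum c_{\mathbf{m}}\mathbf{m}$ (where $\mathbf{m}\preceq\bi_1^{n_1}$ by the base case), the strict form of Lemma~\ref{lem-shuffle product order} gives $\max(\mathbf{k}\diamond\mathbf{m}) \prec \max(\bj\diamond\bi_1^{n_1}) = \bi$, so each $\mathbf{k}\diamond\bi_1^{\diamond n_1}$ contributes only terms strictly below $\bi$.

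It remains to combine the coefficients. Using
\[
\xi(\bi) = \xi(\bj) + \tfrac{n_1(n_1-1)}{2}p(\bi_1), \qquad s(\bi) = s(\bj) + \tfrac{(|\bi_1|,|\bi_1|)n_1(n_1-1)}{4},
\]
together with $q_{\bi_1}^{-n_1(n_1-1)/2} = q^{-(|\bi_1|,|\bi_1|)n_1(n_1-1)/4}$, the product of the two leading coefficients equals $(-1)^{\xi(\bi)}q^{-s(\bi)}\prod_{k=1}^d[n_k]_{\bi_k}!$, as desired. The main technical point is the strict control of error terms coming from the lower part of the inductive step, which is delivered cleanly by the strict version of Lemma~\ref{lem-shuffle product order}; the rest is bookkeeping of signs and $q$-powers via the additivity identities in Lemma~\ref{lem-N(nu)}.
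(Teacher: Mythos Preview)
Your proof is correct and follows essentially the same route as the paper's: induction on $d$, with the base case supplied by Corollary~\ref{lem-top coeff shuffle special} and the inductive step controlled by that corollary together with Lemma~\ref{lem-shuffle product order}. The only cosmetic difference is that you expand $\bi_1^{\diamond n_1}$ into monomials before invoking Lemma~\ref{lem-shuffle product order}, whereas the paper applies the comparison directly at the level of $\mathbf{h}\diamond\bi_1^{\diamond n_1}$ versus $\bj\diamond\bi_1^{\diamond n_1}$; and the additivity of $\xi$ and $s$ over the canonical factorization comes straight from the definition~\eqref{eq-xi-s} rather than from Lemma~\ref{lem-N(nu)}.
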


\begin{proof} We will prove the statement by induction on $d.$ If $d=1,$ then $\bi=\bi_1^{n_1}$ and the result follows from Lemma \ref{lem-top coeff shuffle special} (1) since  $\xi(\bi_1^{n_{1}})=p(\bi_1)n_{1}(n_1-1)/2$ and $q^{-s(\bi_1^{n_1})}=q^{-(|\bi_1|,|\bi_1|)n_1(n_{1}-1)/4}=q_{\bi_1}^{-n_1(n_{1}-1)/2}.$

We now proceed to the inductive step. Suppose that $d>1$ and let $\bj=\bi_2^{n_2}\ldots\bi_{d}^{n_d}.$ Then $\bi_1\succ \bj$ and  $\bj\in\mathcal W^+$  by \cite[Lemma 4.2]{CHW}. 
By the inductive hypothesis  we obtain 
\begin{equation}\label{cor- top coefficient eqn1}
\mathbf i_d^{\diamond n_d}\diamond \cdots\diamond \bi_{2}^{\diamond n_2}\diamond \mathbf i_1^{\diamond n_1}=\left((-1)^{\xi(\mathbf j)}q^{-s(\mathbf j)}\prod_{k=2}^d[n_{k}]_{\bi_k}!\right)\bj\diamond \bi_1^{{n_1}}+\sum_{\mathbf{h}\prec \bj}b_{\mathbf{h}}(\mathbf h\diamond \bi_1^{\diamond n_1}).
\end{equation}  
By Lemma \ref{lem-shuffle product order},   $\mathbf h\diamond \bi_{1}^{\diamond n_1}\prec \bj \diamond\bi_{1}^{\diamond n_1}$ for all $\mathbf h\in\mathcal W$ such that $\mathbf h\prec \bj,$ $|\mathbf h|=|\bj|.$ Moreover, by Lemma \ref{lem-top coeff shuffle special}(2), $\mathrm{max}(\bj\diamond \bi^{\diamond n_1})=\bi_{1}^{n_{1}}\bj=\bi$ and the coefficient of $\bi_1^{n_1}\bj$ in $\bj\diamond \bi^{\diamond n_1}$ is $$(-1)^{p(\bi_1)n_1(n_{1}-1)/2}q_{\bi_1}^{-n_1(n_{1}-1)/2}[n_1]_{\bi_1}!=(-1)^{\xi(\bi_1^{n_1})}q^{-s(\bi_1^{n_1})}.$$ The statement of the corollary now follows from \eqref{cor- top coefficient eqn1} and the equalities $\xi(\bi)=\xi(\bi_1^{n_1})+\xi(\bj)$ and $s(\bi)=s(\bi_1^{n_{1}})+s(\bj).$ 
\end{proof}

\subsection{PBW and dual canonical bases}\label{PBW and dual canonical bases}
  For $1\le i \le j \le n$, we set
\begin{enumerate}
\item $E_{\mathbf i}=\begin{cases} (-1)^{j-i} (q^2-q^{-2})^{j-i}q^{-N(|\mathbf i|)}\mathbf i & \text{if } \mathbf i =(i, \dots, j),\\
 (-1)^{j-i} (q^2-q^{-2})^{2n-i-j}q^{-N(|\mathbf i|)}[2]_n^{-1}\mathbf i & \text{if } \mathbf i=(i, \dots, n, n, \dots, j), \ i<j;
\end{cases}$	

\item $E_{\mathbf i}^*=\begin{cases} \mathbf i & \text{if } \mathbf i =(i, \dots, j),\\ [2]_n \mathbf i & \text{if } \mathbf i=(i, \dots, n, n, \dots, j), \ i<j.
\end{cases}$
\end{enumerate}

Let  $\mathbf i  \in \mathcal W^+$. As before, we write the canonical factorization of $\mathbf i$ in the form: \begin{equation} \label{eqn-i}
 \mathbf i= \mathbf i_1^{n_1} \cdots \mathbf i_d^{n_d},\end{equation} where $n_1, \dots, n_d \in \mathbb Z_{>0}$, $\mathbf i _1, \dots, \mathbf i _d \in \mathcal L^+$ and $\mathbf i_1 \succ \cdots \succ \mathbf i_d$. We define
\begin{equation}\label{eqn-Ei}
E_{\mathbf i}=E_{\mathbf i_d}^{(n_d)} \diamond \cdots \diamond E_{\mathbf i_1}^{(n_1)}, 
\end{equation}
where $E_{\mathbf j}^{(m)} = E_{\mathbf j}^{\diamond m}/[m]_{\mathbf j}!$ for $\mathbf j \in \mathcal L^+$, and define
\[ E_{\mathbf i}^* = E_{\mathbf i}/(E_{\mathbf i}, E_{\mathbf i}), \]
where $(\cdot, \cdot)$ is the nondegenerate bilinear form on $\mathcal U$ from  Proposition \ref{prop-bi}. Explicit computations of the bilinear form can be found in \cite[Theorem 5.7]{CHW}. In particular, for $\bi,\bj\in\mathcal W^+$, we have $(E_{\bi},E_{\bj})=0$ unless $\bi=\bj,$ and 
\begin{equation}\label{eqn-(Ei,Ei)}
(E_\bi,E_\bi)=(-1)^{\xi(\bi)}q^{-s(\bi)}\prod_{k=1}^{d}\dfrac{(E_{\bi_k},E_{\bi_k})^{n_k}}{[n_k]_{\bi_k}!}
\end{equation} where $\xi(\bi)$ and $s(\bi)$ are defined in \eqref{eq-xi-s}.

The sets $\{E_{\mathbf i}\,|\, \mathbf i \in \mathcal W^+\}$ and $\{E^*_{\mathbf i}\,|\, \mathbf i \in \mathcal W^+\}$ are bases for $\mathcal U$, called the {\em PBW basis} and the {\em dual PBW basis}, respectively.

\begin{Lem} \label{eqn-Ei*}
For $\bi\in\mathcal W^+$ with canonical factorization as in (\ref{eqn-i}) we have 
\begin{equation}
E^*_{\mathbf i}=(-1)^{\xi(\bi)}q^{s(\bi)}(E^*_{\mathbf i_d})^{\diamond n_d} \diamond \cdots \diamond (E^*_{\mathbf i_1})^{\diamond n_1}.
\end{equation}
\end{Lem}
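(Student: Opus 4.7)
The proof is essentially a direct unraveling of the definitions combined with the explicit formula \eqref{eqn-(Ei,Ei)} for the bilinear form on PBW basis elements. The plan is as follows.

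First, I would start from the definition $E_{\mathbf i}=E_{\mathbf i_d}^{(n_d)} \diamond \cdots \diamond E_{\mathbf i_1}^{(n_1)}$ and expand each divided power $E_{\mathbf i_k}^{(n_k)}=E_{\mathbf i_k}^{\diamond n_k}/[n_k]_{\mathbf i_k}!$. For each Lyndon factor $\mathbf i_k\in\mathcal L^+$, the definition of the dual PBW element gives $E_{\mathbf i_k}=(E_{\mathbf i_k},E_{\mathbf i_k})\,E^*_{\mathbf i_k}$, so each shuffle power satisfies
\[
E_{\mathbf i_k}^{\diamond n_k}=(E_{\mathbf i_k},E_{\mathbf i_k})^{n_k}\,(E^*_{\mathbf i_k})^{\diamond n_k}.
\]
Substituting this into the definition of $E_{\mathbf i}$ yields
\[
E_{\mathbf i}=\left(\prod_{k=1}^{d}\frac{(E_{\mathbf i_k},E_{\mathbf i_k})^{n_k}}{[n_k]_{\mathbf i_k}!}\right)(E^*_{\mathbf i_d})^{\diamond n_d}\diamond\cdots\diamond (E^*_{\mathbf i_1})^{\diamond n_1}.
\]

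Next, I would divide both sides by $(E_{\mathbf i},E_{\mathbf i})$ to get $E^*_{\mathbf i}$, and substitute the explicit value
\[
(E_{\mathbf i},E_{\mathbf i})=(-1)^{\xi(\mathbf i)}q^{-s(\mathbf i)}\prod_{k=1}^{d}\frac{(E_{\mathbf i_k},E_{\mathbf i_k})^{n_k}}{[n_k]_{\mathbf i_k}!}
\]
from \eqref{eqn-(Ei,Ei)}. The product $\prod_{k}(E_{\mathbf i_k},E_{\mathbf i_k})^{n_k}/[n_k]_{\mathbf i_k}!$ cancels between numerator and denominator, leaving
\[
E^*_{\mathbf i}=\bigl((-1)^{\xi(\mathbf i)}q^{-s(\mathbf i)}\bigr)^{-1}(E^*_{\mathbf i_d})^{\diamond n_d}\diamond\cdots\diamond (E^*_{\mathbf i_1})^{\diamond n_1}=(-1)^{\xi(\mathbf i)}q^{s(\mathbf i)}(E^*_{\mathbf i_d})^{\diamond n_d}\diamond\cdots\diamond (E^*_{\mathbf i_1})^{\diamond n_1},
\]
which is the claimed identity. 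The only subtlety is the sign, which is resolved by $(-1)^{-\xi(\mathbf i)}=(-1)^{\xi(\mathbf i)}$.

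There is no real obstacle here: the lemma is a bookkeeping statement that merely translates the formula \eqref{eqn-Ei} for $E_{\mathbf i}$ into the corresponding one for $E^*_{\mathbf i}$, using the computation of $(E_{\mathbf i},E_{\mathbf i})$ already recorded in \eqref{eqn-(Ei,Ei)}. The one thing to be careful about is keeping track of the order of the shuffle factors (which is reversed relative to the canonical factorization in \eqref{eqn-Ei}), and ensuring that the sign $\xi(\mathbf i)$ and the degree $s(\mathbf i)$ are correctly inherited from the formula for $(E_{\mathbf i},E_{\mathbf i})$.
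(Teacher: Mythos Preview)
Your proposal is correct and follows essentially the same approach as the paper: both proofs combine the definition $E^*_{\mathbf i}=E_{\mathbf i}/(E_{\mathbf i},E_{\mathbf i})$, the expansion of divided powers in \eqref{eqn-Ei}, and the explicit value of $(E_{\mathbf i},E_{\mathbf i})$ from \eqref{eqn-(Ei,Ei)}, differing only in the order in which these substitutions are made.
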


\begin{proof} It follows from (\ref{eqn-Ei}) and (\ref{eqn-(Ei,Ei)}) that
\begin{align*}
E^*_{\mathbf i}&=\dfrac{E_{\bi}}{(E_{\bi},E_{\bi})}=(-1)^{\xi(\bi)}q^{s(\bi)}\prod_{k=1}^{d}\dfrac{[n_k]_{\bi_k}!}{(E_{\bi_k},E_{\bi_k})^{n_k}}E_{\bi}\\
&=(-1)^{\xi(\bi)}q^{s(\bi)}\left(\prod_{k=1}^{d}\dfrac{[n_k]_{\bi_k}!}{(E_{\bi_k},E_{\bi_k})^{n_k}}\right)E_{\mathbf i_d}^{(n_d)} \diamond \cdots \diamond E_{\mathbf i_1}^{(n_1)}\\
&=(-1)^{\xi(\bi)}q^{s(\bi)}\left(\prod_{k=1}^{d}\dfrac{[n_k]_{\bi_k}!}{(E_{\bi_k},E_{\bi_k})^{n_k}}\right)\dfrac{E_{\mathbf i_d}^{\diamond n_d}}{[n_d]_{\bi_d}!} \diamond \cdots \diamond \dfrac{E_{\mathbf i_1}^{\diamond n_1}}{[n_1]_{\bi_1}!}\\
&=(-1)^{\xi(\bi)}q^{s(\bi)}\dfrac{E_{\mathbf i_d}^{\diamond n_d}}{(E_{\bi_d},E_{\bi_d})^{n_d}} \diamond \cdots \diamond \dfrac{E_{\mathbf i_1}^{\diamond n_1}}{(E_{\bi_1},E_{\bi_1})^{n_1}}\\
&=(-1)^{\xi(\bi)}q^{s(\bi)}(E^*_{\mathbf i_d})^{\diamond n_d} \diamond \cdots \diamond (E^*_{\mathbf i_1})^{\diamond n_1}.
\end{align*}
\end{proof}

Define $\mathcal U_{\mathcal A}$ and $\mathcal U^*_{\mathcal A}$ to be the $\mathcal A$-subalgebras of $\mathcal U$ generated by $\{E_{\mathbf i}\,|\, \mathbf i \in \mathcal W^+\}$ and $\{E^*_{\mathbf i}\,|\, \mathbf i \in \mathcal W^+\}$, respectively.
 Then we have
\[ \mathcal U^*_{\mathcal A}=\{ v \in \mathcal U \,|\, (u,v)\in \mathcal A \text{ for all } u \in \mathcal U_{\mathcal A}\} . \] 
For $\mathbf i=\bi_1^{n_1} \cdots \bi_d^{n_d}$ as in (\ref{eqn-i}), set $\varsigma_{\mathbf i}=[n_1]_{\bi_1}! \cdots [n_d]_{\bi_d}!$ and consider the free $\mathcal A$-module $\mathcal F_{\mathcal A} = \bigoplus_{\mathbf i \in \mathcal W} \mathcal A \, \varsigma_{\mathbf i} \mathbf i$. Then we also have \[ \mathcal U^*_{\mathcal A}= \mathcal F_{\mathcal A} \cap \mathcal U, \] and  $\mathcal U^*_{\mathcal A}$ is $Q^+$-graded with 
$(\mathcal U^*_{\mathcal A})_{\alpha}=\mathcal F_{\mathcal A}\cap \mathcal U_{\alpha}.$

\begin{Thm}\cite[Theorem 7.11]{CHW}\label{thm-canonical basis}
There exists a basis $\{ b^*_{\mathbf i} \,|\, \mathbf i \in \mathcal W^+ \}$ for $\mathcal U_{\mathcal A}^*$ characterized by the properties:
\begin{enumerate}
\item $b^*_{\mathbf i} -E^*_{\mathbf i}$ is a linear combination of vectors $E^*_{\mathbf j}$, $\mathbf j \prec \mathbf i$, with coefficients $q \mathbb Z[q];$

\item If we write $b^*_{\mathbf i}=\sum_{\mathbf j} c_{\mathbf j}\  \mathbf j$, $c_{\mathbf j} \in \mathcal A$, then we have $\overline{c_{\mathbf j}}=c_{\mathbf j}$. (Recall $\bar q = -q^{-1}$.)
\end{enumerate}
 Furthermore, we have $\max(b^*_{\mathbf i})= \mathbf i$ for all $\mathbf i \in \mathcal W^+$, and $b^*_{\mathbf i}=E^*_{\mathbf i}$ for $\mathbf i \in \mathcal L^+$.
\end{Thm}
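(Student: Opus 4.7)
The plan is to apply the standard Lusztig algorithm to the dual PBW basis $\{E^*_{\mathbf i}\}_{\mathbf i \in \mathcal W^+}$ ordered by $\prec$: inductively construct $b^*_{\mathbf i}$ by starting from $E^*_{\mathbf i}$ and subtracting $q\mathbb Z[q]$-multiples of previously constructed $b^*_{\mathbf j}$ (with $\mathbf j \prec \mathbf i$) to force the coefficients in the word basis to be bar-invariant. Uniqueness of the correction is automatic from the elementary fact that every bar-antisymmetric element of $\mathcal A$ is uniquely of the form $\alpha - \overline{\alpha}$ with $\alpha \in q\mathbb Z[q]$.

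The preliminary step is to show that $\{E^*_{\mathbf i}\}$ is an $\mathcal A$-basis of $\mathcal U^*_{\mathcal A}$, that $\max(E^*_{\mathbf i}) = \mathbf i$, and that the coefficient of $\mathbf i$ in the word expansion of $E^*_{\mathbf i}$ is bar-invariant. Lemma~\ref{eqn-Ei*} combined with Corollary~\ref{cor-top coefficient} realizes $E^*_{\mathbf i}$ as the iterated shuffle $(-1)^{\xi(\mathbf i)}q^{s(\mathbf i)}(E^*_{\mathbf i_d})^{\diamond n_d}\diamond \cdots \diamond (E^*_{\mathbf i_1})^{\diamond n_1}$ whose maximum word is $\mathbf i$ with an explicit leading coefficient; bar-invariance of that coefficient reduces, via $\overline{[m]_{\mathbf i_k}} = [m]_{\mathbf i_k}$ (immediate from $\overline{q_{\mathbf i_k}} = q_{\mathbf i_k}^{-1}$ in the even case and a short calculation in the odd case) and the explicit shape of Lyndon $E^*_{\mathbf i_k}$ (values $1$ or $[2]_n = q^{-1}-q$, both bar-invariant), to the claim that $(-1)^{\xi(\mathbf i)}q^{s(\mathbf i)}$ combines with the shuffle leading coefficient of Corollary~\ref{cor-top coefficient} to produce a bar-invariant quantity.

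For the Lusztig step, fix $\mathbf i$ and suppose $b^*_{\mathbf j}$ has been constructed for all $\mathbf j \in \mathcal W^+_{|\mathbf i|}$ with $\mathbf j \prec \mathbf i$, each satisfying (1), (2), and $\max(b^*_{\mathbf j}) = \mathbf j$. Expand $E^*_{\mathbf i} = c_{\mathbf i}\mathbf i + \sum_{\mathbf k \prec \mathbf i} c_{\mathbf k}\mathbf k$ with $c_{\mathbf i}$ bar-invariant. Processing $\mathbf k$ in decreasing order under $\prec$: whenever $c_{\mathbf k}$ is not bar-invariant, decompose $c_{\mathbf k} - \overline{c_{\mathbf k}}$ uniquely as $\alpha - \overline{\alpha}$ with $\alpha \in q\mathbb Z[q]$, and subtract $\alpha \cdot b^*_{\mathbf k}$ from the current approximation; since $b^*_{\mathbf k}$ is supported on words $\preceq \mathbf k$ with leading $\mathbf k$-coefficient bar-invariant, this cancels the antisymmetric part at $\mathbf k$ without disturbing previously corrected higher-order coefficients. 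After finitely many steps we obtain $b^*_{\mathbf i}$ satisfying (1) and (2); uniqueness follows from applying the same antisymmetric decomposition to any other candidate and observing that the difference is a bar-invariant element of $\sum_{\mathbf j \prec \mathbf i} q\mathbb Z[q] E^*_{\mathbf j}$ with all $q\mathbb Z[q]$ coefficients both bar-invariant and in $q\mathbb Z[q]$, hence zero.

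For $\mathbf i \in \mathcal L^+$, $E^*_{\mathbf i}$ is a bar-invariant scalar multiple of the single word $\mathbf i$, so no correction is needed and $b^*_{\mathbf i} = E^*_{\mathbf i}$. The maximality $\max(b^*_{\mathbf i}) = \mathbf i$ follows from $\max(E^*_{\mathbf i}) = \mathbf i$ together with each correction $b^*_{\mathbf k}$ having $\max = \mathbf k \prec \mathbf i$. The main technical obstacle is verifying the bar-invariance of the leading $\mathbf i$-coefficient of $E^*_{\mathbf i}$ for non-Lyndon $\mathbf i$: the signs $(-1)^{\xi(\mathbf i)}$, the $q$-power $q^{s(\mathbf i)}$, the quantum factorials $[n_k]_{\mathbf i_k}!$, and the leading scalars of the iterated shuffles $(E^*_{\mathbf i_k})^{\diamond n_k}$ must combine into a bar-invariant element of $\mathcal A$, and tracking this requires the parity control of Lemma~\ref{lem-N(nu)} together with the fine shuffle analysis of Corollary~\ref{cor-top coefficient}.
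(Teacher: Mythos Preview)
The paper does not prove this theorem; it simply quotes \cite[Theorem~7.11]{CHW}. So there is no ``paper's own proof'' to compare against, and your sketch should be assessed on its own merits. Its overall shape (Lusztig--Kazhdan triangular algorithm) is the right one and is indeed what \cite{CHW} (following \cite{Lec}) carries out, but as written there is a genuine gap.

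Your inductive step reads: expand $E^*_{\mathbf i}$ in the \emph{word} basis, scan words $\mathbf k\prec\mathbf i$ in decreasing order, and whenever $c_{\mathbf k}$ fails to be bar-invariant subtract $\alpha\, b^*_{\mathbf k}$. But $b^*_{\mathbf k}$ exists only for \emph{dominant} $\mathbf k$, while most words appearing in the expansion of $E^*_{\mathbf i}$ are not dominant. If you restrict the scan to dominant $\mathbf k\in\mathcal W^+$ (as your inductive hypothesis suggests), then after all corrections you only know that the coefficients at dominant words are bar-invariant; nothing you wrote forces the non-dominant coefficients to be bar-invariant. The missing ingredient is that the ``coefficient bar'' $\iota$ (fix words, $q\mapsto -q^{-1}$) preserves $\mathcal U^*_{\mathcal A}$. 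On each weight space $\mathcal F_\alpha$ one has $\sigma=(-1)^{P(\alpha)}q^{-N(\alpha)}\iota$ by Proposition~\ref{prop-linear maps}(3), so this reduces to showing that the anti-automorphism $\sigma$ preserves $\mathcal U^*_{\mathcal A}$ and is upper-triangular on the dual PBW basis with $1$'s on the diagonal; that is exactly the technical input \cite{CHW} establishes before running the algorithm, and you have not addressed it. Once one knows $\iota(E^*_{\mathbf i})-E^*_{\mathbf i}\in\sum_{\mathbf j\prec\mathbf i}\mathcal A\,E^*_{\mathbf j}$, the Lusztig step should be carried out in the $E^*_{\mathbf j}$ basis rather than the word basis, and then property~(2) in the word basis follows automatically.

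A second, smaller issue: even at a dominant $\mathbf k$, your correction requires solving $(\alpha-\overline{\alpha})\kappa_{\mathbf k}=c_{\mathbf k}-\overline{c_{\mathbf k}}$ with $\alpha\in q\mathbb Z[q]$. Since $\kappa_{\mathbf k}$ need not be a unit in $\mathcal A$ (e.g.\ it can involve $[2]_n=-q+q^{-1}$), the existence of such an $\alpha$ is not a formality; working in the $E^*_{\mathbf j}$ basis avoids this entirely because the diagonal entries of $\iota$ there are $1$.
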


The basis $B^*= \{ b^*_{\mathbf i} \,|\, \mathbf i \in \mathcal W^+ \}$ is called the {\em dual canonical basis} for $\mathcal U^*_{\mathcal A}$.
Let $\beta\in\Phi^+$ and $n \in \mathbb Z_{\geq0}.$ Clearly, $E^*_{{\boi^+(\beta)}^n}\in \mathcal({U}^*_{\mathcal A})_{n\beta}.$ Since $\{b^*_{\bi}\mid |\bi|=n\beta\}$ is a basis of  $(\mathcal{U}^*_{\mathcal A})_{n\beta}$ and  by Corollary \ref{cor-Lyndon word power}, $\boi^+(\beta)^n$ is the smallest dominant word in $\mathcal W^+_{n\beta},$ it follows from  Theorem \ref{thm-canonical basis} that

\begin{Cor}\label{cor-b*word power}
$E^*_{{\boi^+(\beta)}^n}=b^*_{{\boi^+(\beta)}^n}$.
\end{Cor}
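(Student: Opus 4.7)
The proof is essentially an immediate consequence of Theorem \ref{thm-canonical basis}(1) once we impose the weight grading. My plan is as follows.

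Let $\mathbf{i} = \boi^+(\beta)^n$. By Theorem \ref{thm-canonical basis}(1), the difference $b^*_{\mathbf{i}} - E^*_{\mathbf{i}}$ lies in the $\mathcal{A}$-span of the PBW vectors $E^*_{\mathbf{j}}$ with $\mathbf{j} \in \mathcal{W}^+$ and $\mathbf{j} \prec \mathbf{i}$. Since $\mathcal{U}^*_{\mathcal{A}}$ is $Q^+$-graded and both $b^*_{\mathbf{i}}$ and $E^*_{\mathbf{i}}$ lie in $(\mathcal{U}^*_{\mathcal{A}})_{n\beta}$, the only relevant $\mathbf{j}$'s are those in $\mathcal{W}^+_{n\beta}$.

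The key input is then Corollary \ref{cor-Lyndon word power}: the word $\boi^+(\beta)^n$ is the \emph{smallest} dominant word of weight $n\beta$. Consequently there is no $\mathbf{j} \in \mathcal{W}^+_{n\beta}$ with $\mathbf{j} \prec \mathbf{i}$, so the expansion of $b^*_{\mathbf{i}} - E^*_{\mathbf{i}}$ in the PBW basis is empty. Therefore $b^*_{\mathbf{i}} = E^*_{\mathbf{i}}$, which is the desired identity.

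There is no real obstacle here: the argument is a one-line bookkeeping consequence of combining the triangularity property of the dual canonical basis (Theorem \ref{thm-canonical basis}(1)) with the minimality statement for $\boi^+(\beta)^n$ inside $\mathcal{W}^+_{n\beta}$ (Corollary \ref{cor-Lyndon word power}). The only thing to be careful about is to observe that the PBW-expansion lives inside a single weight space, so that the lexicographic minimality is actually applicable.
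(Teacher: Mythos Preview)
Your proof is correct and follows essentially the same approach as the paper: both combine the triangularity property from Theorem~\ref{thm-canonical basis}(1) with the minimality of $\boi^+(\beta)^n$ in $\mathcal W^+_{n\beta}$ from Corollary~\ref{cor-Lyndon word power}, restricted to the weight space $(\mathcal U^*_{\mathcal A})_{n\beta}$, to conclude that the triangular expansion is empty.
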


%Moreover, as in \cite[Theorem 40]{Lec} we have the following.

%\begin{Prop}
%The coefficient of the word $\bi$ in $\b^*_{\bi}, \bi\in\bW^+$ is $\kappa_{\bi}.$
%\end{Prop}

%\begin{proof}

%\end{proof}

\medskip

\section{Spin quiver Hecke algebras} \label{spin-quiver}

\subsection{Generators and relations}Let $\mathbb K$ be a field with $\mathrm{char}\  \mathbb K \neq 2$, and let $\widetilde \Gamma$ be a quiver with compatible automorphism $a:\widetilde \Gamma \rightarrow \widetilde \Gamma$. Denote the set of vertices of $\widetilde \Gamma$ by $\widetilde I$ and the set of edges by $\widetilde H$. We have maps $s: \widetilde H \rightarrow \widetilde I$ and $t:\widetilde H \rightarrow \widetilde I$ such that $s(a(h))=a(s(h))$ and $t(a(h))=a(t(h))$ for all $h \in \widetilde H$. 
 Set $I$ to be a set of representatives of the orbits of  $\widetilde I$ under $a$ and let $\Gamma = \widetilde \Gamma/a$ be the Dynkin diagram with nodes labeled by $I$, assuming $\Gamma$ has no loops. For each $i \in I$, let $\alpha_i \in \widetilde I /a$ be the corresponding orbit.  For $i\neq j$, we set
\[ (\alpha_i, \alpha_i)=2|\alpha_i| \qquad \text{ and } \qquad (\alpha_i, \alpha_j)=-|\{ (i',j')\in \widetilde H| i' \in \alpha_i , j' \in \alpha_j\}| .\]
Then we obtain a generalized Cartan matrix $A=(a_{ij})$ and a matrix $D=\mathrm{diag}(s_1, \ldots , s_n)$ by setting $s_i=|\alpha_i|$ and $a_{ij} = (\alpha_i, \alpha_j)/s_i$. Note that $DA$ is symmetric.

Now we assume that the matrix $A$ is the same as in \eqref{eqn-ma}, and put the same $\mathbb Z/2\mathbb Z$-grading on $I$, i.e. $I_{\bar 1}=\{n\}$. The orbit $\alpha_i$ is to be identified with the simple root $\alpha_i$ of the Kac-Moody superalgebra $\mathfrak g$ associated to $A$, and we keep all the notations in the previous section. 

Define $d_{ij}=| \{ h \in \widetilde H | s(h) \in \alpha_i \text{ and } t(h) \in \alpha_j \} /a |$ for $i \neq j$. For $i, j \in I$, set
\[ \mathbb K_{ij}\{u,v\}= \mathbb K \langle u, v\rangle/\langle uv-(-1)^{p(i)p(j)}vu \rangle,  \]  
and define $Q_{ii}(u,v)=0$ and  \[ Q_{ij}(u,v)=(-1)^{d_{ij}}(u^{2/s_i}-v^{2/s_j}) \in \mathbb K_{ij}\{ u,v\} \qquad \text{ for }i\neq j.\]
Assume $\nu =\sum_{i\in I}c_i \alpha_i \in Q^+$ with $\sum_{i \in I} c_i=d$. Set
\[I^\nu=\{ \mathbf i =(i_1, \dots, i_d) \in I^d \, | \, \alpha_{i_1}+\cdots + \alpha_{i_d}=\nu \} .\] The symmetric groups $S_d$ acts on $I^\nu$ by place permutations; in particular, the transposition $s_r$ acts as \[s_r \cdot (i_1, \dots, i_r, i_{r+1}, \dots, i_d) = (i_1, \dots, i_{r+1}, i_r, \dots, i_d). \]
 
The $\mathbb K$-algebra $\mathcal H^-(\nu)$ with the identity $1_\nu$ is defined by the generators $e(\mathbf i)$ ($\mathbf i \in I^\nu$), $y_r$ ($r=1, \dots, d$), $\tau_s$ ($s=1, \dots, d-1$) satisfying the following relations: 
\begin{equation}
e(\mathbf i) e(\mathbf j)=\delta_{\mathbf i \mathbf j}e(\mathbf i)\;\text{for all}\; \mathbf i, \mathbf j \in I^\nu;\quad
\sum_{\mathbf i\in I^\nu} e(\mathbf i)= 1_\nu; \label{eq_relations1}
\end{equation}
\begin{equation}
y_r e(\mathbf i)=e(\mathbf i) y_r;\label{eq_relations2}
\end{equation}
\begin{equation}\tau_re(\mathbf i)=e(s_r \cdot \mathbf i) \tau_r;\label{eq_relations3}
\end{equation} 
\begin{equation}y_ry_se(\mathbf i)=(-1)^{p(i_r)p(i_s)}y_sy_re(\mathbf i)\;\;{\rm for}\;\;r \neq s;  \label{eq_relations4}
\end{equation} 
\begin{equation}\tau_r y_s e(\mathbf i)=(-1)^{p(i_r)p(i_{r+1})p(i_s)}y_s \tau_r e(\mathbf i)\;\;{\rm  for}\;\; s \neq r, r+1;\label{eq_relations5}
\end{equation}
\begin{equation}\tau_r \tau_s e(\mathbf i)=(-1)^{p(i_r)p(i_{r+1})p(i_s)p(i_{s+1})}\tau_s \tau_r e(\mathbf i)\;\;{\rm for}\;\;|s-r|>1;\label{eq_relations6}
\end{equation} 
\begin{equation}\tau_r y_{r+1} e(\mathbf i)=\begin{cases} ((-1)^{p(i_r)p(i_{r+1})}y_r \tau_r +1) e(\mathbf i) & \text{if } i_r=i_{r+1}, \\ (-1)^{p(i_r)p(i_{r+1})}y_r \tau_r e(\mathbf i) & \text{if } i_r \neq i_{r+1}; 
\end{cases}\label{eq_relations7}
\end{equation} 
\begin{equation}y_{r+1}\tau_r  e(\mathbf i)=\begin{cases} ((-1)^{p(i_r)p(i_{r+1})} \tau_ry_r +1) e(\mathbf i) & \text{if } i_r=i_{r+1}, \\ (-1)^{p(i_r)p(i_{r+1})} \tau_r y_r e(\mathbf i) & \text{if } i_r \neq i_{r+1};
\end{cases}\label{eq_relations8}
\end{equation} 
\begin{equation}\tau_r^2 e(\mathbf i)=Q_{i_r, i_{r+1}}(y_r, y_{r+1})e(\mathbf i);\label{eq_relations9}
\end{equation} 
\begin{equation}\label{eq_relations10}
\begin{array}{ll}
(\tau_r \tau_{r+1} \tau_r -\tau_{r+1} \tau_r \tau_{r+1})e(\mathbf i)  \\ = \begin{cases}
\left ( \frac { Q_{i_r, i_{r+1}}(y_{r+2}, y_{r+1}) - Q_{i_r, i_{r+1}}(y_{r}, y_{r+1})} {y_{r+2}-y_r} \right) e(\mathbf i) & \text{if }i_r =i_{r+2} \neq n, \\ (-1)^{p(i_{r+1})} (y_{r+2}-y_r)\left ( \frac { Q_{i_r, i_{r+1}}(y_{r+2}, y_{r+1}) - Q_{i_r, i_{r+1}}(y_{r}, y_{r+1})} {y^2_{r+2}-y^2_r} \right) e(\mathbf i)
& \text{if } i_r =i_{r+2}=n, \\
0 & \text{otherwise.}
\end{cases}
\end{array}
\end{equation}

Now the {\em spin quiver Hecke algebra} is defined to be 
$\displaystyle\mathcal H^-=\bigoplus_{\nu \in Q^+} \mathcal H^-(\nu)$. We define a $\mathbb Z$-grading on $\mathcal H^-$ by $\deg e(\mathbf i) =0$, $\deg y_r e(\mathbf i) =(\alpha_{i_r}, \alpha_{i_r})$ and $\deg \tau_r e(\mathbf i) = - (\alpha_{i_r}, \alpha_{i_{r+1}})$, 
and a $\mathbb Z/2 \mathbb Z$-grading by
$p(e(\mathbf i))=\bar 0$, $p(y_r e(\mathbf i))=p(i_r)$ and $p(\tau_r e(\mathbf i))=p(i_r)p(i_{r+1})$.

\subsection{Module categories} Let $\rm{Mod}^-(\nu)$ be the abelian category of finitely generated $(\mathbb Z \times \mathbb Z/2\mathbb Z)$-graded left $\mathcal H^-(\nu)$-modules. We write $\mathrm{Hom}_\nu$ for $\mathrm{Hom}_{\mathcal H^-(\nu)}$. For any $M \in \rm{Mod}^-(\nu)$, define its {\em $q$-super dimension} by 
\[ \dim^-_q M=\sum_{k \in \mathbb Z} ( \dim M_{\bar 0}[k]-\dim M_{\bar 1}[k])q^k \in \mathbb Z((q)),\] 
and define the {\em graded character} by
\[ \mathrm{ch}^-_q M=\sum_{\mathbf i \in I^\nu} (\dim^-_q e(\mathbf i)M)\, \mathbf i .\] The parity shift functor $\Pi : \rm{Mod}^-(\nu) \rightarrow \rm{Mod}^-(\nu)$ is defined by $(\Pi M)_{\bar 0} =M_{\bar 1}$ and $(\Pi M)_{\bar 1} = M_{\bar 0}$.   We denote by $M\{m\}$ the same $\mathcal H^-(\nu)$-module $M$ with the $\mathbb Z$-grading shifted by $m \in \mathbb Z$, i.e.  $M\{m\}[k]=M[k-m]$ for $k \in \mathbb Z$.  Then the grading shift functor $q: \rm{Mod}^-(\nu) \rightarrow \rm{Mod}^-(\nu)$ is defined by $qM=M\{1\}$.

Set $\mathrm{Hom}^-_\nu(M,N)= \mathrm{Hom}_\nu (M,N) \oplus \mathrm{Hom}_\nu(M,\Pi N)$ with $\mathbb Z/2 \mathbb Z$-grading given by \[ \mathrm{Hom}^-_\nu(M,N)_{\bar 0}=\mathrm{Hom}_\nu(M,N) \quad \text{ and } \quad \mathrm{Hom}^-_\nu(M,N)_{\bar 1}=\mathrm{Hom}_\nu(M, \Pi N),\]
and define \[ \mathrm{HOM}^-_\nu(M,N)=\bigoplus_{m \in \mathbb Z} \mathrm{Hom}^-_\nu(M, \Pi^m N\{m\}). \]

Let $\mathcal A=\mathbb Z[q, q^{-1}]$ as before. The full subcategory of $\rm{Mod}^-(\nu)$ consisting of finite dimensional (resp. finitely generated projective) modules is denoted by $\rm{Rep}^-(\nu)$ (resp. $\rm Proj^-(\nu)$), and the corresponding Grothendieck group by $[\rm Rep^-(\nu)]$ (resp.  $[\rm Proj^-(\nu)]$). The functors $\Pi$ and $q$ define $\mathcal A$-module structures on both $\rm Rep^-(\nu)$ and $\rm Proj^-(\nu)$ via $q[M]=[qM]$ and $-[M]=[\Pi M]$. 

There is a unique $\mathbb K$-linear anti-automorphism $\psi: \mathcal H^-(\nu) \rightarrow \mathcal H^-(\nu)$ defined by $\psi(\mathbf i) =\mathbf i$, $\psi(y_r)=y_r$ and $\psi(\tau_s)=\tau_s$ for all $\mathbf i \in I^\nu$ and $1 \le r \le d$, $1\le s <d$. For a graded right $\mathcal H^-(\nu)$-module $M$, we define $M^\psi$ to be the left module with the action given by $x. m=m. \psi(x)$ for $m\in M$ and $x \in \mathcal H^-(\nu)$. Similarly, for a graded left $\mathcal H^-(\nu)$-module, we denote by the same notation $M^\psi$ the right module with the action twisted by $\psi$.
Define $P^{\#}=\mathrm{HOM}^-_\nu(P, \mathcal H^-(\nu))^\psi$ for $P \in \rm Proj^-(\nu)$, and the $\mathbb Z$-linear bar-involution on $[\rm Proj^-(\nu)]$ by $\overline q=-q^{-1}$ and $\overline{[P]}=[P^{\#}]$.  We define a bilinear form $(\cdot,\cdot): [{\rm Proj}^-(\nu)] \times [{\rm Proj}^-(\nu)] \rightarrow \mathbb Z((q))$ by
\[ ([P], [Q]) = \dim^-_q(P^\psi \otimes_{\mathcal H^-} Q) = \dim^-_q \mathrm{HOM}^-_\nu(P^{\#}, Q) .\]

For $M \in \rm Rep^-(\nu)$, we define its graded dual $M^\circledast=\mathrm{HOM}^-_{\mathbb K}(M, \mathbb K)^{\psi}$ 
with the $\mathcal H^-(\nu)$-action given by $(x.f)(m)=f(\psi(x).m)$ for $x \in \mathcal H^-(\nu)$, $f \in M^\circledast$ and $m\in M$, where we set $\mathbb K =\mathbb K_{\bar 0}$.  Then we obtain $M^\circledast \in \rm Rep^-(\nu)$. A bar-involution on $[\rm Rep^-(\nu)]$ is defined by $\overline q =-q^{-1}$ and $\overline{[M]}=[M^\circledast]$. 
Define  an $\mathcal A$-pairing
$\langle \cdot, \cdot \rangle: [ \rm Proj^-(\nu)] \times [\rm Rep^-(\nu)] \longrightarrow \mathcal A$ by \[ \langle [P], [M] \rangle = \dim^-_q \mathrm{HOM}^-_\nu(P^{\#}, M). \] 
For each irreducible representation $L \in \rm Rep^-(\nu)$, there exists a projective indecomposable cover $P_L \in \rm Proj^-(\nu)$, which is dual to $L$ with respect to the pairing. Every element of $\rm Proj^-(\nu)$ is a direct sum of indecomposable representations of the form $P_L\{ m\}$ for some irreducible $L$ and $m \in \mathbb Z$. Thus the pairing $\langle \cdot, \cdot \rangle$ is a perfect pairing. 

\subsection{Induction and restriction}Let $\mu, \nu \in Q^+$, and set $1_{\mu, \nu}=\displaystyle{\sum_{\mathbf i \in I^\mu,\, \mathbf j \in I^\nu} e (\mathbf i \mathbf j) }$. We have the natural embedding $\mathcal H^-(\mu) \otimes \mathcal H^-(\nu) \hookrightarrow \mathcal H^-(\mu+\nu)$. Define the functor $\rm Res^{\mu+\nu}_{\mu, \nu} : Mod^-(\mu+\nu) \rightarrow Mod^-(\mu) \otimes Mod^-(\nu)$ by $\mathrm{Res}^{\mu+\nu}_{\mu, \nu} M=1_{\mu,\nu} M$, and the functor $\rm Ind^{\mu+\nu}_{\mu, \nu} :  Mod^-(\mu) \otimes Mod^-(\nu) \rightarrow Mod^-(\mu+\nu)$ by \[\mathrm{Ind}^{\mu+\nu}_{\mu,\nu}(M\otimes N)=\mathcal H^-(\mu+\nu) 1_{\mu,\nu}  \bigotimes_{\mathcal H^-(\mu) \otimes \mathcal H^-(\nu)} (M \boxtimes N).\]
Then we obtain the functors \[ \rm Ind=\bigoplus_{\mu,\nu} Ind^{\mu+\nu}_{\mu,\nu}\quad \text{ and } \quad \rm Res=\bigoplus_{\substack{\lambda, \mu, \nu \\ \mu+\nu=\lambda}}Res^{\lambda}_{\mu,\nu} .\]
Set $\rm [Proj^-]=\bigoplus_{\nu \in Q^+} [Proj^-(\nu)]$ and $\rm [Rep^-]=\bigoplus_{\nu \in Q^+} [Rep^-(\nu)]$. Then $[\rm Ind]$ defines a multiplication on $[\rm Proj^-]$ to make it an $\mathcal A$-algebra. Similarly, $[\rm Rep^-]$ becomes  an $\mathcal A$-algebra with  $\rm[Ind]$. Furthermore,  $[\rm Res]$ defines a comultiplication on both $[\rm Proj^-]$  and $[\rm Rep^-]$ to make them $\mathcal A$-coalgebras. 

\subsection{Categorification of $\mathcal U_{\mathcal A}$ and $\mathcal U^*_{\mathcal A}$} 

\begin{Thm}[\cite{HW,KKO14}] \label{thm-iso}
There exists a $\mathbb Z \times \mathbb Z/2\mathbb Z$-graded $\mathcal A$-algebra isomorphism $\gamma: \mathcal U_{\mathcal A} \xrightarrow{\sim} [\mathrm{Proj}^-]$  commuting with  the bar-involutions on $\mathcal U_{\mathcal A}$ and $\rm [Proj^-]$. 
\end{Thm}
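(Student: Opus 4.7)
The plan is to build $\gamma$ on Chevalley generators and then promote it to an algebra map whose action on bases is controlled. First I would send each generator $i \in I \subset \mathcal U_{\mathcal A}$ to the class $[\mathcal H^-(\alpha_i)] \in [\mathrm{Proj}^-(\alpha_i)]$ and extend multiplicatively using $[\mathrm{Ind}]$ as the product on $[\mathrm{Proj}^-]$. The central task is then to verify that the images satisfy the defining (quantum Serre) relations of $U_q^-(\mathfrak g)$, so that $\gamma$ descends from the free superalgebra to $\mathcal U_{\mathcal A}$.

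To carry out the Serre verification, I would use the defining relations \eqref{eq_relations1}--\eqref{eq_relations10} to construct, for each pair $i \neq j$, an explicit filtration on $\mathrm{Ind}\bigl(\mathcal H^-(\alpha_i)^{\otimes (1-a_{ij})} \otimes \mathcal H^-(\alpha_j)\bigr)$ whose successive quotients produce the cancellation required by the quantum Serre identity in $[\mathrm{Proj}^-]$. The key conceptual input is that $[\mathrm{Ind}]$ intertwines with the shuffle product $\diamond$ of \eqref{ind} under the super character $\mathrm{ch}^-_q$, so relations in $[\mathrm{Proj}^-]$ can be detected inside $(\mathcal F, \diamond)$. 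Throughout, signs arising from \eqref{eq_relations4}--\eqref{eq_relations8} and the twisted braid relation \eqref{eq_relations10} must be matched against the parity factor $(-1)^{p(i)p(j)}$ appearing in $\diamond$; this is engineered by inserting the parity shift $\Pi$ whenever a transposition of odd factors occurs.

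To upgrade $\gamma$ to an isomorphism I would match $\mathcal A$-bases on both sides. On the algebra side we have the PBW basis $\{E_{\mathbf i} \mid \mathbf i \in \mathcal W^+\}$ of $\mathcal U_{\mathcal A}$, while on the categorical side, for each dominant word $\mathbf i = \mathbf i_1^{n_1} \cdots \mathbf i_d^{n_d}$ one inductively builds a projective $P_{\mathbf i}$ by inducing cuspidal projectives attached to the Lyndon factors $\mathbf i_k \in \mathcal L^+$; a triangularity argument with respect to $\prec$, combined with Corollary \ref{cor-top coefficient} to pin down the leading coefficients of iterated shuffles, identifies $\gamma(E_{\mathbf i})$ with $[P_{\mathbf i}]$ up to a unit in $\mathcal A$, which yields both injectivity and surjectivity. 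Bar-compatibility is then essentially formal: the anti-automorphism $\psi$ fixes each generator $\mathbf i$, and the involution $[P]\mapsto [P^{\#}]$ on $[\mathrm{Proj}^-]$ was defined precisely to match the bar-involution on $\mathcal U_{\mathcal A}$. The hard part is the Serre check, especially at the isotropic node $i=n$, where \eqref{eq_relations10} takes its twisted super form with the denominator $y_{r+2}^2-y_r^2$; getting the signs and degree shifts consistently right across all cases is the delicate bookkeeping carried out in \cite{HW} and \cite{KKO14}.
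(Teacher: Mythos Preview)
The paper does not prove this theorem at all: it is stated as a citation of \cite{HW,KKO14} and used as a black box. There is therefore no ``paper's own proof'' to compare your proposal against.

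As to your sketch itself, two remarks. First, within the logical structure of this paper your argument would be circular: you invoke cuspidal projectives attached to Lyndon factors and Corollary~\ref{cor-top coefficient}, but in this paper the Lyndon/cuspidal machinery is developed \emph{after} Theorem~\ref{thm-iso} and, via Corollary~\ref{cor-iso}, ultimately relies on it. Second, the actual proofs in \cite{HW,KKO14} do not proceed by matching PBW bases built from cuspidal projectives; they verify the quantum Serre relations directly on the categorical side (using divided-power idempotents and explicit complexes/filtrations in the spin quiver Hecke algebra), and then obtain bijectivity by a rank comparison of graded pieces rather than a triangularity argument in the Lyndon order. Your outline of the Serre-relation step and of bar-compatibility is in the right spirit, but the ``upgrade to an isomorphism via PBW'' step is not how the cited references argue, and would not be available at this point in the present paper.
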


\begin{Cor} \label{cor-iso}
The induced map $\gamma^*: [\mathrm{Rep}^-] \xrightarrow{\sim} \mathcal U^*_{\mathcal A}$ is an  isomorphism of $\mathbb Z \times \mathbb Z/2\mathbb Z$-graded $\mathcal A$-algebras. 
\end{Cor}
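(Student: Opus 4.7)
The plan is to derive $\gamma^*$ from $\gamma$ by dualizing with respect to two natural perfect pairings. On the algebra side, the nondegenerate bilinear form $(\cdot,\cdot)$ from Proposition \ref{prop-bi} restricts to a perfect $\mathcal A$-pairing $\mathcal U_{\mathcal A}\times\mathcal U^*_{\mathcal A}\to\mathcal A$, by the very definition
\[ \mathcal U^*_{\mathcal A}=\{v\in\mathcal U\mid (u,v)\in\mathcal A\text{ for all }u\in\mathcal U_{\mathcal A}\}. \]
On the categorical side, the pairing $\langle\cdot,\cdot\rangle:[\mathrm{Proj}^-]\times[\mathrm{Rep}^-]\to\mathcal A$ is perfect (as noted before the subsection on induction/restriction), with $[\mathrm{Rep}^-]$ identified as the $\mathcal A$-dual of $[\mathrm{Proj}^-]$ via the projective indecomposable covers $P_L$.

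The first step is to observe that $\gamma$ intertwines these two pairings, up to the identifications of the duals. This uses not only that $\gamma$ is an algebra isomorphism (Theorem \ref{thm-iso}) but that it is in fact a bialgebra isomorphism: the coproduct $\Delta$ on $\mathcal U_{\mathcal A}$ corresponds to the comultiplication $[\mathrm{Res}]$ on $[\mathrm{Proj}^-]$. This coalgebra compatibility is part of the standard categorification package and can be either cited from \cite{HW,KKO14} or checked directly on the generators $i\in I$, where both pairings reduce to $(i,j)=\delta_{ij}$ and both coproducts are primitive on the degree-one generators.

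Given this intertwining, define $\gamma^*:[\mathrm{Rep}^-]\to\mathcal U^*_{\mathcal A}$ to be the unique $\mathcal A$-linear map satisfying
\[ (x,\gamma^*[M])=\langle\gamma(x),[M]\rangle \qquad\text{for all }x\in\mathcal U_{\mathcal A},\;[M]\in[\mathrm{Rep}^-]. \]
Nondegeneracy of both pairings together with bijectivity of $\gamma$ makes $\gamma^*$ an isomorphism of graded $\mathcal A$-modules. Multiplicativity of $\gamma^*$ is then formal: the shuffle product on $\mathcal U^*_{\mathcal A}$ is dual to $\Delta$ under $(\cdot,\cdot)$ by Proposition \ref{prop-bi}(3), while $[\mathrm{Ind}]$ on $[\mathrm{Rep}^-]$ is dual to $[\mathrm{Res}]$ on $[\mathrm{Proj}^-]$ under $\langle\cdot,\cdot\rangle$ by Frobenius/Mackey reciprocity; since $\gamma$ intertwines $\Delta$ with $[\mathrm{Res}]$, $\gamma^*$ must intertwine $[\mathrm{Ind}]$ with $\diamond$. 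Preservation of the $\mathbb Z\times\mathbb Z/2\mathbb Z$-grading is inherited from $\gamma$, since all pairings, dimensions, and $\mathrm{HOM}^-$ constructions are defined homogeneously with signs built in.

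The main obstacle is the bialgebra compatibility of $\gamma$, i.e.\ matching $\Delta$ with $[\mathrm{Res}]$ (equivalently, matching the two pairings), together with careful bookkeeping of the $\mathbb Z/2\mathbb Z$-grading signs that appear in the definitions of $\dim^-_q$, $\mathrm{HOM}^-$, and the coproduct on $\mathcal F\otimes\mathcal F$. Everything else is a formal consequence of duality for bialgebras equipped with perfect pairings.
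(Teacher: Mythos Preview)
Your proposal is correct and is exactly the standard duality argument one expects here; the paper itself gives no proof for this corollary, treating it as immediate from Theorem~\ref{thm-iso} and the cited references \cite{HW,KKO14}. Your write-up simply makes explicit the bialgebra duality (Proposition~\ref{prop-bi}(3) on one side, Frobenius reciprocity on the other) that the paper leaves to the reader, and correctly flags that the coalgebra compatibility of $\gamma$ is not stated in Theorem~\ref{thm-iso} as written but is part of the categorification results in the cited sources.
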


%The $q$-character map $\mathrm{ch}_q^-:\mathrm{Rep}^-\to \mathcal U^*_{\mathcal A}$ gives rise to an $\mathcal A$-linear map
%\[\mathrm{ch}_q^-:[\mathrm{Rep}^-]\to \mathcal U^*_{\mathcal A}\]

For  $M \in \rm{Mod}^-(\mu)$ and $ N \in  \rm{Mod}^-(\nu)$, we define \[M \circ N := \mathrm{Ind}^{\mu+\nu}_{\mu,\nu}(M\boxtimes N)\]%=\mathcal H^-(\mu+\nu) 1_{\mu,\nu}  \bigotimes_{\mathcal H^-(\mu) \otimes \mathcal H^-(\nu)} (M \boxtimes N).\]

\begin{Prop}\label{cor-char}
Let $\mu, \nu\in Q^+,$ $M\in \mathrm{Rep}^-(\mu)$ and $N\in \mathrm{Rep}^-(\nu).$ Then we have \[\mathrm{ch}^-_q(M\circ N)=\mathrm{ch}^-_q(N)\diamond \mathrm{ch}^-_q(M).\]
\end{Prop}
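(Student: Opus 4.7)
The plan is to compute the multiplicity $\dim^-_q e(\mathbf k)(M \circ N)$ for each word $\mathbf k \in I^{\mu+\nu}$ in the graded character of the induced module and compare it, term by term, with the corresponding coefficient in $\mathrm{ch}^-_q(N)\diamond \mathrm{ch}^-_q(M)$. First I would invoke the Mackey-type basis theorem for spin quiver Hecke algebras (established in \cite{KKT} and used in \cite{HW,KKO14}): the bimodule $\mathcal H^-(\mu+\nu)\,1_{\mu,\nu}$ is free as a right $\mathcal H^-(\mu)\otimes \mathcal H^-(\nu)$-module with basis $\{\tau_w\, 1_{\mu,\nu}\}$ indexed by the minimal-length $(|\mu|,|\nu|)$-shuffles $w \in S_{|\mu|+|\nu|}$. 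Consequently, for any $\mathbf k \in I^{\mu+\nu}$, there is a $\mathbb K$-linear decomposition
\[
e(\mathbf k)(M\circ N) \;=\; \bigoplus_{(w,\mathbf i,\mathbf j)} \tau_w \, e(\mathbf i \mathbf j)\otimes (M \boxtimes N),
\]
summed over triples with $\mathbf i \in I^\mu$, $\mathbf j \in I^\nu$, and $w\cdot(\mathbf i \mathbf j) = \mathbf k$, where each summand is isomorphic, up to bidegree shift, to $e(\mathbf i) M \otimes e(\mathbf j) N$.

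Next I would track the bidegree produced by $\tau_w$ on $e(\mathbf i \mathbf j)$. Writing $w$ as a reduced product of simple transpositions and applying \eqref{eq_relations3} together with the conventions $\deg\tau_r e(\mathbf i) = -(\alpha_{i_r}, \alpha_{i_{r+1}})$ and $p(\tau_r e(\mathbf i)) = p(i_r)p(i_{r+1})$, each inversion of $w$ corresponds to a unique pair with an entry of $\mathbf i$ being swapped past an entry of $\mathbf j$. This yields the formula
\[
\dim^-_q e(\mathbf k)(M \circ N) \;=\; \sum_{(w,\mathbf i,\mathbf j)} \!\!\!\prod_{(a,b)\in\mathrm{Inv}(w)} \!(-1)^{p(\mathbf i_a)p(\mathbf j_b)} q^{-(\alpha_{\mathbf i_a}, \alpha_{\mathbf j_b})} \cdot \dim^-_q(e(\mathbf i) M)\,\dim^-_q(e(\mathbf j) N) .
\]
Finally I would expand
\[
\mathrm{ch}^-_q(N) \diamond \mathrm{ch}^-_q(M) \;=\; \sum_{\mathbf j,\mathbf i} \bigl(\dim^-_q e(\mathbf j)N\bigr)\bigl(\dim^-_q e(\mathbf i)M\bigr)\, (\mathbf j \diamond \mathbf i)
\]
and verify by induction on $|\mathbf k|$ using the recursion \eqref{ind} that the coefficient of $\mathbf k$ in $\mathbf j \diamond \mathbf i$ is precisely the same sum over shuffles $w$ realizing $\mathbf k$ from $\mathbf i \mathbf j$. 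Matching the two expressions closes the argument.

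The main obstacle is the bidegree tracking in the second step: one must be certain that when $\mathbf i \mathbf j$ has repeated letters, the action of $\tau_w$ on $e(\mathbf i\mathbf j)$ is truly by the claimed scalar and does not produce additional contributions from $\tau_r^2 = Q_{i_r, i_{r+1}}(y_r, y_{r+1})$ or from the inhomogeneous pieces of \eqref{eq_relations7}--\eqref{eq_relations8}. This is ensured by choosing $w$ to be the minimal-length representative, so that no reduced expression for $w$ applies a simple transposition to a pair $i_r = i_{r+1}$ within the word $\mathbf i \mathbf j$ it is being moved through; once this is in place, the sign and $q$-exponent computation recovers exactly the super-shuffle recursion \eqref{ind}, and the matching becomes purely combinatorial.
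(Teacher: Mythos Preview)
Your approach coincides with the paper's: both invoke the basis theorem that $\mathcal H^-(\mu+\nu)1_{\mu,\nu}$ is free over $\mathcal H^-(\mu)\otimes\mathcal H^-(\nu)$ on the elements $\tau_w 1_{\mu,\nu}$ for minimal coset representatives $w$, track the bidegree contributed by each $\tau_w e(\mathbf i\mathbf j)$, and match against the shuffle recursion \eqref{ind} by induction on length. The paper phrases the reduction as the identity $\sum_w c(\tau_w)\,w(\mathbf i\mathbf j)=\mathbf j\diamond\mathbf i$ with $c(\tau_w)=(-1)^{p(\tau_w e(\mathbf i\mathbf j))}q^{\deg(\tau_w e(\mathbf i\mathbf j))}$, which is exactly your inversion-product formula.

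One correction to your final paragraph: it is \emph{not} true that a minimal-length $(|\mu|,|\nu|)$-shuffle never applies a simple transposition at a position with $i_r=i_{r+1}$; already for $\mathbf i=(1)$, $\mathbf j=(1)$ the nontrivial representative $w=s_1$ does exactly that. The correct reason your obstacle evaporates is the basis theorem itself: since $\{\tau_w 1_{\mu,\nu}\}_w$ is a free basis, the map $v\mapsto \tau_w\otimes v$ is automatically a bidegree-shifting linear isomorphism onto its summand, with the shift read off from $\deg(\tau_w e(\mathbf i\mathbf j))$ and $p(\tau_w e(\mathbf i\mathbf j))$; no relations among the $\tau_r$'s are ever applied in forming the element $\tau_w\otimes v$. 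With that justification corrected, your argument is complete and essentially identical to the paper's.
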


\begin{proof}
Choose a homogeneous basis $\{ v_1, \dots , v_k \}$ for $e(\bi)M$ and $\{ u_1, \dots , u_l \}$ for $e(\bj)N$. Then we obtain a basis $\{ \tau_w v_p \otimes u_q : 1 \le p \le k, \ 1 \le q \le l , w \in S_{a+b}/S_a \times S_b \}$ of the homogeneous space of $M \circ N$. 
One can see that we need only to prove \[ \sum_{w \in S_{a+b}/S_a \times S_b} c(\tau_w) w (\bi \bj) =   \bj \diamond \bi , \] where $c(\tau_w)=(-1)^{p(\tau_we(\bi \bj))} q ^{\deg (\tau_w e(\bi \bj))}$.

We use the inductive formula \eqref{ind}. Consider $\bi=(i_1, \dots , i_a)$ and $\bj=(i_{a+1}, \dots , i_{a+b})$. Then we have 
\begin{align*} & \sum_{w \in S_{a+b}/S_a \times S_b} c(\tau_w) w (\bi \bj) \\ &= \left ( \sum_{w \in S_{(a-1)+b}/S_{(a-1)} \times S_b} c (\tau_w) \right ) c(\tau_{a+b-1} \cdots \tau_{a+1} \tau_a) w \tau_{a+b-1} \cdots \tau_{a+1} \tau_a (\bi \bj) \\ &  \qquad + \sum_{w \in S_{a+(b-1)}/S_a \times S_{(b-1)}} c(\tau_w) w(\bi (i_{a+1}, \dots , i_{a+b-1}))i_{a+b}.\end{align*}

On the other hand, we have
\begin{align*} & \bj \diamond \bi = (i_{a+1}, \dots , i_{a+b}) \diamond (i_1, \dots , i_a) \\ & = (-1)^{p(xi_{a+b})p(i_a)}q^{-(|xi_{a+b}|, |i_a|)} ((i_{a+1}, \dots , i_{a+b}) \diamond (i_1, \dots , i_{a-1})   ) i_a \\ & \qquad + ((i_{a+1}, \dots , i_{a+b-1})\diamond (i_1,\dots , i_a)    )i_{a+b},
\end{align*}
where $x=(i_{a+1}, \dots , i_{a+b-1})$. 
Since $c (\tau_{a+b-1} \cdots \tau_{a+1} \tau_a) =  (-1)^{p(xi_{a+b})p(i_a)}q^{-(|xi_{a+b}|, |i_a|)}$, we are done by induction.
\end{proof}

We have the following important property of the map $\mathrm{ch}^-_q$, which is proved in \cite[Corollary 8.16]{KKO14}.

\begin{Prop}[\cite{KKO14}] \label{injective}
Let $M, M' \in \mathrm{Rep}^-(\mu)$. If $\mathrm{ch}^-_q(M)=\mathrm{ch}^-_q(M')$, then $[M]=[M']$.
\end{Prop}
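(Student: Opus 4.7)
The plan is to deduce the injectivity of $\mathrm{ch}^-_q$ on $[\mathrm{Rep}^-(\mu)]$ directly from the categorification isomorphism $\gamma^*:[\mathrm{Rep}^-]\xrightarrow{\sim}\mathcal U^*_{\mathcal A}$ of Corollary \ref{cor-iso}. Recall that $\mathcal U^*_{\mathcal A}$ sits inside $\mathcal F_{\mathcal A}\subset\mathcal F$, and that each element of $\mathcal U^*_{\mathcal A}$ has a unique expansion in the tautological word basis $\mathcal W$ of $\mathcal F$. The central point to establish is the compatibility that for every $M\in\mathrm{Rep}^-(\mu)$, the image $\gamma^*([M])\in\mathcal U^*_{\mathcal A}$, when written in the basis $\mathcal W$, coincides with $\mathrm{ch}^-_q(M)=\sum_{\mathbf i\in I^\mu}(\dim^-_q e(\mathbf i)M)\,\mathbf i$. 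In other words, the character map factors as the injective composition $\iota\circ\gamma^*$, where $\iota:\mathcal U^*_{\mathcal A}\hookrightarrow\mathcal F$ is the natural inclusion.

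First I would verify this compatibility. Both $\gamma^*$ (built from Theorem \ref{thm-iso} through the work of Hill–Wang and Kang–Kashiwara–Oh) and the graded character are controlled by the same piece of data, namely the $\mathbb Z\times\mathbb Z/2\mathbb Z$-graded action of the idempotents $e(\mathbf i)$ on $M$. By construction of the categorification, the coefficient of $\mathbf i\in\mathcal W$ in the image $\gamma^*([M])$ is exactly the graded super-dimension of $e(\mathbf i)M$. Tracing this through the definitions is the only real step; no nontrivial computation is required, but it must be done carefully so as to rule out any normalizing factor depending on $\mathbf i$. Granting it, the proposition is immediate: if $\mathrm{ch}^-_q(M)=\mathrm{ch}^-_q(M')$, then $\gamma^*([M])=\gamma^*([M'])$ in $\mathcal U^*_{\mathcal A}$, and since $\gamma^*$ is an isomorphism by Corollary \ref{cor-iso}, we conclude $[M]=[M']$ in $[\mathrm{Rep}^-(\mu)]$.

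The main obstacle is purely bookkeeping in this compatibility step. An alternative strategy — exhibiting an $\mathcal A$-basis of $[\mathrm{Rep}^-(\mu)]$ given by the classes $[L]$ of the graded simples and arguing that these have pairwise distinct, triangularly ordered ``highest words'' — is not available at this point of the paper, since the classification of simples and the identification of their highest words with dominant words is precisely the main theorem (Theorem \ref{thm-main}) being built up in the subsequent sections. So the categorification route through $\gamma^*$ is the natural one, and in fact this is how the result is recorded in \cite[Corollary 8.16]{KKO14}.
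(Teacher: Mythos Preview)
The paper does not give its own proof of this proposition; it simply cites \cite[Corollary~8.16]{KKO14}. Your sketch --- deducing injectivity of $\mathrm{ch}^-_q$ from its identification with the categorification isomorphism $\gamma^*$ of Corollary~\ref{cor-iso} --- is precisely the argument behind that citation, so there is nothing further to compare and your proposal is correct.
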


\medskip

\section{Cuspidal representations} \label{cuspidal}

In this section we give an explicit construction of the cuspidal modules with the ordering we fixed on $I$: $1 \prec 2 \prec \dots \prec n$. These cuspidal modules will be building blocks for irreducible modules. We begin with the definition of a cuspidal module.

\begin{Def} \hfill
\begin{enumerate}
\item Let $\nu \in Q^+$ and $M \in \mathrm {Rep}^-(\nu)$. The word $\mathbf i = \max (\mathrm{ch}^-_q M)$ is called the {\em highest weight} of $M$.

\item Let $\alpha\in \Phi^{+}$. An irreducible $\mathcal H^{-}(\alpha)$-module $L$ is called {\em cuspidal} if the highest weight of $L$ is a dominant Lyndon word. 
\end{enumerate}
\end{Def}

% \begin{Lem}
% Let $\alpha\in \Phi^{+}.$ There is at most one cuspidal module in ${\rm Rep}^{-}(\alpha).$
% \end{Lem}

% \begin{proof}
% CHECK!!
% \end{proof}

The set $\Phi^+$ of reduced positive roots is $$\Phi^+=\{\alpha(i,j)\mid1\le i\le j\leq n\}\cup\{\beta(i,j)\mid 1\leq i < j\leq n\},$$ where $\alpha(i,j):=\sum_{r=i}^{j}\alpha_r$ and $\beta(i,j):=\sum_{r=i}^{j-1}\alpha_r+\sum_{r=j}^{n}2\alpha_r$. 
It follows from Proposition \ref{pro-lyn} that the corresponding dominant Lyndon words are:
\begin{align*}
\boi^{+}(\alpha(i,j))=(i,\dots, j),\quad 1\le i\le j\leq n, \quad
\boi^{+}(\beta(i,j))=(i,\dots, n,n,\dots j), \quad  1\leq i < j\leq n.
%\quad \bi^+(\gamma(r))=\sum_{k=r}^{n}2\alpha_k.
\end{align*}
The corresponding dual canonical bases elements are 
\begin{align*}
E^*_{\boi^+(\alpha(i,j))}&=(i,\dots, j) , &  1\le i\le j\leq n, \\
E^*_{\boi^+(\beta(i,j))}&=(-q+q^{-1})(i,\dots,n,n,\dots,j),&   1\leq i < j\leq n.
\end{align*}

\begin{Prop} \label{prop-cusp} 
Let $\alpha\in \Phi^+$. 
For $\alpha=\alpha(i,j)$, we have the corresponding $1$-dimensional cuspidal module $L_{\alpha}=\mathbb{K}v_{\alpha}$ with the action of the generators: 
$$
e(\bj)v_{\alpha}=\delta_{\bj,\boi^{+}(\alpha)}v_{\alpha},\quad \tau_rv_{\alpha}=y_rv_{\alpha}=0\;\; \text{\rm for all}\;\; r.
$$
For $\alpha=\beta(i,j)$, $i<j$, we have the $2$-dimensional cuspidal module 
$L_{\alpha}:=\mathbb{K} v_1\oplus\mathbb{K}v_{-1}$, where $\deg v_g=g$ for $g=\pm 1$, and $p(v_1)=\bar 1$, $p(v_{-1})=\bar 0$, and the action of generators are given by: 
\[ \begin{array}{lll}
& e(\bj)v_g=\delta_{\bj,\boi^{+}(\alpha)}v_g  & \quad \text{ for }  g=\pm 1;
\\
& y_rv_1=0   & \quad \text{ for all }  r ;
\\
& y_rv_{-1}=0  & \quad \text{ if } r\neq n-i+1, n-i+2;
\\
& y_rv_{-1}=v_1  & \quad \text{ if }  r= n-i+1, n-i+2;
\\
& \tau_r v_1=0  & \quad \text{ if } r\neq n-i+1;
\\
& \tau_{n-i+1}v_1=v_{-1}; & 
\\
& \tau_rv_{-1}=0  & \quad \text{ for all }  r.
\end{array} \]

\end{Prop}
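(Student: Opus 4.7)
The plan is to proceed in three steps: first verify that the proposed formulas satisfy the defining relations of $\mathcal H^-(\alpha)$, then establish irreducibility, and finally identify the highest weight. Since both modules are concentrated on the single idempotent $e(\boi^+(\alpha))$, relations \eqref{eq_relations1}--\eqref{eq_relations3} are essentially automatic; the only subtlety in \eqref{eq_relations3} is that for $\beta(i,j)$ the unique transposition fixing $\boi^+(\alpha)$ is $s_{n-i+1}$ (swapping the two consecutive $n$'s), which is precisely the one $r$ where $\tau_r$ has nonzero action on $L_\alpha$. In the $\alpha(i,j)$ case the word $(i,i+1,\ldots,j)$ has no repeated letters and all $y_r,\tau_r$ act by zero, so every relation reduces to $0=0$.

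For $\alpha=\beta(i,j)$, the crucial combinatorial observation I would first establish is that $i_r\neq i_{r+2}$ for every valid $r$ in $\boi^+(\beta(i,j))=(i,\ldots,n,n,\ldots,j)$: any letter $k\in\{j,\ldots,n-1\}$ recurs at positions separated by the odd number $2n-2k+1$, and the two $n$'s are consecutive. This kills the right-hand side of the braid relation \eqref{eq_relations10}, while the left-hand side vanishes as well because $\tau_{n-i+1}$ is the only $\tau_r$ with nontrivial action, so any length-three composition of $\tau$'s involving two distinct indices annihilates $v_g$. The nontrivial checks are then \eqref{eq_relations7} and \eqref{eq_relations8} at $r=n-i+1$ (the unique position with $i_r=i_{r+1}$), which follow by direct substitution using $(-1)^{p(n)^2}=-1$ together with the prescribed actions $y_{n-i+2}v_{-1}=v_1$, $\tau_{n-i+1}v_1=v_{-1}$, $y_{n-i+1}v_{-1}=v_1$. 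For \eqref{eq_relations9}: at $r=n-i+1$ both sides vanish since $Q_{n,n}=0$ and $\tau_{n-i+1}^2 v_1=\tau_{n-i+1}v_{-1}=0$; at $r\in\{n-i,n-i+2\}$, the polynomials $Q_{n-1,n}(u,v)=\pm(u-v^2)$ and $Q_{n,n-1}(u,v)=\pm(u^2-v)$ (arising from $s_n=1$) have their quadratic terms killed by $y_{n-i+1}^2 v_{-1}=y_{n-i+1}v_1=0$ and $y_{n-i+2}^2 v_{-1}=0$, so the right-hand side is zero, matching $\tau_r^2 v_g=0$ on the left. The remaining relations \eqref{eq_relations4}--\eqref{eq_relations6} reduce to $0=0$ because at most one generator has nonzero effect on any given vector.

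For irreducibility, the $\alpha(i,j)$ module is $1$-dimensional and hence trivially irreducible. For $\beta(i,j)$, any nonzero graded submodule contains (up to scalar) $v_1$ or $v_{-1}$, and the identities $\tau_{n-i+1}v_1=v_{-1}$ and $y_{n-i+1}v_{-1}=v_1$ generate the whole module from either one. Finally, since $L_\alpha=e(\boi^+(\alpha))L_\alpha$, the graded character $\mathrm{ch}^-_q L_\alpha$ is supported on the single word $\boi^+(\alpha)$, so the highest weight equals $\boi^+(\alpha)$, which is a dominant Lyndon word by Proposition \ref{pro-lyn}; therefore $L_\alpha$ is cuspidal. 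The degree and parity assignments on $v_{\pm 1}$ are forced by consistency with $\deg\tau_{n-i+1}=-(\alpha_n,\alpha_n)=-2$, $\deg y_r=(\alpha_n,\alpha_n)=2$ for $i_r=n$, and their common odd parity; a short computation then yields $\mathrm{ch}^-_q L_{\beta(i,j)}=(q^{-1}-q)\boi^+(\beta(i,j))=E^*_{\boi^+(\beta(i,j))}$, matching the dual PBW element.

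I expect the main obstacle to be the verification of \eqref{eq_relations9} at the positions bordering the two $n$'s, where the exponent $2/s_n=2$ turns a linear factor of $Q$ into a quadratic one; the cancellation relies on the specific prescription $y_{n-i+1}v_{-1}=y_{n-i+2}v_{-1}=v_1$ combined with $y_r v_1=0$, so that the operator $y^2$ squares to zero exactly as required.
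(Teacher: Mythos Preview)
Your proof is correct and follows essentially the same direct-verification approach as the paper: check that the prescribed formulas satisfy relations \eqref{eq_relations1}--\eqref{eq_relations10} (with the only nontrivial cases being \eqref{eq_relations7}--\eqref{eq_relations9} near the repeated letter $n$), then identify the highest weight via the graded character. Your organization is slightly tighter---you front-load the observation $i_r\neq i_{r+2}$ to dispose of \eqref{eq_relations10} at once, and you make the irreducibility argument explicit---but the substance is the same; one minor point is that in your treatment of \eqref{eq_relations9} at $r\in\{n-i,\,n-i+2\}$ you should also note that the linear terms $y_{n-i}v_{-1}$ and $y_{n-i+3}v_{-1}$ vanish (they do, since those indices lie outside $\{n-i+1,n-i+2\}$), not only the quadratic ones.
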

\begin{proof}
If $\alpha=\alpha(i,j)$, then it is straightforward to check that the action satisfies (\ref{eq_relations1})--(\ref{eq_relations10}). We clearly have  $\mathrm{ch}^-_q (L_\alpha)=\boi^+(\alpha)=(i,\dots, j)$. Thus $L_\alpha$ is a cuspidal representation.

Assume that $\alpha=\beta(i,j)$, $i<j$, and consider the action of generators on $L_\alpha=\mathbb{K} v_1\oplus\mathbb{K}v_{-1}$. 
Clearly the relation (\ref{eq_relations1}) holds. For $g=\pm 1$ , notice that 
\[ \begin{array}{ll} y_re(\bi)v_g=y_rv_g=e(\bi)y_r v_g , & \text{ if } \bi =\boi^+(\alpha); \\ y_r e(\bi)v_g=0=e(\bi)y_r v_g & \text{ otherwise}. \end{array} \] Thus the relation (\ref{eq_relations2}) holds.  

We have \[ \tau_r e(\bi) v_g = 
\begin{cases} v_{-1} & \text{ if } \bi=\boi^+(\alpha), \ r=n-i+1, \ g=1;  \\ 0 & \text{ otherwise}. \end{cases} \]
Since $e(s_r \cdot \bi)=e(\bi)$ for $r=n-i+1$, we get 
\begin{align*} e(s_r \cdot \bi)  \tau_r v_g & = 
\begin{cases} e(s_r \cdot \bi) v_{-1} & \text{ if }  r=n-i+1, \ g=1,  \\ 0 & \text{ otherwise}, \end{cases} \\ & = 
\begin{cases} v_{-1} & \text{ if } \bi=\boi^+(\alpha), \ r=n-i+1, \ g=1,  \\ 0 & \text{ otherwise}. \end{cases}\end{align*}
Thus $\tau_r e(\bi) v_g=e(s_r \cdot \bi)  \tau_r v_g $ for $g =\pm 1$, and the relation \eqref{eq_relations3} holds.

For the relations \eqref{eq_relations4}--\eqref{eq_relations10}, we may assume that $\bi=\boi^+(\alpha)$ and will drop $e(\bi)$ from consideration. Since $y_ry_s v_g=0$ for any $r,s$ and $g=\pm 1$, the relation \eqref{eq_relations4} is valid. For the relation \eqref{eq_relations5}, we assume that $s\neq r, r+1$. Then
\[ \tau_r y_s v_g = \begin{cases} \tau_r v_1 & \text{ if }  s=n-i+1, n-i+2,  \ g=-1;  \\ 0 & \text{ otherwise}. \end{cases} \]
Since $r \neq n-i+1$ from the assumption, we obtain $\tau_r y_s v_g=0$. Similarly, $y_s \tau_r v_g=0$, and the relation \eqref{eq_relations5} holds. Next we have  $\tau_r\tau_s v_g=0$ for any $r,s$, and the relation \eqref{eq_relations6} is valid.

Now we see
\begin{align*}   \tau_r y_{r+1} v_g & = 
\begin{cases} \tau_r v_1 & \text{ if }  r=n-i, n-i+1, \ g=-1,  \\ 0 & \text{ otherwise}, \end{cases} \\ & = 
\begin{cases} v_{-1} & \text{ if }  \ r=n-i+1, \ g=-1,  \\ 0 & \text{ otherwise}. \end{cases}\end{align*}
On the other hand, if $r=n-i+1, g=-1$, then $i_r=i_{r+1}=n$ and
\[ ((-1)^{p(i_r)p(i_{r+1})}y_r\tau_r+1)v_g=(-y_r \tau_r +1)v_{-1}=v_{-1}. \]
If $r=n-i+1$, $g=1$, then 
\[ ((-1)^{p(i_r)p(i_{r+1})}y_r\tau_r+1)v_g=(-y_r \tau_r +1)v_1=-y_rv_{-1}+v_1=-v_1+v_1=0 . \]
If $r\neq n-i+1$ then $i_r \neq i_{r+1}$ and $(-1)^{p(i_r)p(i_{r+1})}y_r\tau_r v_g=0$. 
Consequently, the relation \eqref{eq_relations7} holds.
The relation \eqref{eq_relations8} can be verified similarly, and we omit the details.

Clearly, $\tau_r^2 v_g=0$ for any $r$ and $g=\pm 1$. On the other hand, if $r\neq n-i, n-i+2$ then we obtain immediately $Q_{i_r, i_{r+1}}(y_r, y_{r+1})v_g=0$ . If $r=n-i$ then \[ Q_{i_r, i_{r+1}}(y_r, y_{r+1})v_g=Q_{n-1,n}(y_r, y_{r+1})v_g= \pm (y_r -y_{r+1}^2)v_g=0. \]
Similarly, if $r=n-i+2$ then 
\[ Q_{i_r, i_{r+1}}(y_r, y_{r+1})v_g=Q_{n,n-1}(y_r, y_{r+1})v_g= \pm (y_r^2 -y_{r+1})v_g=0. \]
Thus we see that the relation \eqref{eq_relations9} holds.
Finally, $(\tau_r \tau_{r+1} \tau_r - \tau_{r+1} \tau_r \tau_{r+1}) v_g =0$ for any $r$ and $g=\pm 1$, while $i_r\neq i_{r+2}$ for any $r$. Hence it is easy to see that the relation \eqref{eq_relations10} is valid.

Now we have shown that all the relations \eqref{eq_relations1}--\eqref{eq_relations10} are compatible with the action of the generators on the module $L_\alpha$, making it indeed an $\mathcal H^-(\alpha)$-module. Furthermore, \[ \mathrm{ch}^-_q (L_\alpha)=(-q+q^{-1})\boi^+(\alpha)=(-q+q^{-1})(i,\dots,n,n \dots,  j) . \] Thus $L_\alpha$ is a cuspidal representation for $\alpha=\beta(i,j)$, $i<j$.
\end{proof}

\begin{Cor} \label{cor-alpha}
We have $\mathrm{ch}^-_q(L_{\alpha})=E^*_{\boi^+(\alpha)}$ for $\alpha \in \Phi^+$.
\end{Cor}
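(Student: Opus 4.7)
The plan is to split into the two types of positive roots and read off the graded character directly from the explicit description of $L_\alpha$ given in Proposition \ref{prop-cusp}, then compare with the formulas for $E^*_{\boi^+(\alpha)}$ listed just before the proposition.

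First, I would treat the case $\alpha=\alpha(i,j)$. Here $L_\alpha=\mathbb K v_\alpha$ is one-dimensional with $v_\alpha$ concentrated in $\mathbb Z$-degree $0$ and $\mathbb Z/2\mathbb Z$-degree $\bar 0$ (since all $y_r$ act as zero, there is no room for a nontrivial grading), and $e(\bj)v_\alpha=\delta_{\bj,\boi^+(\alpha)}v_\alpha$. Therefore
\[
\mathrm{ch}^-_q(L_\alpha)=\dim^-_q(e(\boi^+(\alpha))L_\alpha)\,\boi^+(\alpha)=1\cdot (i,\dots,j)=E^*_{\boi^+(\alpha)}.
\]

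Next, for $\alpha=\beta(i,j)$ with $i<j$, I would use the explicit $2$-dimensional description $L_\alpha=\mathbb K v_1\oplus\mathbb K v_{-1}$ with $\deg v_g=g$, $p(v_1)=\bar 1$, $p(v_{-1})=\bar 0$. Since $e(\bj)v_g=\delta_{\bj,\boi^+(\alpha)}v_g$, only $\bj=\boi^+(\alpha)$ contributes to the graded character. Computing the $q$-super dimension from the definition,
\[
\dim^-_q e(\boi^+(\alpha))L_\alpha=\sum_{k\in\mathbb Z}\bigl(\dim L_\alpha{}_{\bar 0}[k]-\dim L_\alpha{}_{\bar 1}[k]\bigr)q^k=q^{-1}-q,
\]
the $q^{-1}$ coming from $v_{-1}$ (even, degree $-1$) and the $-q$ coming from $v_1$ (odd, degree $1$). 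Hence $\mathrm{ch}^-_q(L_\alpha)=(-q+q^{-1})(i,\dots,n,n,\dots,j)=E^*_{\boi^+(\alpha)}$, matching the formula stated above Proposition \ref{prop-cusp}.

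There is no real obstacle here: the entire content is a bookkeeping check that the $\mathbb Z$-grading and the $\mathbb Z/2\mathbb Z$-grading built into the construction of $L_\alpha$ conspire with the definition of $\dim^-_q$ to produce the sign $-q$ in front of $q$, reproducing the scalar $(-q+q^{-1})$ in $E^*_{\boi^+(\beta(i,j))}$. The only point worth emphasizing explicitly is the parity assignment $p(v_1)=\bar 1$, which is what turns $q$ into $-q$ in the super-dimension and thereby produces the correct leading sign.
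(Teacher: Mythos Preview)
Your proposal is correct and matches the paper's own reasoning: the character computations $\mathrm{ch}^-_q(L_\alpha)=(i,\dots,j)$ and $\mathrm{ch}^-_q(L_\alpha)=(-q+q^{-1})(i,\dots,n,n,\dots,j)$ are already carried out inside the proof of Proposition~\ref{prop-cusp}, and the corollary is then immediate by comparison with the listed formulas for $E^*_{\boi^+(\alpha)}$. The only cosmetic remark is that the degree and parity of $v_\alpha$ in the first case are part of the definition rather than something deduced from the vanishing of the $y_r$.
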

\medskip

\section{Standard representations} \label{standard}

In this section, we use the results of the previous sections and construct all the irreducible representations of the spin quiver Hecke algebra to obtain the main result of this paper. 

\medskip

Recall that we have the dual canonical basis $B^*=\{ b^*_{\mathbf i} \,|\, \mathbf i \in \mathcal W^+ \}$  for $\mathcal U^*_{\mathcal A}$. Denote the coefficient of $\mathbf i$ in $b^*_{\mathbf i}$ by $\kappa_{\mathbf i}$. 
\begin{Lem} \label{lem-coeff}
 Assume that the canonical factorization of $\mathbf i = \mathbf i_1^{n_1} \cdots \mathbf i_d^{n_d} \in \mathcal W_\alpha^+$ is as in \eqref{eqn-i}. Then $\displaystyle\kappa_{\mathbf i} = \prod_{k=1}^d \kappa_{\mathbf i_k}^{n_k} [ n_k]_{\bi_k}!$. 
\end{Lem}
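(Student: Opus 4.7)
The plan is to compare the coefficients of the word $\mathbf{i}$ on both sides of the triangular expansion $b^*_{\mathbf{i}} = E^*_{\mathbf{i}} + \sum_{\mathbf{j} \prec \mathbf{i}} c_{\mathbf{j}} E^*_{\mathbf{j}}$ guaranteed by Theorem \ref{thm-canonical basis}(1). The first step is to argue that $\kappa_{\mathbf{i}}$ coincides with the coefficient of $\mathbf{i}$ in $E^*_{\mathbf{i}}$ alone: for each $\mathbf{j} \prec \mathbf{i}$ one has $\max(E^*_{\mathbf{j}}) = \mathbf{j} \prec \mathbf{i}$, so $\mathbf{i}$ does not appear in $E^*_{\mathbf{j}}$. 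This maximality statement is immediate from Lemma \ref{eqn-Ei*} combined with Corollary \ref{cor-top coefficient}: $E^*_{\mathbf{j}}$ is a nonzero scalar multiple of the iterated shuffle $\mathbf{j}_D^{\diamond m_D} \diamond \cdots \diamond \mathbf{j}_1^{\diamond m_1}$ (for the canonical factorization of $\mathbf{j}$), whose top word is $\mathbf{j}$ with nonzero coefficient.

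Next, I would compute the coefficient of $\mathbf{i}$ in $E^*_{\mathbf{i}}$ directly. Since each $\mathbf{i}_k$ is a dominant Lyndon word, Theorem \ref{thm-canonical basis} gives $b^*_{\mathbf{i}_k} = E^*_{\mathbf{i}_k}$, and by the explicit formulas for $E^*$ on Lyndon words in Section \ref{PBW and dual canonical bases} we have $E^*_{\mathbf{i}_k} = \kappa_{\mathbf{i}_k}\, \mathbf{i}_k$ (a single term, with $\kappa_{\mathbf{i}_k} = 1$ or $\kappa_{\mathbf{i}_k} = [2]_n = -q+q^{-1}$). Hence the scalars $\kappa_{\mathbf{i}_k}^{n_k}$ factor out of the iterated shuffles in Lemma \ref{eqn-Ei*}, giving
\[
E^*_{\mathbf{i}} = (-1)^{\xi(\mathbf{i})} q^{s(\mathbf{i})} \Bigl(\prod_{k=1}^d \kappa_{\mathbf{i}_k}^{n_k}\Bigr) \cdot \mathbf{i}_d^{\diamond n_d} \diamond \cdots \diamond \mathbf{i}_1^{\diamond n_1}.
\]
Applying Corollary \ref{cor-top coefficient} to read off the coefficient of $\mathbf{i}$ in the shuffle produces the compensating factor $(-1)^{\xi(\mathbf{i})} q^{-s(\mathbf{i})} \prod_{k=1}^d [n_k]_{\mathbf{i}_k}!$. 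The signs $(-1)^{\xi(\mathbf{i})}$ cancel and the $q$-powers cancel, leaving exactly $\kappa_{\mathbf{i}} = \prod_{k=1}^d \kappa_{\mathbf{i}_k}^{n_k} [n_k]_{\mathbf{i}_k}!$.

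This is essentially a bookkeeping consequence of three already-established inputs: the unitriangularity of $b^*_{\mathbf{i}}$ with respect to the $E^*$-basis, the scalar nature of $E^*_{\mathbf{i}_k}$ for Lyndon $\mathbf{i}_k$, and the precise leading coefficient of an iterated shuffle computed in Corollary \ref{cor-top coefficient}. I do not anticipate any real obstacle; the only point that requires verification is the claim $\max(E^*_{\mathbf{j}}) = \mathbf{j}$, and this is already implicit in the derivation of Corollary \ref{cor-top coefficient}.
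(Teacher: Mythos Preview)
Your proposal is correct and follows essentially the same route as the paper's proof: reduce to the coefficient of $\mathbf{i}$ in $E^*_{\mathbf{i}}$ via the unitriangularity of $b^*_{\mathbf{i}}$ in the $E^*$-basis, then combine Lemma~\ref{eqn-Ei*} with Corollary~\ref{cor-top coefficient} and the fact that $E^*_{\mathbf{i}_k}=\kappa_{\mathbf{i}_k}\mathbf{i}_k$ for Lyndon $\mathbf{i}_k$. Your justification of $\max(E^*_{\mathbf{j}})=\mathbf{j}$ is slightly more explicit than the paper's, but the argument is otherwise identical.
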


\begin{proof}
As in the proofs of \cite[Proposition 39, Theorem 40]{Lec} (see also \cite[Theorem 7.11]{CHW}), we have that $\displaystyle{b^*_{\bi}=E^*_{\bi}+\sum_{\bj\prec\bi}\gamma_{\bi\bj}E^*_{\bj}.}$ Since $\mathrm{max}(E^*_{\bi})=\mathrm{max}(E_\bi)=\bi$, it suffices to compute the coefficient of $\bi$ in $E^*_{\bi}=(-1)^{\xi(\bi)}q^{s(\bi)}(E^*_{\mathbf i_d})^{\diamond n_d} \diamond \cdots \diamond (E^*_{\mathbf i_1})^{\diamond n_1}.$  Since $\bi_k\in\mathcal L^+,$ we have that $E^*_{\bi_k}=b^*_{\bi_k}$ for all $k=1,\cdots, d,$ and the coefficient of $\bi_k$ in $E_{\bi_k}^*$ is $\kappa_{\bi_k}.$ Now, by Corollary \ref{cor-top coefficient}, we have that the coefficient of $\bi$ in $\mathbf i_d^{\diamond n_d}\diamond \cdots\diamond \mathbf i_1^{\diamond n_1}$ is $(-1)^{\xi(\mathbf i)}q^{-s(\mathbf i)}\displaystyle\prod_{k=1}^d[n_{k}]_{\bi_k}!.$ Hence the coefficient of $\bi$ in $E_{\bi}^*$ is $\displaystyle\prod_{k=1}^d \kappa_{\mathbf i_k}^{n_k} [ n_k]_{\bi_k}!.$
\end{proof}

\begin{Lem} \label{lem-power}
Let $\beta \in \Phi^+$ and $n \in \mathbb Z_{>0}$. Then $L_\beta^{\circ n}$ is irreducible with highest weight $\boi^+(\beta)^n$, and $\mathrm{ch}^-_q (L_\beta^{\circ n})=(-1)^{\xi(\boi^+(\beta)^n)}q^{-s(\boi^+(\beta)^n)} b^*_{{\boi^+(\beta)}^n}$.
\end{Lem}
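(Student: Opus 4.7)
The plan is to compute the graded character of $L_\beta^{\circ n}$ explicitly and then deduce both assertions from properties of the dual canonical basis. Starting from $\mathrm{ch}^-_q(L_\beta) = E^*_{\boi^+(\beta)}$ (Corollary \ref{cor-alpha}), I would iterate Proposition \ref{cor-char} to obtain
\[
\mathrm{ch}^-_q(L_\beta^{\circ n}) \;=\; (E^*_{\boi^+(\beta)})^{\diamond n}.
\]
The canonical factorization of $\boi^+(\beta)^n$ consists of a single block ($d=1$, $\bi_1 = \boi^+(\beta)$, $n_1 = n$), so Lemma \ref{eqn-Ei*} specializes to
\[
E^*_{\boi^+(\beta)^n} \;=\; (-1)^{\xi(\boi^+(\beta)^n)}\, q^{s(\boi^+(\beta)^n)}\, (E^*_{\boi^+(\beta)})^{\diamond n}.
\]
Combining this with Corollary \ref{cor-b*word power}, which identifies $E^*_{\boi^+(\beta)^n}$ with $b^*_{\boi^+(\beta)^n}$, produces the desired character formula, and via Theorem \ref{thm-canonical basis}'s assertion that $\max(b^*_{\boi^+(\beta)^n}) = \boi^+(\beta)^n$, the highest weight statement.

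For irreducibility, I would invoke the categorification isomorphism $\gamma^*$ of Corollary \ref{cor-iso} together with the standard fact from \cite{HW,KKO14} that classes of simple $\mathcal H^-$-modules correspond, up to signs and grading/parity shifts, to dual canonical basis elements. Since the computed character exhibits $[L_\beta^{\circ n}]$ as a signed grading shift of the single class $b^*_{\boi^+(\beta)^n}$, every composition factor of $L_\beta^{\circ n}$ must coincide, up to such shifts, with the unique simple module $L_{\boi^+(\beta)^n}$ labelled by $b^*_{\boi^+(\beta)^n}$. A comparison of total graded super-dimensions via Proposition \ref{injective} then pins $L_\beta^{\circ n}$ down to a single grading/parity shift of that simple, so it is irreducible.

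The main obstacle is this last step. The character calculation is routine once the shuffle-product identities (Lemmas \ref{eqn-Ei*} and Corollary \ref{cor-b*word power}) are in place, but lifting the Grothendieck-group statement (that $[L_\beta^{\circ n}]$ equals a signed shift of a single $b^*$) to the module-level irreducibility of $L_\beta^{\circ n}$ genuinely requires the full spin-KLR categorification correspondence between simples and dual canonical basis elements, which rules out the a priori possibility that $L_\beta^{\circ n}$ is a nontrivial extension of grading/parity shifts of a single simple.
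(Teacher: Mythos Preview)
Your character computation is correct and follows the paper exactly: Corollary \ref{cor-alpha}, Proposition \ref{cor-char}, Lemma \ref{eqn-Ei*}, and Corollary \ref{cor-b*word power} are combined in the same way to yield $\mathrm{ch}^-_q(L_\beta^{\circ n}) = (-1)^{\xi(\boi^+(\beta)^n)}q^{-s(\boi^+(\beta)^n)}b^*_{\boi^+(\beta)^n}$, and the highest-weight assertion then follows from Theorem \ref{thm-canonical basis}.

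The irreducibility step, however, contains a genuine gap. You invoke a bijection between simple modules and dual canonical basis elements as a ``standard fact from \cite{HW,KKO14}'', but those references supply only the algebra isomorphism $\gamma^*$ of Corollary \ref{cor-iso}; the matching of simples with the $b^*_{\bi}$ is precisely Theorem \ref{thm-main} of the present paper, whose proof (through Proposition \ref{construction}) depends on Lemma \ref{lem-power}. So as written your argument is circular, and the ``obstacle'' you flag in your last paragraph is real for your approach.

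The paper bypasses this with a more elementary argument whose key input you did not use: Corollary \ref{cor-Lyndon word power}, asserting that $\boi^+(\beta)^n$ is the \emph{smallest} dominant word in $\mathcal W^+_{n\beta}$. Since the character of every composition factor lies in $\mathcal U^*_{\mathcal A}$, its maximal word is dominant; but every word occurring in $\mathrm{ch}^-_q(L_\beta^{\circ n})$ is $\preceq \boi^+(\beta)^n$, so by minimality each composition factor has highest weight exactly $\boi^+(\beta)^n$. The elements of $(\mathcal U^*_{\mathcal A})_{n\beta}$ with maximal word $\preceq \boi^+(\beta)^n$ form the rank-one $\mathcal A$-submodule $\mathcal A\,b^*_{\boi^+(\beta)^n}$, and together with the injectivity of $\mathrm{ch}^-_q$ (Proposition \ref{injective}) this forces irreducibility, with no appeal to a prior classification of simples.
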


\begin{proof} 
Recall that since $\boi^+(\beta)\in \mathcal L^+$ we have $b^*_{\boi^+(\beta)}=E^*_{\boi^+(\beta)}.$ By Corollary \ref{cor-alpha} we have $\mathrm{ch}^-_q(L_{\beta})=E^*_{\boi^+(\beta)}.$ From Proposition \ref{cor-char}, Lemma \ref{eqn-Ei*} and Corollary \ref{cor-b*word power}, it follows that 
\begin{align*}
\mathrm{ch}^-_q(L_{\beta}^{\circ n})&=(\mathrm{ch}^-_q(L_{\beta}))^{\diamond n} \\
&=({E^*_{\boi^+(\beta)}})^{\diamond n}\\
&=(-1)^{\xi(\boi^+(\beta)^n)}q^{-s(\boi^+(\beta)^n)}E^*_{{\boi^+(\beta)}^n}=(-1)^{\xi(\boi^+(\beta)^n)}q^{-s(\boi^+(\beta)^n)} b^*_{{\boi^+(\beta)}^n}.
\end{align*}
Hence all composition factors of $L_\beta^{\circ n}$ have
highest weight $\boi^+(\beta)^n$. Recall that the map $\mathrm{ch}^-_q$ is injective by Proposition \ref{injective}. Since $\{b^*_{\bi}\mid |\bi|=n\beta\}$ is a basis of  $(\mathcal{U}^*_{\mathcal A})_{n\beta}$ and $\boi^+(\beta)^n$ is the smallest dominant word in $\mathcal W^+_{n\beta},$ the representation $L_\beta^{\circ n}$ is irreducible.
\end{proof}

Consider $\mathbf i \in \mathcal W_\alpha^+$ and write it in the form of the canonical factorization $\mathbf i = \mathbf i_1^{n_1} \cdots \mathbf i_d^{n_d}$. Let $\beta_k =| \mathbf i_k|$ for $k=1, ... , d$, and define the {\em standard module} $\Delta (\mathbf i)$ of highest weight $\mathbf i  \in \mathcal W_\alpha^+$ over the algebra $\mathcal H^-(\alpha)$ by
\[  \Delta (\mathbf i):= \Pi^{\xi(\bi)}(L_{\beta_1}^{\circ n_1} \circ L_{\beta_2}^{\circ n_2} \circ \cdots \circ L_{\beta_d}^{\circ n_d} )  \{ s(\mathbf i) \}  \] where $\xi(\bi)$ and $s(\bi)$ are defined in (\ref{eq-xi-s}).

\begin{Lem} \label{lem-std}
Let $\mathbf i \in \mathcal W^+$. Then the highest weight of $\Delta(\mathbf i)$ is $\mathbf i$, and $\dim^-_q ( \mathbf i \Delta(\mathbf i))=\kappa_{\mathbf i}$. 
\end{Lem}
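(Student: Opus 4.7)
The plan is to compute $\mathrm{ch}^-_q(\Delta(\bi))$ explicitly and show it equals $E^*_\bi$; both assertions of the lemma then follow by reading off data from $E^*_\bi$. I would proceed as follows.

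First, I would unwind the functors $\Pi^{\xi(\bi)}$ and the shift $\{s(\bi)\}$: these contribute a global factor $(-1)^{\xi(\bi)} q^{s(\bi)}$ to the graded super-character. Then, applying Proposition \ref{cor-char} iteratively to the composition $L_{\beta_1}^{\circ n_1}\circ \cdots \circ L_{\beta_d}^{\circ n_d}$ (with a small bookkeeping step to handle the reversal inherent in the shuffle product), I obtain
\[
\mathrm{ch}^-_q\bigl(L_{\beta_1}^{\circ n_1}\circ\cdots\circ L_{\beta_d}^{\circ n_d}\bigr)
=\mathrm{ch}^-_q(L_{\beta_d})^{\diamond n_d}\diamond\cdots\diamond \mathrm{ch}^-_q(L_{\beta_1})^{\diamond n_1}.
\]
By Corollary \ref{cor-alpha}, each factor $\mathrm{ch}^-_q(L_{\beta_k})$ equals $E^*_{\bi_k}$, so after multiplying by the overall $(-1)^{\xi(\bi)} q^{s(\bi)}$ and invoking Lemma \ref{eqn-Ei*}, the character simplifies to
\[
\mathrm{ch}^-_q(\Delta(\bi))=(-1)^{\xi(\bi)}q^{s(\bi)}(E^*_{\bi_d})^{\diamond n_d}\diamond\cdots\diamond (E^*_{\bi_1})^{\diamond n_1}=E^*_{\bi}.
\]

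With this identity in hand, the first assertion is immediate: the highest weight of $\Delta(\bi)$ is $\max(E^*_{\bi})$, and by Theorem \ref{thm-canonical basis} combined with the triangular relation $b^*_{\bi}=E^*_{\bi}+\sum_{\bj\prec\bi}\gamma_{\bi\bj}E^*_{\bj}$, the maximal word of $E^*_{\bi}$ is $\bi$ itself. For the second assertion, the coefficient of $\bi$ in $\mathrm{ch}^-_q(\Delta(\bi))$ equals by definition $\dim^-_q(\bi\Delta(\bi))$, so the claim reduces to showing that the coefficient of $\bi$ in $E^*_{\bi}$ is $\kappa_{\bi}$. Since $\max(E^*_{\bj})\preceq \bj\prec\bi$ for $\bj\prec \bi$, the triangular expansion of $b^*_{\bi}$ in PBW elements contributes nothing to the $\bi$-coefficient beyond $E^*_{\bi}$. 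Hence the $\bi$-coefficient of $E^*_{\bi}$ agrees with that of $b^*_{\bi}$, which is $\kappa_{\bi}$ by Lemma \ref{lem-coeff}.

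I do not anticipate a serious obstacle, as the work has essentially been done in Lemma \ref{lem-power}, Lemma \ref{eqn-Ei*}, and Lemma \ref{lem-coeff}. The only thing requiring care is the correct accounting of the parity shifts and grading shifts when passing from the iterated induction product to its graded super-character, so that the sign $(-1)^{\xi(\bi)}$ and the power $q^{s(\bi)}$ combine precisely with the output of Lemma \ref{eqn-Ei*} to yield $E^*_{\bi}$ on the nose.
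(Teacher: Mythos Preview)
Your proposal is correct and follows essentially the same approach as the paper: compute $\mathrm{ch}^-_q(\Delta(\bi))=(-1)^{\xi(\bi)}q^{s(\bi)}(E^*_{\bi_d})^{\diamond n_d}\diamond\cdots\diamond(E^*_{\bi_1})^{\diamond n_1}=E^*_{\bi}$ via Proposition~\ref{cor-char}, Corollary~\ref{cor-alpha}, and Lemma~\ref{eqn-Ei*}, then read off the highest weight and the $\bi$-coefficient using Lemma~\ref{lem-coeff}. The only cosmetic difference is that you spell out the triangular-expansion argument for why the $\bi$-coefficient of $E^*_{\bi}$ equals $\kappa_{\bi}$, whereas the paper absorbs this step into the citation of Lemma~\ref{lem-coeff} (whose proof already contains that argument).
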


\begin{proof}
It is easy to see that  $ \xi(\mathbf i)= \sum_{k=1}^d \xi({\boi^+(\beta_k)}^{n_k})$ and $s(\mathbf i)= \sum_{k=1}^d s({\boi^+(\beta_k)}^{n_k})$. It follows  from  Proposition \ref{cor-char} and Lemma \ref{eqn-Ei*} that 
\begin{align*}
\mathrm{ch}^-_q(\Delta(\mathbf i))&=(-1)^{\xi(\mathbf i)}q^{s(\mathbf i)}\mathrm{ch}^-_q(L_{\beta_d})^{\diamond n_d}\diamond \cdots \diamond \mathrm{ch}^-_q(L_{\beta_1})^{\diamond n_1}\\
&=(-1)^{\xi(\mathbf i)}q^{s(\mathbf i)}(E^*_{\boi^+(\beta_d)})^{\diamond n_d}\diamond \cdots \diamond (E^*_{\boi^+(\beta_1)})^{\diamond n_1}\\
&=E^*_{\mathbf i}
\end{align*}
Hence the highest weight of $\Delta(\bi)$ is $\max(E^*_{\bi})=\bi$  and $\dim^-_q ( \mathbf i \Delta(\mathbf i))=\kappa_{\mathbf i}$ by  Lemma \ref{lem-coeff}.
\end{proof}

For $\mu, \nu \in Q^+$,  we will write $\mathrm{Hom}_{\mu+\nu}$ for $\mathrm{Hom}_{\mathcal{H}^-(\mu) \otimes \mathcal{H}^-(\nu)}$ and recall that we write $\mathrm{Hom}_\nu$ for $\mathrm{Hom}_{\mathcal H^-(\nu)}$. For $M\in \mathrm{Mod}^-(\mu) \otimes \mathrm{Mod}^-(\nu)$  and  $N \in \mathrm{Mod}^-(\mu+\nu)$, we have the Frobenius reciprocity:
\[ \mathrm{Hom}_{\mu+\nu}( \mathrm{Ind}^{\mu+\nu}_{\mu, \nu} M, N) \cong \mathrm{Hom}_{\mu,\nu}(M, \mathrm{Res}^{\mu+\nu}_{\mu,\nu}N). \]

\begin{Prop} \label{construction}
Let $\mathbf i \in \mathcal W^+_\alpha$, $\alpha \in Q^+$. Then the standard module $\Delta (\mathbf i)$ has an irreducible head, which will be denoted by $L(\mathbf i)$, and the highest weight of $L(\mathbf i)$ is $\mathbf i$. 
\end{Prop}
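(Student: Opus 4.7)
The strategy follows the classical template of Kleshchev--Ram for finite-type KLR algebras. First, I would note that $\Delta(\mathbf i)$ is finite dimensional and hence admits at least one irreducible quotient $L$. Since $\mathrm{ch}^-_q\Delta(\mathbf i)=E^*_{\mathbf i}$ by Lemma~\ref{lem-std} and $\max(E^*_{\mathbf i})=\mathbf i$ by Theorem~\ref{thm-canonical basis}, every simple quotient $L$ satisfies $\max(\mathrm{ch}^-_q L)\preceq\mathbf i$. The task is to sharpen this to equality and to establish uniqueness of the simple quotient.

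Next, I would invoke Frobenius reciprocity: for any simple $L\in\mathrm{Rep}^-(\alpha)$,
\[\mathrm{HOM}^-_{\alpha}(\Delta(\mathbf i),L)\cong \mathrm{HOM}^-_{\beta_1n_1,\dots,\beta_dn_d}\bigl(L_{\beta_1}^{\circ n_1}\boxtimes\cdots\boxtimes L_{\beta_d}^{\circ n_d},\ \mathrm{Res}^{\alpha}_{\beta_1n_1,\dots,\beta_dn_d} L\bigr)\]
after absorbing the grading and parity shifts $\Pi^{\xi(\mathbf i)}\{s(\mathbf i)\}$. By Lemma~\ref{lem-power}, each $L_{\beta_k}^{\circ n_k}$ is simple with highest weight $\mathbf i_k^{n_k}$, so the outer tensor product on the left is a simple module with highest weight $(\mathbf i_1^{n_1},\dots,\mathbf i_d^{n_d})$. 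Therefore the hom space is governed by the multiplicity of this simple inside $\mathrm{Res} L$, which via the shuffle coproduct (dual to Proposition~\ref{cor-char}) is read off from the coefficient of $\mathbf i=\mathbf i_1^{n_1}\cdots\mathbf i_d^{n_d}$ in $\mathrm{ch}^-_q L$, together with the dimension of the highest-weight space of the outer tensor product.

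Combining these steps, if $\max(\mathrm{ch}^-_q L)\prec\mathbf i$ then $\mathbf i$ does not appear in $\mathrm{ch}^-_q L$, so $\mathrm{HOM}^-_{\alpha}(\Delta(\mathbf i),L)=0$; consequently every simple quotient of $\Delta(\mathbf i)$ has highest weight exactly $\mathbf i$. To conclude irreducibility of the head, it suffices to show that for any simple $L$ with $\max(\mathrm{ch}^-_q L)=\mathbf i$ the above hom space is one-dimensional. The coefficient of $\mathbf i$ in $\mathrm{ch}^-_q L$ should equal $\prod_k\kappa_{\mathbf i_k}^{n_k}$ (a ``reduced'' version of Lemma~\ref{lem-coeff}, with the factor $\prod[n_k]_{\mathbf i_k}!$ absent because it accounts precisely for the dimension of the $(\mathbf i_1^{n_1},\dots,\mathbf i_d^{n_d})$-weight space of the outer tensor product), and dividing the two quantities yields a single graded line of homomorphisms. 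If $L\not\cong L'$ were two non-isomorphic simple quotients, then $\mathrm{Hom}(\Delta(\mathbf i),L\oplus L')$ would be at least two-dimensional, a contradiction; hence $\Delta(\mathbf i)$ has a unique irreducible head $L(\mathbf i)$ with highest weight $\mathbf i$.

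The main obstacle is the multiplicity bookkeeping: showing that the coefficient of $\mathbf i$ in $\mathrm{ch}^-_q L$ truly factors as $\prod_k\kappa_{\mathbf i_k}^{n_k}$ for \emph{every} simple $L$ of highest weight $\mathbf i$, and that the subsequent quotient is exactly one once the grading and parity shifts $\Pi^{\xi(\mathbf i)}\{s(\mathbf i)\}$ are taken into account. This is precisely the super/spin analogue of the computation in \cite{KR}, and is the step where the careful choice of the shifts in the definition of $\Delta(\mathbf i)$ (designed so that $\mathrm{ch}^-_q\Delta(\mathbf i)=E^*_{\mathbf i}$) is essential; the signs and $q$-powers required are exactly those tracked in Lemmas~\ref{lem-N(nu)}, \ref{lem-top coeff special} and Corollary~\ref{cor-top coefficient}.
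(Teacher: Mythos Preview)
Your overall strategy---Frobenius reciprocity plus the irreducibility of the outer tensor product from Lemma~\ref{lem-power}---is exactly the paper's. The divergence, and the gap, is in your final multiplicity step.

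You try to show that $\mathrm{Hom}(\Delta(\mathbf i),L)$ is one-dimensional by computing the coefficient of $\mathbf i$ in $\mathrm{ch}^-_q L$ for an \emph{arbitrary} simple $L$ of highest weight $\mathbf i$, claiming it equals $\prod_k\kappa_{\mathbf i_k}^{n_k}$. But nothing proved so far gives you control over the character of an unknown simple $L$; Lemma~\ref{lem-coeff} is a statement about $b^*_{\mathbf i}$ and $E^*_{\mathbf i}$, not about $\mathrm{ch}^-_q L$. At this stage we do not yet know that $\mathrm{ch}^-_q L$ is (a scalar multiple of) $b^*_{\mathbf i}$---that is essentially what we are trying to establish. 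Moreover, even granting one-dimensionality for each individual $L$, your concluding sentence does not rule out two \emph{non-isomorphic} simple quotients; a two-dimensional $\mathrm{Hom}(\Delta(\mathbf i),L\oplus L')$ contradicts nothing you have proved.

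The paper sidesteps all of this with a single observation you have the ingredients for but do not use: by Lemma~\ref{lem-std}, the $\mathbf i$-weight multiplicity of the outer tensor product $\Pi^{\xi(\mathbf i)}(L_{\beta_1}^{\circ n_1}\boxtimes\cdots\boxtimes L_{\beta_d}^{\circ n_d})\{s(\mathbf i)\}$ already equals the $\mathbf i$-weight multiplicity of $\Delta(\mathbf i)$ itself (induction creates no new copies of the maximal word $\mathbf i$). Since the outer tensor product is simple and, by Frobenius reciprocity, embeds into every simple quotient $L$, each such $L$ carries the \emph{entire} $\mathbf i$-weight space of $\Delta(\mathbf i)$. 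If the head had two summands they would together force the $\mathbf i$-weight multiplicity to exceed that of $\Delta(\mathbf i)$, which is impossible. No information about $\mathrm{ch}^-_q L$ beyond ``it is a subquotient of $\mathrm{ch}^-_q\Delta(\mathbf i)$'' is needed.
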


\begin{proof}

Let $L \in \text{Rep}(\alpha)$ be irreducible. If $L$ is a component of the head of $\Delta (\mathbf i)$, then $\mathrm{Hom}_\alpha(\Delta(\mathbf i), L)$ is nonzero and equal to 
\[ \mathrm{Hom}_{n_1 \beta_1, \dots, n_d\beta_d}(\Pi^{\xi(\bi)}(L_{\beta_1}^{\circ n_1} \boxtimes L_{\beta_2}^{\circ n_2} \boxtimes \cdots \boxtimes L_{\beta_d}^{\circ n_d} )  \{ s(\mathbf i) \}, \mathrm{Res}^\alpha_{n_1\beta_1, \dots , n_d\beta_d}L) \]
by the Frobenius reciprocity.
By Lemma \ref{lem-power}, the $\mathcal H^-(n_1 \beta_1) \otimes \cdots \otimes \mathcal H^-(n_d \beta_d)$-module $\Pi^{\xi(\bi)}(L_{\beta_1}^{\circ n_1} \boxtimes L_{\beta_2}^{\circ n_2} \boxtimes \cdots \boxtimes L_{\beta_d}^{\circ n_d} )  \{ s(\mathbf i) \}$ is irreducible and embeds into $L$. It follows from Lemma \ref{lem-std} that the multiplicity of the weight $\mathbf i$ in  $\Pi^{\xi(\bi)}(L_{\beta_1}^{\circ n_1} \boxtimes L_{\beta_2}^{\circ n_2} \boxtimes \cdots \boxtimes L_{\beta_d}^{\circ n_d} )  \{ s(\mathbf i) \}$ is equal to that of the weight $\mathbf i$ in $\Delta(\mathbf i)$. 
Thus the head of $\Delta (\mathbf i)$ is irreducible.
\end{proof}

Now we state and prove the main result of this paper.

\begin{Thm} \label{thm-main}
Let $\alpha \in Q^+$. Then the set $\{L(\mathbf i) \, | \, \mathbf i \in \mathcal W^+_\alpha \}$ is a complete and irredundant set of irreducible graded $\mathcal H^-(\alpha)$-modules up to isomorphism and degree shift.
\end{Thm}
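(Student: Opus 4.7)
The plan is to split the statement into two claims: \emph{distinctness} of the $L(\bi)$ modulo the shift functors $q$ and $\Pi$, and \emph{completeness} (every simple $\mathcal H^-(\alpha)$-module is such an $L(\bi)$ up to shifts). Irreducibility of each $L(\bi)$ is already supplied by Proposition \ref{construction}.

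For distinctness I would compare leading characters. If $L(\bi) \cong \Pi^\epsilon L(\bj)\{m\}$ for some $\bi,\bj\in\mathcal W^+_\alpha$, $\epsilon\in\mathbb Z/2\mathbb Z$ and $m\in\mathbb Z$, then applying $\mathrm{ch}^-_q$ gives
\[
\mathrm{ch}^-_q L(\bi) \;=\; (-1)^{\epsilon} q^{m}\,\mathrm{ch}^-_q L(\bj).
\]
By Proposition \ref{construction} the $\prec$-maximal word in $\mathrm{ch}^-_q L(\bi)$ is $\bi$ and in $\mathrm{ch}^-_q L(\bj)$ is $\bj$; comparing leading words forces $\bi=\bj$, and comparing the nonzero leading coefficients forces $\epsilon=\bar 0$ and $m=0$.

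For completeness I would run a rank argument via Corollary \ref{cor-iso}: the map $\gamma^*$ is an $\mathcal A$-module isomorphism $[\mathrm{Rep}^-(\alpha)] \xrightarrow{\sim} (\mathcal U^*_{\mathcal A})_\alpha$, and the target is a free $\mathcal A$-module of rank $|\mathcal W^+_\alpha|$ with basis $\{E^*_{\bi}\mid\bi\in\mathcal W^+_\alpha\}$. Since the functors $q$ and $\Pi$ realize the $\mathcal A$-action on $[\mathrm{Rep}^-(\alpha)]$, Jordan--H\"older implies that a choice of one representative from each shift-orbit of simples provides an $\mathcal A$-basis of the Grothendieck group. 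Hence there are exactly $|\mathcal W^+_\alpha|$ shift-orbits of simples, and by distinctness the family $\{L(\bi)\mid\bi\in\mathcal W^+_\alpha\}$ exhausts them.

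The completeness step is the real obstacle. Distinctness is essentially reading off leading terms once Proposition \ref{construction} is in hand, but completeness depends on Corollary \ref{cor-iso} together with the (standard but grading/parity-sensitive) identification of the Grothendieck group rank with the number of simples modulo shifts. As a more concrete alternative, one can establish the unitriangular expansion
\[
[\Delta(\bi)] = [L(\bi)] + \sum_{\bj\prec\bi} c_{\bi\bj}\,[L(\bj)], \qquad c_{\bi\bj}\in\mathcal A,
\]
using the fact (proved as in Proposition \ref{construction} by Frobenius reciprocity and Lemma \ref{lem-std}) that every composition factor of $\Delta(\bi)$ other than its head has highest word strictly $\prec\bi$; Lemma \ref{lem-std} identifies $[\Delta(\bi)]$ with $E^*_{\bi}$ under $\gamma^*$, so $\{[\Delta(\bi)]\}$ is an $\mathcal A$-basis and the unitriangular change of basis above then shows $\{[L(\bi)]\}$ is one as well, yielding completeness without a separate appeal to the abstract rank comparison.
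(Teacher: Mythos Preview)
Your main argument is correct and is exactly the paper's proof: irreducibility from Proposition~\ref{construction}, irredundancy by comparing highest weights, and completeness from the rank of $(\mathcal U^*_{\mathcal A})_\alpha$ via Corollary~\ref{cor-iso}. One caveat on your alternative: writing the lower composition factors of $\Delta(\bi)$ as $L(\bj)$ with $\bj\prec\bi$ already presupposes what you are proving, so that route must be run as an induction on $\bi\in\mathcal W^+_\alpha$ (using that the highest weight of any simple lies in $\mathcal W^+_\alpha$, which itself comes from Corollary~\ref{cor-iso}), or else it collapses back to the same rank comparison you gave first.
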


\begin{proof}
By Proposition \ref{construction}, we have constructed an irreducible module $L(\mathbf i)$ for each $\mathbf i \in \mathcal W^+_\alpha$. Furthermore, since the highest weights are different, we have $L(\mathbf i) \not\cong L(\mathbf j)$ for $\mathbf i \neq \mathbf j$. We have the basis $B^*= \{ b^*_{\mathbf i} \,|\, \mathbf i \in \mathcal W^+ \}$ for $\mathcal U^*_{\mathcal A}$, and a basis of the weight space $(\mathcal U^*_{\mathcal A})_\alpha$ is given by $\{ b^*_{\mathbf i} \in B^* \,|\, \mathbf i \in \mathcal W^+_\alpha \}$. Now the assertion of the theorem follows from Corollary \ref{cor-iso}.
\end{proof}

% 
%================ 
% 

\bibliographystyle{amsplain}
\bibliography{KLR}

\end{document}